\newcommand{\subsetextr}{\widetilde{\TripleNormSphere}}
\renewcommand{\bleue}[1]{#1}
\title{Orthant-Strictly Monotonic Norms,\\
  Generalized Top-$k$ and $k$-Support Norms\\
  and the $\ell_0$ Pseudonorm}
\author{Jean-Philippe Chancelier 
  and Michel De Lara\footnote{michel.delara@enpc.fr}
  \\ CERMICS, Ecole des Ponts, Marne-la-Vall\'ee, France}
\begin{document}

\maketitle

\begin{abstract}
  The so-called $\ell_0$~pseudonorm on the Euclidean space~\( \mathbb{R}^d \)
  counts the number of nonzero components of a vector.
  We say that a sequence of norms is
  strictly increasingly graded (with respect to the $\ell_0$~pseudonorm)
  if it is nondecreasing and that the sequence of norms of a vector~$x$  
  becomes stationary exactly at the index~$\ell_0(x)$. 
  In this paper, 
  with any (source) norm, we associate sequences of 
  generalized top-$k$ and $k$-support norms,
  and we also introduce the new class of orthant-strictly monotonic norms
  (that encompasses the $\ell_p$ norms, but for the extreme ones).
  Then, we show that an 
  orthant-strictly monotonic source norm
  generates a sequence of generalized top-$k$ norms
  which is strictly increasingly graded.
  With this, we provide a systematic way to generate 
  sequences of norms with which the level sets of the $\ell_0$~pseudonorm
  are expressed by means of the difference of two norms. 
  Our results rely on the study of orthant-strictly monotonic norms.
\end{abstract}

{{\bf Key words}: $\ell_0$~pseudonorm, orthant-strictly monotonic norm,
  generalized top-$k$ norm, generalized $k$-support norm, 
  strictly graded sequence of norms.}

{{\bf AMS classification}: 15A60, 46N10 }



\section{Introduction}

The \emph{counting function}, also called \emph{cardinality function}
or \emph{\lzeropseudonorm}, 
counts the number of nonzero components of a vector in~$\RR^d$.
The \lzeropseudonorm\ shares three out of the four axioms of a norm ---
nonnegativity, positivity except for \( \primal =0 \), subadditivity ---
but the \lzeropseudonorm\ is 0-homogeneous (hence the axiom of 1-homogeneity
does not hold true).
The \lzeropseudonorm\ is used in sparse optimization, either as criterion or in the
constraints, to obtain solutions with few nonzero entries.
The \lzeropseudonorm\ is nonconvex, 
but it has been established that its level sets 
can be expressed by means of the difference between two 
convex functions, more precisely two norms,
taken from the nondecreasing sequence of so-called
top-$k$ norms (see~\cite{Tono-Takeda-Gotoh:2017}
and references therein).
In this paper, we generalize this kind of result to a large class
of sequences of norms by introducing three concepts and by relating them to the \lzeropseudonorm.

First, we define
sequences of generalized top-$k$ and $k$-support norms,
associated with any (source) norm on~$\RR^d$.
This extends already known concepts of top-$k$ and $k$-support norms
\cite{Argyriou-Foygel-Srebro:2012,Obozinski-Bach:hal-01412385}. 
Second, we introduce a new class of 
orthant-strictly monotonic norms on~$\RR^d$.
We rely on the notion of orthant-monotonic norm\footnote{%
  It is proved in~\cite[Lemma~2.12]{Gries:1967} that a norm is orthant-monotonic if and only if it is
  monotonic in every orthant, hence the name.}
introduced and studied in~\cite{Gries:1967,Gries-Stoer:1967}
with further developments in~\cite{Marques_de_Sa-Sodupe:1993}.
With such an orthant-strictly monotonic norm, when one component of a vector moves away from zero,
the norm of the vector strictly grows. 
Thus, an orthant-strictly monotonic norm is sensitive to the support 
of a vector, like the \lzeropseudonorm.
We study this class of norms, using the notions of
dual vector pair for a norm
\cite{Gries:1967,Gries-Stoer:1967,Marques_de_Sa-Sodupe:1993}
(refered to as polar alignment in~\cite{Fan-Jeong-Sun-Friedlander:2020}),
and of Birkhoff orthogonality~\cite{Birkhoff:1935},
and strict Birkhoff orthogonality~\cite{Sain-Paul-Jha:2015}. 
Third, we define sequences of norms that are
strictly increasingly graded (with respect to the $\ell_0$~pseudonorm):
the sequence of norms of a vector~$\primal$  
is nondecreasing and becomes stationary exactly at the index~$\lzero(\primal)$. 
Thus equipped, we show  why and how these three concepts
prove especially relevant for the \lzeropseudonorm.

\bleue{%
  This paper has some parts in common with the paper
\cite{Chancelier-DeLara:2021_SVVA}.
Indeed, the paper \cite{Chancelier-DeLara:2021_SVVA} 
built upon \cite{Chancelier-DeLara:2021_ECAPRA_JCA,Chancelier-DeLara:2022_CAPRA_OPTIMIZATION}
to prove hidden convexity of any nondecreasing function of the \lzeropseudonorm,
using conjugacies based on a class of norms that were not considered in
\cite{Chancelier-DeLara:2021_ECAPRA_JCA,Chancelier-DeLara:2022_CAPRA_OPTIMIZATION},
the orthant-strictly monotonic norms.
This is why, we needed specific results on orthant-strictly monotonic norms,
and provided them\footnote{%
  More precisely, \cite[Proposition~12]{Chancelier-DeLara:2021_SVVA}
corresponds to Item~\ref{it:ICS} in Proposition~\ref{pr:orthant-monotonic},
\cite[Proposition~13]{Chancelier-DeLara:2021_SVVA}
corresponds to Item~\ref{it:SICS} and Item~\ref{it:SDC}
in Proposition~\ref{pr:orthant-strictly_monotonic},
\cite[Proposition~15]{Chancelier-DeLara:2021_SVVA}
corresponds to the second Item in Proposition~\ref{pr:increasingly_graded}.}
in \cite[Appendix~2]{Chancelier-DeLara:2021_SVVA}.
However, the current paper deals with different issues.
Indeed, we focus here on a thorough characterization of
orthant and orthant-strictly monotonic norms, 
and on the properties of derived sequences of norms.
The only connection with the \lzeropseudonorm\ is in the notion
of (strictly) increasingly graded norms and how this allows to express 
the level sets of the \lzeropseudonorm\ 
by means of the difference between two norms.
This last question was not treated in  \cite{Chancelier-DeLara:2021_SVVA}.
}

The paper is organized as follows.
In Sect.~\ref{Orthant-monotonic_and_orthant-strictly_monotonic_norms}
we introduce a new class of 
orthant-strictly monotonic norms on~$\RR^d$,
for which we provide different characterizations.
In Sect.~\ref{Generalized_top-k_and_k-support_norms}, 
we define sequences of generalized top-$k$ and $k$-support norms,
generated from a source norm, and we study their properties, be they general or
under orthant-monotonicity.
Finally, in
Sect.~\ref{The_lzeropseudonorm_orthant-monotonicity_and_generalized_top-k_and_k-support_norms}
we introduce the notion of sequences of norms
that are (strictly) increasingly graded with respect to the \lzeropseudonorm.
We show that an orthant-strictly monotonic source norm 
generates a sequence of generalized top-$k$ norms
which is strictly increasingly graded with respect to the \lzeropseudonorm.
We also study the  sequence of generalized $k$-support norms.
In conclusion, we hint at possible applications in sparse optimization.

\section{Orthant-monotonic and orthant-strictly monotonic norms}
\label{Orthant-monotonic_and_orthant-strictly_monotonic_norms}

In~\S\ref{Background_on_norms}, we recall well-known definitions for norms.
In~\S\ref{Orthant-monotonic_norms}, we provide new characterizations of orthant-monotonic norms.
Then, in~\S\ref{Orthant-strictly_monotonic_norms},
we introduce the new notion of orthant-strictly monotonic norm, 
and we provide characterizations, as well as properties,
that will prove especially relevant for the \lzeropseudonorm.
\medskip

\subsection{Background on norms}
\label{Background_on_norms}

We work on the Euclidean space~$\RR^d$
(where~$d$ is a nonzero integer), equipped with the scalar product 
\( \proscal{\cdot}{\cdot} \) (but not necessarily with the Euclidean norm).
Thus, all norms define the same (Borel) topology.
We use the notation \( \ic{j,k}=\na{j, j+1,\ldots,k-1,k} \) for any pair of
integers such that \( j \leq k \). 
For any vector~\( \primal \in \RR^d \), we define its \emph{support} by 
\begin{equation}
  \Support{\primal} = \bset{ j \in \ic{1,d} }%
  {\primal_j \not= 0 } \subset \ic{1,d}
  \eqfinp
  \label{eq:support_of_a_vector}
\end{equation}
For any norm~$\TripleNorm{\cdot}$ on~$\RR^d$,
we denote the unit sphere and the unit ball 
of the norm~$\TripleNorm{\cdot}$ by 
\begin{subequations}\label{eq:triplenorm_unit_sphere_ball}
  \begin{align}
    \TripleNormSphere
    &  = 
      \bset{\primal \in \RR^d}{\TripleNorm{\primal} = 1} 
      \eqfinv
      \label{eq:triplenorm_unit_sphere}
    \\
    \TripleNormBall 
    &  = 
      \bset{\primal \in \RR^d}{\TripleNorm{\primal} \leq 1} 
      \eqfinp
      \label{eq:triplenorm_unit_ball}
  \end{align}
\end{subequations}

\subsubsection*{Dual norms}

We recall that the following expression 
\begin{equation}
  \TripleNorm{\dual}_\star = 
  \sup_{ \TripleNorm{\primal} \leq 1 } \proscal{\primal}{\dual} 
  \eqsepv \forall \dual \in \RR^d
  \label{eq:dual_norm}
\end{equation}
defines a norm on~$\RR^d$, 
called the \emph{dual norm} \( \TripleNormDual{\cdot} \)
\cite[Definition~6.7]{Aliprantis-Border:1999}.
In the sequel, we will occasionally consider the $\ell_p$-norms~$\norm{\cdot}_{p}$ on the space~\( \RR^d \),
defined by \( \norm{\primal}_{p} = \bp{\sum_{i=1}^d |\primal_i|^p}^{\frac{1}{p}}\) 
for $p\in [1,\infty[$, and by
\(\norm{\primal}_{\infty} = \sup_{i\in\ic{1,d}} |\primal_i|\).
It is well-known that the dual norm of the norm~$\norm{\cdot}_{p}$
is the $\ell_q$-norm~$\norm{\cdot}_{q}$, where $q$ is such that \(1/p + 1/q=1\) 
(with the extreme cases $q=\infty$ when $p=1$, and $q=1$ when $p=\infty$). 

We denote the unit sphere and the unit ball 
of the dual norm~$\TripleNormDual{\cdot}$ by 
\begin{subequations}
  \begin{align}
    \TripleNormDualSphere
    &  = 
      \defset{\dual \in \RR^d}{\TripleNormDual{\dual} = 1} 
      \eqfinv
      \label{eq:triplenorm_Dual_unit_sphere}
    \\
    \TripleNormDualBall
    &  = 
      \defset{\dual \in \RR^d}{\TripleNormDual{\dual} \leq 1} 
      \eqfinp
      \label{eq:triplenorm_Dual_unit_ball}
  \end{align}
\end{subequations}
For any subset \( \Primal\subset\RR^d \),
\( \sigma_{\Primal} : \RR^d \to [-\infty,+\infty] \) denotes the 
\emph{support function of the subset~$\Primal$}:
\begin{equation}
  \sigma_{\Primal}\np{\dual} = 
  \sup_{\primal\in\Primal} \proscal{\primal}{\dual}
  \eqsepv \forall \dual \in \RR^d
  \eqfinp
  \label{eq:support_function}
\end{equation}
\begin{subequations}
  It is easily established that 
  \begin{equation}
    \TripleNorm{\cdot} = \sigma_{\TripleNormDualBall} = \sigma_{\TripleNormDualSphere} 
    \mtext{ and } 
    \TripleNormDual{\cdot} = \sigma_{\TripleNormBall} = \sigma_{\TripleNormSphere}
    \eqfinv
    \label{eq:norm_dual_norm}
  \end{equation}
  where \( \TripleNormDualBall \), the unit ball of the dual norm,
  is the polar set~\( \TripleNormBall^{\odot} \) 
  of the unit ball~\( \TripleNormBall \):
  \begin{equation}
    \TripleNormDualBall
    =\TripleNormBall^{\odot} 
    = \defset{\dual \in \RR^d}{\proscal{\primal}{\dual} \leq 1 
      \eqsepv \forall \primal \in \TripleNormBall } 
    \eqfinp
    \label{eq:norm_dual_norm_unit_ball}
  \end{equation}
  Since the set \( \TripleNormBall \) is closed, convex and contains~$0$,
  we have~\cite[Theorem~5.103]{Aliprantis-Border:1999}
  \begin{equation}
    \TripleNormBall^{\odot\odot} = \bp{ \TripleNormBall^{\odot} }^{\odot} 
    = \TripleNormBall 
    \eqfinv 
    \label{eq:bipolarball=ball}
  \end{equation}
  hence the \emph{bidual norm} 
  \( \TripleNorm{\cdot}_{\star\star}= \bp{ \TripleNorm{\cdot}_\star }_\star \)
  is the original norm:
  \begin{equation}
    \TripleNorm{\cdot}_{\star\star}= \bp{ \TripleNorm{\cdot}_\star }_\star 
    = \TripleNorm{\cdot} \eqfinp
    \label{eq:bidual_norm=norm}
  \end{equation}
\end{subequations}

\subsubsection*{\( \TripleNorm{\cdot} \)-duality}

By construction of the dual norm in~\eqref{eq:dual_norm}, we have 
the inequality
\begin{subequations}
  \begin{equation}
    \proscal{\primal}{\dual} \leq 
    \TripleNorm{\primal} \times \TripleNormDual{\dual} 
    \eqsepv \forall  \np{\primal,\dual} \in \RR^d \times \RR^d 
    \eqfinp 
    \label{eq:norm_dual_norm_inequality}
  \end{equation}
  One says that \( \dual \in \RR^d \) is 
  \emph{\( \TripleNorm{\cdot} \)-dual} to \( \primal \in \RR^d \),
  denoted by \( \dual \parallel_{\TripleNorm{\cdot}} \primal \),
  if equality holds in Inequality~\eqref{eq:norm_dual_norm_inequality},
  that is,
  \begin{equation}
    \dual \parallel_{\TripleNorm{\cdot}} \primal
    \iff
    \proscal{\primal}{\dual} =
    \TripleNorm{\primal} \times \TripleNormDual{\dual} 
    \eqfinp 
    \label{eq:couple_TripleNorm-dual}
  \end{equation}
\end{subequations}
The terminology \( \TripleNorm{\cdot} \)-dual comes from
\cite[page~2]{Marques_de_Sa-Sodupe:1993}
(see also the vocable of \emph{dual vector pair} in~\cite[Equation~(1.11)]{Gries:1967}
and of \emph{dual vectors} in~\cite[p.~283]{Gries-Stoer:1967}, 
whereas it is refered as \emph{polar alignment}
in~\cite{Fan-Jeong-Sun-Friedlander:2020}).

We illustrate the \( \TripleNorm{\cdot} \)-duality
in the case of the $\ell_p$-norms~$\norm{\cdot}_{p}$, for $p\in [1,\infty]$.
The notation $\primal\circ\primal' =
\np{ \primal_1 \primal'_1,\ldots, \primal_d \primal'_d}$
is for the Hadamard (entrywise) product,
for any $\primal$, $\primal'$ in~$ \RR^{d}$.
For any \( \primal \in \RR^d \),
we denote by \( \sign{\primal}\in \{-1,0,1\}^d \)
the vector of~$ \RR^{d}$ with components 
the signs $\sign{\primal_i}\in \na{-1,0,1}$ 
of the entries~$\primal_i$, for $i\in\ic{1,d}$.
Let $\primal\in \RR^d\backslash\{0\}$ be a given vector 
(the case $\primal=0$ is trivial).
We easily obtain that a vector $\dual$ is
\begin{itemize}
\item 
  $\ell_2$-dual to $\primal$ iff (if and only if)
  there exists $\lambda \in \RR_{+}$ such that $\dual=\lambda \primal$;
\item 
  $\ell_p$-dual to $\primal$ for $p\in ]1,\infty[$ iff
  there exists $\lambda \in \RR_{+}$ such that
  $\dual=\lambda\mathrm{sign}(\primal)\circ \bp{|\primal_i|^{p/q}}_{i
    \in\ic{1,d}}$, where $q$ is such that \(1/p + 1/q=1\);
\item 
  $\ell_{1}$-dual to $\primal$ iff 
  the vectors $\dual$ and $\norm{\dual}_{\infty} \mathrm{sign}(\primal)$
  coincide on ${\Support{\primal}}$, the support of the vector~$\primal$ as defined in~\eqref{eq:support_of_a_vector};
\item 
  $\ell_{\infty}$-dual to $\primal$ iff
  \( \dual_j=0 \) for all \( j \in \argmax_{i\in \ic{1,d}} |\primal_i| \), and $\dual\circ\primal \ge 0$.
\end{itemize}

\subsubsection*{Restriction norms}

For any subset \( K \subset \ic{1,d} \), 
we denote by \( \RR^K \) the set of functions from~$K$ to~$\RR$ ---
which can be identified with~\( \RR^{\cardinal{K}} \), where $\cardinal{K}$ denotes the cardinality of 
 \( K \subset \ic{1,d} \)) --- and we introduce 
the subspace of~$\RR^d$ made of vectors
whose components vanish outside of~$K$ by\footnote{%
  Here, following notation from game theory, 
  we have denoted by $-K$ the complementary subset 
  of~$K$ in \( \ic{1,d} \): \( K \cup (-K) = \ic{1,d} \)
  and \( K \cap (-K) = \emptyset \).}
\begin{equation}
  \FlatRR_{K} = \RR^K \times \{0\}^{-K} =
  \bset{ \primal \in \RR^d }{ \primal_j=0 \eqsepv \forall j \not\in K } 
  \subset \RR^d 
  \eqfinv
  \label{eq:FlatRR}
\end{equation}
where \( \FlatRR_{\emptyset}= \na{0} \).
We denote by \( \pi_K : \RR^d \to \FlatRR_{K} \) 
the \emph{orthogonal projection mapping}
and, for any vector \( \primal \in \RR^d \), by 
\( \primal_K = \pi_K\np{\primal} \in \FlatRR_{K} \) 
the vector which coincides with~\( \primal \),
except for the components outside of~$K$ that are zero.
It is easily seen that the orthogonal projection mapping~$\pi_K$ 
is self-dual (equal to its dual operator), giving
\begin{equation}
  \proscal{\primal_K}{\dual_K} 
  = \proscal{\primal_K}{\dual} 
  = \bscal{\pi_K\np{\primal}}{\dual} 
  = \bscal{\primal}{\pi_K\np{\dual}}
  = \proscal{\primal}{\dual_K} 
  \eqsepv 
  \forall \primal \in \RR^d 
  \eqsepv 
  \forall \dual \in \RR^d 
  \eqfinp
  \label{eq:orthogonal_projection_self-dual}
\end{equation}


\begin{definition}
  For any norm~$\TripleNorm{\cdot}$ on~$\RR^d$
  and any subset \( K \subset\ic{1,d} \),
  we define three norms on the subspace~\( \FlatRR_{K} \) of~\( \RR^d \),
  as defined in~\eqref{eq:FlatRR}, as follows.
  \begin{itemize}
  \item 
    The \emph{$K$-restriction norm} \( \TripleNorm{\cdot}_{K} \)
    is the norm on~\( \FlatRR_{K} \) defined by   
    \begin{equation}
      \TripleNorm{\primal}_{K} = \TripleNorm{\primal} 
      \eqsepv
      \forall \primal \in \FlatRR_{K} 
      \eqfinp 
      \label{eq:K_norm}
    \end{equation}
  \item 
    The $\StarSet{K}$-norm
    \( \TripleNorm{\cdot}_{\star,K} \) is 
    the norm \( \bp{\TripleNorm{\cdot}_{\star}}_{K} \),
    given by the restriction to the subspace~\( \FlatRR_{K} \) of
    the dual norm~$\TripleNormDual{\cdot}$  (first dual, as recalled in
    definition~\eqref{eq:dual_norm} of a dual norm, then restriction),
  \item 
    The $\SetStar{K}$-norm
    \( \TripleNorm{\cdot}_{K,\star} \) is 
    the norm \( \bp{\TripleNorm{\cdot}_{K}}_{\star} \),
    given by the dual norm (on the subspace~\( \FlatRR_{K} \))
    of the $K$-restriction norm~\( \TripleNorm{\cdot}_{K} \) 
    to the subspace~\( \FlatRR_{K} \) (first restriction, then dual).
  \end{itemize}
  \label{de:K_norm}
\end{definition}

\bleue{%
It has been established 
(see \cite[Proposition~2.2]{Marques_de_Sa-Sodupe:1993})
that, for any nonempty subset \( K \subset \ic{1,d} \), 
  one has the inequality \( \TripleNorm{\cdot}_{K,\star} 
  \leq \TripleNorm{\cdot}_{\star,K} \).
  We will discuss the equality case in Proposition~\ref{pr:orthant-monotonic}. 
}

\subsection{New characterizations of orthant-monotonic norms}
\label{Orthant-monotonic_norms}

We recall the definitions of monotonic and of orthant-monotonic norms
before introducing, in the next \S\ref{Orthant-strictly_monotonic_norms},
the new notion of orthant-strictly monotonic norms.
For any \( \primal=\np{\primal_1,\ldots,\primal_d} \in \RR^d \), we denote \( \module{\primal}
=\np{\module{\primal_1},\ldots,\module{\primal_d}} \in \RR^d \). 

\begin{definition}
  \label{de:orthant-monotonic}
  A norm \( \TripleNorm{\cdot}\) on the space~\( \RR^d \) is called
  \begin{itemize}
  \item 
    \emph{monotonic}~\cite{Bauer-Stoer-Witzgall:1961}
    if, for all 
    $\primal$, $\primal'$ in~$ \RR^{d}$, we have 
    \(
    |\primal| \le |\primal'| \Rightarrow 
    \TripleNorm{\primal} \le \TripleNorm{\primal'}
    \),
    where $|\primal| \leq |\primal'|$ means 
    $|\primal_i| \leq |\primal^{'}_i|$ for all $i\in\ic{1,d}$, 
  \item 
    \emph{orthant-monotonic}~\cite{Gries:1967,Gries-Stoer:1967}
    if, for all 
    $\primal$, $\primal'$ in~$ \RR^{d}$, we have 
    \bp{  \(
      |\primal| \le |\primal'| \text{ and } 
      \primal\circ\primal' \ge 0 \Rightarrow 
      \TripleNorm{\primal} \le \TripleNorm{\primal'}
      \) }.
  \end{itemize}
\end{definition}

We will use the following, easy to prove, properties: 
any monotonic norm is orthant-monotonic;
if a norm is orthant-monotonic, so are its restriction norms
in Definition~\ref{de:K_norm}
(as norms on their respective subspaces).
All the $\ell_p$-norms~$\norm{\cdot}_{p}$, 
for $p\in [1,\infty]$, are monotonic, hence orthant-monotonic.
The definition of an \emph{orthant-monotonic seminorm} is straightforward,
and it is easily proven that
the supremum of a family of orthant-monotonic seminorms is 
an orthant-monotonic seminorm. 
\medskip

We recall the definitions of Birkhoff orthogonality~\cite{Birkhoff:1935},
and of strict Birkhoff orthogonality~\cite{Sain-Paul-Jha:2015}. 

\begin{definition}\label{de:Birkhoff_orthogonal} 
  Let ${\cal U}$ and ${\cal V}$ be two subspaces of~$\RR^d$.
  Let $\TripleNorm{\cdot}$ be a norm on~$\RR^d$.
  \begin{itemize}
  \item 
    We say that the subspace~${\cal U}$ is \emph{Birkhoff orthogonal}~\cite{Birkhoff:1935}
    to the subspace~${\cal V}$, 
    denoted by \( {\cal U} \perp_{\TripleNorm{\cdot}} {\cal V} \) if 
    \(       \TripleNorm{u + v} \geq \TripleNorm{u} \), 
    for any \( u \in {\cal U} \) and any \( v \in {\cal V} \), that is, 
    \begin{equation}
      {\cal U} \perp_{\TripleNorm{\cdot}} {\cal V} \iff
      \TripleNorm{u + v} \geq \TripleNorm{u}
      \eqsepv \forall u \in {\cal U}
      \eqsepv \forall v \in {\cal V}
      \eqfinp
      \label{eq:Birkhoff_orthogonal} 
    \end{equation}
  \item 
    We say that the subspace~${\cal U}$ is \emph{strictly Birkhoff orthogonal}~\cite{Sain-Paul-Jha:2015} 
    to the subspace~${\cal V}$, 
    denoted by \( {\cal U} \perp_{\TripleNorm{\cdot}}^{>} {\cal V} \) if 
    \(       \TripleNorm{u + v} > \TripleNorm{u} \), 
    for any \( u \in {\cal U} \) and any \( v \in {\cal V}\backslash\{0\} \), that is,
    \begin{equation}
      {\cal U} \perp_{\TripleNorm{\cdot}}^{>} {\cal V} \iff 
      \TripleNorm{u + v} > \TripleNorm{u}
      \eqsepv \forall u \in {\cal U}
      \eqsepv \forall v \in {\cal V}\backslash\{0\}
      \eqfinp
      \label{eq:strictly_Birkhoff_orthogonal} 
    \end{equation}
  \end{itemize}
\end{definition}

Now, we are ready to recall established characterizations 
of orthant-monotonic norms,
and to add new characterizations, namely Item~\ref{it:ICS} and Item~\ref{it:projection}
in the following Proposition~\ref{pr:orthant-monotonic}.

\begin{proposition}
  Let $\TripleNorm{\cdot}$ be a norm on~$\RR^d$.
  The following assertions are equivalent.
  \begin{enumerate}
  \item 
    The norm $\TripleNorm{\cdot}$ is orthant-monotonic.
    \label{it:orthant-monotonic_IN_pr:orthant-monotonic}
  \item 
    The norm $\TripleNormDual{\cdot}$ is orthant-monotonic.
    \label{it:dual_orthant-monotonic_IN_pr:orthant-monotonic}
  \item 
    \( \TripleNorm{\cdot}_{K,\star} = \TripleNorm{\cdot}_{\star,K} \),
    for all \( K \subset\ic{1,d} \).
    \label{it:K_star_=_star_K_IN_pr:orthant-monotonic}
  \item 
    \( \FlatRR_{K} \perp_{\TripleNorm{\cdot}} \FlatRR_{-K} \),
    for all \( K \subset\ic{1,d} \).
    \label{it:Birkhoff_orthogonal_IN_pr:orthant-monotonic}
  \item 
    \( \FlatRR_{K} \perp_{\TripleNorm{\cdot}} \FlatRR_{-K} \),
    for all \( K \subset\ic{1,d} \) with  $|K|= d-1$.
  \item 
    \label{it:dual_couple_IN_pr:orthant-monotonic}
    For any vector \( u \in \RR^d\backslash\{0\} \),
    there exists a vector \( v \in \RR^d\backslash\{0\} \)
    such that \( \Support{v} \subset \Support{u} \),
    that \( u~\circ~v \geq 0 \) 
    and that $v$ is \( \TripleNorm{\cdot} \)-dual to~$u$
    as in~\eqref{eq:couple_TripleNorm-dual}.
  \item 
    \label{it:ICS}
    The norm $\TripleNorm{\cdot}$ is \emph{increasing with the coordinate subspaces},
    in the sense that, for any \( \primal \in \RR^d \)
    and any \( J \subset K \subset\ic{1,d} \),
    we have $ \TripleNorm{\primal_{J}} \leq \TripleNorm{\primal_K}$.
  \item 
    \( \pi_K\np{\TripleNormBall} = \FlatRR_{K} \cap \TripleNormBall \),
    for all \( K \subset\ic{1,d} \).
    \label{it:projection}
  \end{enumerate}
  \label{pr:orthant-monotonic}
\end{proposition}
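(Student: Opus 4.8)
The plan is to funnel seven of the eight assertions through a single elementary property that I will call \emph{restriction-monotonicity}: $\TripleNorm{\primal_{K}} \leq \TripleNorm{\primal}$ for every $\primal \in \RR^{d}$ and every $K \subset \ic{1,d}$; call it property~(R). First I would dispatch the cheap reductions to~(R). Property~(R) is Item~\ref{it:Birkhoff_orthogonal_IN_pr:orthant-monotonic}: decomposing $\primal = \primal_{K} + \primal_{-K}$ with $\primal_{K} \in \FlatRR_{K}$ and $\primal_{-K} \in \FlatRR_{-K}$ turns $\FlatRR_{K} \perp_{\TripleNorm{\cdot}} \FlatRR_{-K}$ into $\TripleNorm{\primal_{K}} \leq \TripleNorm{\primal}$, and conversely. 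The $|K| = d-1$ assertion is the special case removing one coordinate, and iterating single removals on intermediate vectors recovers~(R) for all $K$. Item~\ref{it:ICS} is~(R) with a general outer set, obtained by applying~(R) to $\primal_{K}$ and $J \subset K$ since $(\primal_{K})_{J} = \primal_{J}$ (and ICS with outer set $\ic{1,d}$ is~(R)). Item~\ref{it:projection} is~(R) in disguise: the inclusion $\FlatRR_{K} \cap \TripleNormBall \subset \pi_{K}(\TripleNormBall)$ is automatic, while the reverse inclusion states exactly that $\TripleNorm{\primal} \leq 1 \Rightarrow \TripleNorm{\primal_{K}} \leq 1$, i.e.\ property~(R) after rescaling. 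Finally $\ref{it:orthant-monotonic_IN_pr:orthant-monotonic} \Rightarrow (R)$ is immediate, since $|\primal_{K}| \leq |\primal|$ and $\primal_{K} \circ \primal \geq 0$ let orthant-monotonicity act on the pair $(\primal_{K}, \primal)$.

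The only step with real content in this block is $(R) \Rightarrow \ref{it:orthant-monotonic_IN_pr:orthant-monotonic}$, and it runs on convexity. If $|\primal| \leq |\primal'|$ and $\primal \circ \primal' \geq 0$, then componentwise $\primal_{i} = t_{i}\primal'_{i}$ with $t_{i} \in [0,1]$, so $\primal = t \circ \primal'$ for some $t \in [0,1]^{d}$. As $[0,1]^{d}$ is the convex hull of $\{0,1\}^{d}$, I can write $t = \sum_{S} \lambda_{S}\mathbb{1}_{S}$ with $\lambda_{S} \geq 0$ and $\sum_{S}\lambda_{S} = 1$, whence $\primal = \sum_{S}\lambda_{S}\,\primal'_{S}$; the triangle inequality and~(R) then give $\TripleNorm{\primal} \leq \sum_{S}\lambda_{S}\TripleNorm{\primal'_{S}} \leq \sum_{S}\lambda_{S}\TripleNorm{\primal'} = \TripleNorm{\primal'}$. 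This settles the equivalence of Items~\ref{it:orthant-monotonic_IN_pr:orthant-monotonic}, \ref{it:Birkhoff_orthogonal_IN_pr:orthant-monotonic}, \ref{it:ICS}, \ref{it:projection} and the $|K|=d-1$ assertion.

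Next I would bring in Items~\ref{it:K_star_=_star_K_IN_pr:orthant-monotonic} and~\ref{it:dual_orthant-monotonic_IN_pr:orthant-monotonic} through the self-duality of $\pi_{K}$ in~\eqref{eq:orthogonal_projection_self-dual}. For $\dual \in \FlatRR_{K}$ one has $\TripleNorm{\dual}_{\star,K} = \TripleNormDual{\dual} = \sup_{\primal \in \TripleNormBall}\proscal{\primal_{K}}{\dual} = \sigma_{\pi_{K}(\TripleNormBall)}(\dual)$, whereas $\TripleNorm{\dual}_{K,\star} = \sigma_{\TripleNormBall \cap \FlatRR_{K}}(\dual)$ because $\TripleNormBall \cap \FlatRR_{K}$ is the unit ball of $\TripleNorm{\cdot}_{K}$; since both sets are compact convex subsets of $\FlatRR_{K}$, equality of these support functions on $\FlatRR_{K}$ is equivalent to equality of the sets, giving Item~\ref{it:K_star_=_star_K_IN_pr:orthant-monotonic} $\Leftrightarrow$ Item~\ref{it:projection}. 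For Item~\ref{it:dual_orthant-monotonic_IN_pr:orthant-monotonic}, the same computation shows $\TripleNormDual{\dual_{K}} = \sup_{\TripleNorm{\primal}\leq 1}\proscal{\primal_{K}}{\dual}$; under~(R) every $\primal_{K}$ lies in $\TripleNormBall$, so this supremum is at most $\TripleNormDual{\dual}$, i.e.\ $\TripleNormDual{\cdot}$ satisfies~(R) and hence, by the already proved $(R)\Rightarrow\ref{it:orthant-monotonic_IN_pr:orthant-monotonic}$ applied to $\TripleNormDual{\cdot}$, is orthant-monotonic; the converse follows by swapping $\TripleNorm{\cdot}$ and $\TripleNormDual{\cdot}$ via biduality~\eqref{eq:bidual_norm=norm}.

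The main obstacle is the dual-vector characterization, Item~\ref{it:dual_couple_IN_pr:orthant-monotonic}, because of its sign constraint $u \circ v \geq 0$. The implication $\ref{it:dual_couple_IN_pr:orthant-monotonic}\Rightarrow(R)$ is easy and does not even use the sign: applying Item~\ref{it:dual_couple_IN_pr:orthant-monotonic} to $u = \primal_{K}$ produces $v$ with $\Support{v} \subset K$, and then $\TripleNorm{\primal_{K}}\TripleNormDual{v} = \proscal{\primal_{K}}{v} = \proscal{\primal}{v} \leq \TripleNorm{\primal}\TripleNormDual{v}$ by~\eqref{eq:norm_dual_norm_inequality}. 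For $\ref{it:orthant-monotonic_IN_pr:orthant-monotonic}\Rightarrow\ref{it:dual_couple_IN_pr:orthant-monotonic}$, fix $u \neq 0$ with $S = \Support{u}$, choose $w$ with $\TripleNormDual{w} = 1$ and $\proscal{u}{w} = \TripleNorm{u}$, and set $w_{S} = \pi_{S}(w)$; then $\proscal{u}{w_{S}} = \TripleNorm{u}$ and $\TripleNormDual{w_{S}} \leq 1$ (since $\TripleNormDual{\cdot}$ satisfies~(R) by Item~\ref{it:dual_orthant-monotonic_IN_pr:orthant-monotonic}), so~\eqref{eq:norm_dual_norm_inequality} forces $w_{S}$ to be dual to $u$ with $\Support{w_{S}} \subset S$. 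The delicate point is the sign condition, because orthant-monotonic norms need not be sign-invariant, so one cannot simply flip signs of $w_{S}$. Instead I would argue by perturbation: were $u_{j}(w_{S})_{j} < 0$ for some $j$, then $u^{(t)} = u - t\,\mathrm{sign}(u_{j})e_{j}$ (small $t>0$, staying in the orthant of $u$) would satisfy $\proscal{u^{(t)}}{w_{S}} > \TripleNorm{u}$, yet orthant-monotonicity gives $\TripleNorm{u^{(t)}} \leq \TripleNorm{u}$, contradicting $\proscal{u^{(t)}}{w_{S}} \leq \TripleNorm{u^{(t)}}$. Hence $u \circ w_{S} \geq 0$ and $v := w_{S}$ works. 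This sign-alignment, resting on the freedom to shrink one coordinate while remaining in the orthant of $u$, is the step I expect to demand the most care.
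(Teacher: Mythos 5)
Your proof is correct, but it takes a substantially more self-contained route than the paper: the paper delegates the equivalence of Items~\ref{it:orthant-monotonic_IN_pr:orthant-monotonic}--\ref{it:dual_couple_IN_pr:orthant-monotonic} to a citation of Marques de S\`a and Sodupe and only proves the two new characterizations, Item~\ref{it:ICS} (via Birkhoff orthogonality) and Item~\ref{it:projection} (via the support-function identities for \( \TripleNorm{\cdot}_{\star,K} \) and \( \TripleNorm{\cdot}_{K,\star} \)), whereas you reprove everything. On the parts that overlap, your arguments essentially coincide with the paper's: the reduction of Items~\ref{it:Birkhoff_orthogonal_IN_pr:orthant-monotonic}, \ref{it:ICS} and \ref{it:projection} to the single inequality \( \TripleNorm{\primal_K} \leq \TripleNorm{\primal} \), and the identification of \( \TripleNorm{\cdot}_{\star,K} \) and \( \TripleNorm{\cdot}_{K,\star} \) with the support functions of \( \pi_K\np{\TripleNormBall} \) and \( \FlatRR_{K}\cap\TripleNormBall \) followed by the compact-convex-sets-equal-iff-support-functions-equal step, are the same as in the paper. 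What your proof adds is the content the paper imports from the literature, and both of your key ideas there are sound: the implication from restriction-monotonicity back to orthant-monotonicity via \( \primal = t\circ\primal' \) with \( t\in[0,1]^d = \convexhull\np{\{0,1\}^d} \), hence \( \primal = \sum_S \lambda_S \primal'_S \), is a clean convexity argument; and the sign-alignment in Item~\ref{it:dual_couple_IN_pr:orthant-monotonic}, obtained by perturbing \( u \) along \( -\sign{u_j}e_j \) (which keeps \( \module{u^{(t)}}\leq\module{u} \) and \( u^{(t)}\circ u\geq 0 \), hence \( \TripleNorm{u^{(t)}}\leq\TripleNorm{u} \), while strictly increasing \( \proscal{u^{(t)}}{w_S} \) past \( \TripleNorm{u^{(t)}}\TripleNormDual{w_S} \)), correctly handles the one point where sign-flipping is unavailable because orthant-monotonic norms need not be absolute. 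The trade-off is length versus self-containment; your version makes the proposition independent of the cited reference.
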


\begin{proof}
  The equivalence between all statements but the two last ones can be found
  in~\cite[Proposition~2.4]{Marques_de_Sa-Sodupe:1993}.

  It is easily established that Item~\ref{it:ICS} is equivalent 
  to Item~\ref{it:Birkhoff_orthogonal_IN_pr:orthant-monotonic}.
  Indeed, suppose that Item~\ref{it:ICS} holds true. 
  We consider \( \primal \in \RR^d \)
  and \( J \subset K \subset\ic{1,d} \).
  By setting \( u=\primal_{J} \in \FlatRR_{J} \)
  and \( v = \primal_K-\primal_{J} \), we get that 
  \( v \in \FlatRR_{-J} \). By Item~\ref{it:ICS}, we have that 
  \( \TripleNorm{u} \leq \TripleNorm{u + v} \),
  hence that $ \TripleNorm{\primal_{J}} \leq \TripleNorm{\primal_K}$.
  The reverse implication is proved in the same way. 
  
  We now show that
  Item~\ref{it:K_star_=_star_K_IN_pr:orthant-monotonic}
  and Item~\ref{it:projection} are equivalent.
  For this purpose, let $\TripleNorm{\cdot}$ be a norm on~$\RR^d$
  and \( K \subset\ic{1,d} \), and let us admit for a while that
  \begin{subequations}
    \begin{align}
      \TripleNorm{\dual}_{\star,K} 
      &=
        \sigma_{\pi_K\np{\TripleNormBall}}\np{\dual} 
        = \sigma_{\pi_K\np{\TripleNormSphere}}\np{\dual} 
        \eqsepv \forall \dual \in \FlatRR_{K} 
        \eqfinv
        \label{eq:star_K=sigma}
      \\
      \TripleNorm{\dual}_{K,\star}
      &=
        \sigma_{ \FlatRR_{K} \cap \TripleNormBall }\np{\dual}  
        = \sigma_{ \FlatRR_{K} \cap \TripleNormSphere }\np{\dual}
        \eqsepv \forall \dual \in \FlatRR_{K} 
        \eqfinp
        \label{eq:K_star=sigma}
    \end{align}
  \end{subequations}
  Therefore, the equality  
  \( \TripleNorm{\cdot}_{\star,K} = 
  \TripleNorm{\cdot}_{K,\star} \) is equivalent to
  \( \sigma_{\pi_K\np{\TripleNormBall}} =
  \sigma_{ \FlatRR_{K} \cap \TripleNormBall } \),
  when this last equality is restricted to the subspace~$\FlatRR_{K}$.
  Now, on the one hand,
  the subset \( \pi_K\np{\TripleNormBall} \) of~$\FlatRR_{K}$
  is convex and closed (in the subspace~$\FlatRR_{K}$) 
  as the image of the convex and compact set~$\TripleNormBall$ 
  by the linear mapping~$\pi_K$.
  On the other hand, 
  the subset \( \FlatRR_{K} \cap \TripleNormBall \) of~$\FlatRR_{K}$
  is convex and closed (in the subspace~$\FlatRR_{K}$).
  Therefore, \( \TripleNorm{\cdot}_{\star,K} = 
  \TripleNorm{\cdot}_{K,\star} \) if and only if 
  \( \pi_K\np{\TripleNormBall} =
  \FlatRR_{K} \cap \TripleNormBall \).
  Thus, we have shown that Item~\ref{it:K_star_=_star_K_IN_pr:orthant-monotonic}
  and Item~\ref{it:projection} are equivalent.
  It remains to prove~\eqref{eq:star_K=sigma} and \eqref{eq:K_star=sigma}. 
  \medskip
  
  \noindent $\bullet$
  We prove~\eqref{eq:star_K=sigma}.
  For any \( \dual \in \FlatRR_{K} \), we have
  \begin{align*}
    \TripleNorm{\dual}_{\star,K} 
    &= 
      \TripleNorm{\dual}_{\star}
      \tag{using Definition~\ref{de:K_norm}}
    \\
    &= 
      \sigma_{\TripleNormBall}\np{\dual}
      \tag{by~\eqref{eq:norm_dual_norm} }
    \\
    &= 
      \sup_{\primal \in \TripleNormBall} \proscal{\primal}{\dual} 
      \tag{by definition~\eqref{eq:support_function} of the support function~$\sigma_{\TripleNormBall}$ }
    \\
    &= 
      \sup_{\primal \in \TripleNormBall} \proscal{\primal}{\pi_K\np{\dual}}
      \tag{as \( \dual=\pi_K\np{\dual} \) because \( \dual \in \FlatRR_{K} \) }
    \\
    &= 
      \sup_{\primal \in \TripleNormBall} \proscal{\pi_K\np{\primal}}{\dual}
      \tag{by the self-duality property~\eqref{eq:orthogonal_projection_self-dual} of 
      the projection mapping~\( \pi_K \) }
    \\
    &= 
      \sup_{\primal' \in \pi_K\np{\TripleNormBall}} \proscal{\primal'}{\dual}
    \\
    &= 
      \sigma_{\pi_K\np{\TripleNormBall}}\np{\dual} 
      \tag{by definition~\eqref{eq:support_function} of the support function~$\sigma_{\pi_K\np{\TripleNormBall}}$
      }
      \eqfinp 
  \end{align*}
  Thus, we have proved that  \( \TripleNorm{\dual}_{\star,K} = 
  \sigma_{\pi_K\np{\TripleNormBall}}\np{\dual} \).

  It remains to prove that \( \sigma_{\pi_K\np{\TripleNormBall}}\np{\dual} 
  = \sigma_{\pi_K\np{\TripleNormSphere}}\np{\dual} \).
  Now, as the unit ball \( \TripleNormBall \) is equal to the convex hull
  \( \convexhull\np{\TripleNormSphere} \) 
  of the unit sphere~\( \TripleNormSphere \),
  we get that \( \pi_K\np{\TripleNormBall} = 
  \pi_K\np{ \convexhull\np{\TripleNormSphere} } \).
  As $\pi_K$ is a linear mapping, we easily obtain that
  \( \pi_K\np{ \convexhull\np{\TripleNormSphere} } = 
  \convexhull\np{\pi_K\np{\TripleNormSphere}} \). 
  Since \( \sigma_{ \convexhull\np{\pi_K\np{\TripleNormSphere}} } =
  \sigma_{\pi_K\np{\TripleNormSphere}} \)
  \cite[Prop.~7.13]{Bauschke-Combettes:2017}, 
  we conclude that 
  \( \TripleNorm{\dual}_{\star,K} = 
  \sigma_{\pi_K\np{\TripleNormBall}}
  = \sigma_{ \convexhull\np{\pi_K\np{\TripleNormSphere}} }
  = \sigma_{\pi_K\np{\TripleNormSphere}} \) on~\( \FlatRR_{K} \),
  that is, equality~\eqref{eq:star_K=sigma} holds true. 
  \medskip

  \noindent $\bullet$
  We prove~\eqref{eq:K_star=sigma}. 
  
  By~\eqref{eq:norm_dual_norm}, we have the equality 
  \( \TripleNorm{\cdot}_{K,\star} = \sigma_{  \FlatRR_{K} \cap \TripleNormBall } \) 
  on~$\FlatRR_{K} $, as \( \FlatRR_{K} \cap \TripleNormBall \) is easily
  seen to be the unit ball (in~$\FlatRR_{K} $) 
  of the restriction norm~\( \TripleNorm{\cdot}_{K} \) in~\eqref{eq:K_norm}.
  Therefore, we have proved that 
  \( \TripleNorm{\dual}_{K,\star}=
  \sigma_{ \FlatRR_{K} \cap \TripleNormBall }\np{\dual} \)
  for any \( \dual \in \FlatRR_{K} \).

  Now, we prove that 
  \( \sigma_{ \FlatRR_{K} \cap \TripleNormBall }\np{\dual} 
  = \sigma_{ \FlatRR_{K} \cap \TripleNormSphere }\np{\dual} \)
  for any \( \dual \in \FlatRR_{K} \).
  It is easy to check that the unit sphere (in~$\FlatRR_{K} $) 
  of the restriction norm~\( \TripleNorm{\cdot}_{K} \) in~\eqref{eq:K_norm}
  is $\FlatRR_{K} \cap \TripleNormSphere$. 
  Then, using the fact that the convex hull 
  (be it in~$\FlatRR_{K} $ or in~$\RR^d$) 
  of the unit sphere $\FlatRR_{K} \cap \TripleNormSphere$
  is the unit ball \( \FlatRR_{K} \cap \TripleNormBall \),
  we have that \( \convexhull\np{ \FlatRR_{K} \cap \TripleNormSphere } 
  = \FlatRR_{K} \cap \TripleNormBall \).
  As \( \sigma_{ \convexhull\np{ \FlatRR_{K} \cap \TripleNormSphere } } = 
  \sigma_{ \FlatRR_{K} \cap \TripleNormSphere } \)
  \cite[Prop.~7.13]{Bauschke-Combettes:2017}, 
  we conclude that 
  \( \TripleNorm{\cdot}_{K,\star}
  = \sigma_{ \FlatRR_{K} \cap \TripleNormBall }
  = \sigma_{ \convexhull\np{ \FlatRR_{K} \cap \TripleNormSphere } } 
  = \sigma_{ \FlatRR_{K} \cap \TripleNormSphere } \) on~\( \FlatRR_{K} \),
  that is, equality~\eqref{eq:K_star=sigma} holds true. 
\end{proof}

As an example, we illustrate Item~\ref{it:dual_couple_IN_pr:orthant-monotonic}
of Proposition~\ref{pr:orthant-monotonic} with the
$\ell_1$ and $\ell_{\infty}$ norms, which both are orthant-monotonic.
Let ${\mathbb I}\in \RR^d$ denote the vector whose components are all equal to one.
For any vector \( u \in \RR^d \), 
\begin{itemize}
\item 
  the vector
  $v = \mathrm{sign}(u)$ is such that
  \( \Support{v} = \Support{u} \),
  that \( u~\circ~v \geq 0 \),
  and is $\norm{\cdot}_{1}$-dual to the vector~$u$;  
  this last assertion is obvious for $u=0$ and, when
  $u\not=0$, we have that
  \[
    \proscal{u}{v} {=} \proscal{u}{\mathrm{sign}(u)}
    = \proscal{|u|}{{\mathbb I}}
    = \norm{u}_1   = \norm{u}_1 \times 1 
    = \norm{u}_1\norm{v}_\infty
    \eqfinv
  \]
\item 
  the vector
  $v=\mathrm{sign}(u) \circ {\mathbb I}_{U}$, 
  where $U=\argmax_{i\in \ic{1,d}} |u_i|$,
  is such that
  \( \Support{v} \subset \Support{u} \),
  that \( u~\circ~v \geq 0 \),
  and is  $\norm{\cdot}_{\infty}$-dual to the vector~$u$, as we have
  \[
    \proscal{u}{v} = \proscal{u}{\mathrm{sign}(u)~\circ~{\mathbb I}_{U}} 
    = \proscal{|u|_U}{{\mathbb I}_{U}} 
    = \bscal{ \norm{u}_\infty {\mathbb I}_{U}}{{\mathbb I}_{U}}
    = \norm{u}_\infty \norm{{\mathbb I}_{U}}_1
    = \norm{u}_\infty \norm{v}_1
    \eqfinp
  \]
\end{itemize}


\subsection{Orthant-strictly monotonic norms}
\label{Orthant-strictly_monotonic_norms}

After these recalls, 
we introduce two new notions, that are the strict versions of monotonic
and orthant-monotonic norms.
Then, we provide characterizations 
that will prove especially relevant for the \lzeropseudonorm.

\begin{definition}  
  \label{de:orthant-strictly_monotonic}
  A norm \( \TripleNorm{\cdot}\) on the space~\( \RR^d \) is called
  \begin{itemize}
  \item 
    \emph{strictly monotonic} if, for all 
    $\primal$, $\primal'$ in~$ \RR^{d}$, we have 
    \(
    |\primal| < |\primal'| \Rightarrow 
    \TripleNorm{\primal} < \TripleNorm{\primal'}
    \),
    where \( |\primal| < |\primal'| \) means that 
    $|\primal_i| \le |\primal^{'}_i|$ for all $i\in\ic{1,d}$, 
    and that there exists $j \in \ic{1,d}$ such that
    $|\primal_j| < |\primal^{'}_j|$,
  \item 
    \emph{orthant-strictly monotonic} if, for all 
    $\primal$, $\primal'$ in~$ \RR^{d}$, we have 
    \bp{  \(
      |\primal| < |\primal'| \text{ and } 
      \primal~\circ~\primal' \ge 0 \Rightarrow 
      \TripleNorm{\primal} < \TripleNorm{\primal'}
      \) }.
  \end{itemize}
\end{definition}
We will use the following, easy to prove, properties: 
any strictly monotonic norm is orthant-strictly monotonic;
any orthant-strictly monotonic norm is orthant-monotonic.

All the $\ell_p$-norms~$\norm{\cdot}_{p}$ on the space~\( \RR^d \), 
for $p\in [1,\infty[$, are strictly monotonic, 
hence orthant-strictly monotonic. By contrast, 
the $\ell_\infty$-norm~$\norm{\cdot}_{\infty}$ is 
not orthant-strictly monotonic.

To the difference with orthant-monotonicity (equivalence between
Item~\ref{it:orthant-monotonic_IN_pr:orthant-monotonic}
and
Item~\ref{it:dual_orthant-monotonic_IN_pr:orthant-monotonic}
of Proposition~\ref{pr:orthant-monotonic}),
the notion of orthant-strictly monotonicity is not 
necessarily preserved when taking the dual norm:
indeed, the $\ell_1$-norm~$\norm{\cdot}_{1}$ is orthant-strictly monotonic,
whereas its dual norm, the $\ell_\infty$-norm~$\norm{\cdot}_{\infty}$ 
is orthant-monotonic, but not orthant-strictly monotonic.
\medskip

Now, we provide characterizations of orthant-strictly monotonic norms.

\begin{proposition}
  Let $\TripleNorm{\cdot}$ be a norm on~$\RR^d$.
  The following assertions are equivalent.
  \begin{enumerate}
  \item
    \label{it:OSM}
    The norm $\TripleNorm{\cdot}$ is orthant-strictly monotonic.
  \item
    \label{it:SBO}
    The family 
    \( \sequence{ \FlatRR_{K} }{K \subset \ic{1,d}} \)
    of subspaces of~$\RR^d$ is \emph{strictly Birkhoff orthogonal}, 
    in the sense that \( \FlatRR_{K} \perp_{\TripleNorm{\cdot}}^{>} \FlatRR_{-K} \),
    for all \( K \subset\ic{1,d} \), as in~\eqref{eq:strictly_Birkhoff_orthogonal}.
  \item
    \label{it:SICS}
    The norm $\TripleNorm{\cdot}$ is \emph{strictly increasing
      with the coordinate subspaces},
    in the sense that\footnote{%
      By \( J \subsetneq K \), we mean that  \( J \subset K \) and \( J \neq K \). },
    for any \( \primal \in \RR^d \)
    and any \( J \subsetneq K \subset\ic{1,d} \),
    we have $ \primal_J \neq \primal_K \Rightarrow
    \TripleNorm{\primal_{J}} < \TripleNorm{\primal_K}$.
  \item
    \label{it:SDC}
    For any vector \( u \in \RR^d\backslash\{0\} \), 
    there exists a vector \( v \in \RR^d\backslash\{0\} \)
    such that \( \Support{v} = \Support{u} \),
    that \( u~\circ~v \geq 0 \), and that 
    $v$ is \( \TripleNorm{\cdot} \)-dual to~$u$, that is,
    \( \proscal{u}{v} = \TripleNorm{u} \times \TripleNormDual{v} \). 
  \end{enumerate}
  \label{pr:orthant-strictly_monotonic}
\end{proposition}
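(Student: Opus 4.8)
The plan is to prove the four assertions equivalent through the cycle \ref{it:OSM} $\Rightarrow$ \ref{it:SICS} $\Rightarrow$ \ref{it:SBO} $\Rightarrow$ \ref{it:SDC} $\Rightarrow$ \ref{it:OSM}, so that each statement is routed to the next by the most economical argument. The first two implications are immediate and mirror the orthant-monotone case treated in Proposition~\ref{pr:orthant-monotonic}. For \ref{it:OSM} $\Rightarrow$ \ref{it:SICS}, given \( J \subsetneq K \) with \( \primal_J \neq \primal_K \), I observe that \( \module{\primal_J} < \module{\primal_K} \) (the two coincide on \(J\), while \( \primal_K \) carries a nonzero entry somewhere on \( K \setminus J \)) and that \( \primal_J \circ \primal_K \geq 0 \); orthant-strict monotonicity then gives \( \TripleNorm{\primal_J} < \TripleNorm{\primal_K} \). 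For \ref{it:SICS} $\Rightarrow$ \ref{it:SBO}, given \( u \in \FlatRR_K \) and \( v \in \FlatRR_{-K} \setminus \{0\} \), I set \( \primal = u + v \), so that \( \primal_K = u \) and \( \primal_{K \cup \Support{v}} = u + v \), with \( K \subsetneq K \cup \Support{v} \) because \( \Support{v} \neq \emptyset \) is disjoint from \(K\); then \ref{it:SICS} yields \( \TripleNorm{u} < \TripleNorm{u+v} \), which is exactly \( \FlatRR_K \perp_{\TripleNorm{\cdot}}^{>} \FlatRR_{-K} \).

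For \ref{it:SBO} $\Rightarrow$ \ref{it:SDC} I construct a full-support dual vector. Starting from any \( u \neq 0 \), I pick a dual vector \( w \) attaining \( \TripleNorm{u} = \proscal{u}{w} \) with \( \TripleNormDual{w} = 1 \) (the supremum in~\eqref{eq:norm_dual_norm} is reached by compactness). The crucial observation is that \(w\) cannot vanish on \( \Support{u} \): if \( w_j = 0 \) for some \( j \in \Support{u} \), then deleting the \(j\)-th entry of \(u\) leaves \( \proscal{u}{w} \) unchanged, which forces \( \TripleNorm{u_{-\{j\}}} \geq \TripleNorm{u} \) and contradicts the strict inequality \( \FlatRR_{-\{j\}} \perp_{\TripleNorm{\cdot}}^{>} \FlatRR_{\{j\}} \). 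Since \ref{it:SBO} entails orthant-monotonicity, hence orthant-monotonicity of the dual norm by Item~\ref{it:dual_orthant-monotonic_IN_pr:orthant-monotonic}, I may restrict \(w\) to \( \Support{u} \) and realign its signs with those of \(u\); neither operation increases the dual norm nor destroys the equality case, and the result is a vector \(v\) with \( \Support{v} = \Support{u} \), \( u \circ v \geq 0 \), and \(v\) dual to \(u\).

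The decisive implication is \ref{it:SDC} $\Rightarrow$ \ref{it:OSM}. I would first deduce orthant-monotonicity from \ref{it:SDC}: applying it to \(\primal\) yields a dual vector supported on \( \Support{\primal} \) and sign-aligned with \(\primal\), against which \( \proscal{\primal'}{v} \geq \proscal{\primal}{v} \) as soon as \( \module{\primal} \leq \module{\primal'} \) and \( \primal \circ \primal' \geq 0 \), giving \( \TripleNorm{\primal} \leq \TripleNorm{\primal'} \). Assume now, for contradiction, that \( \module{\primal} < \module{\primal'} \), \( \primal \circ \primal' \geq 0 \), yet \( \TripleNorm{\primal} = \TripleNorm{\primal'} \), normalized to \(1\). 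Form the midpoint \( m = (\primal + \primal')/2 \): because the two vectors share signs, \( \module{\primal} \leq \module{m} \leq \module{\primal'} \), so orthant-monotonicity forces \( \TripleNorm{m} = 1 \), and moreover \( \Support{m} = \Support{\primal'} \). Applying \ref{it:SDC} to \(m\) produces \(v\) with \( \Support{v} = \Support{\primal'} \), aligned with \(m\), and \( \proscal{m}{v} = \TripleNormDual{v} \). As \( \proscal{\primal}{v} \leq \TripleNormDual{v} \) and \( \proscal{\primal'}{v} \leq \TripleNormDual{v} \) while their average equals \( \TripleNormDual{v} \), both must equal \( \TripleNormDual{v} \), whence \( \proscal{\primal' - \primal}{v} = 0 \). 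But \( \primal' - \primal \) is sign-aligned with \(v\), and \(v\) is nonzero on every coordinate of \( \Support{\primal'} \supset \Support{\primal' - \primal} \), so \( \proscal{\primal' - \primal}{v} > 0 \) unless \( \primal = \primal' \) — the sought contradiction.

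I expect this last step to be the genuine obstacle. The naive route — pairing \(\primal\) and \(\primal'\) against a dual vector of \(\primal\), or of \(\primal'\) — breaks down precisely for pure support enlargement, since a dual vector of \(\primal\) is supported on \( \Support{\primal} \) and is therefore orthogonal to the freshly added coordinates, yielding only \( \TripleNorm{\primal} \leq \TripleNorm{\primal'} \). The device that repairs this is to invoke \ref{it:SDC} at the midpoint \(m\), whose support is already \( \Support{\primal'} \): the full support of the resulting dual vector \(v\) is exactly what forces \( \proscal{\primal'-\primal}{v} > 0 \), and it handles the magnitude-increase and support-enlargement regimes uniformly in a single argument. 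A secondary point requiring care is the exact equality \( \Support{v} = \Support{u} \) in \ref{it:SBO} $\Rightarrow$ \ref{it:SDC}, which comes from combining the \ref{it:SBO}-driven nonvanishing on \( \Support{u} \) with the subsequent restriction to \( \Support{u} \).
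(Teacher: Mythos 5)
Your proposal is correct in substance, and for the decisive implication \ref{it:SDC} $\Rightarrow$ \ref{it:OSM} it takes a genuinely different route from the paper. The paper first derives the weak inequality \( \TripleNorm{\primal} \leq \TripleNorm{\primal'} \) against a dual vector of~\( \primal \) and then splits the strict part into two cases (a coordinate of \( \Support{\primal} \) whose modulus strictly increases, versus pure support enlargement, the latter reduced to the former through an auxiliary vector~\( \tilde{\primal} \)). Your midpoint device --- applying Item~\ref{it:SDC} to \( m=(\primal+\primal')/2 \), whose support is already \( \Support{\primal'} \), and using equality in the averaged duality inequality~\eqref{eq:norm_dual_norm_inequality} to force \( \proscal{\primal'-\primal}{v}=0 \) against the componentwise strict positivity of \( (\primal'-\primal)\circ v \) on \( \Support{\primal'-\primal} \) --- handles both regimes uniformly and is, if anything, cleaner; I checked the sign bookkeeping (\( \Support{m}=\Support{\primal'} \), \( \TripleNorm{m}=1 \) by the already-established orthant-monotonicity, and \( \mathrm{sign}(v_i)=\mathrm{sign}(m_i)=\mathrm{sign}(\primal'_i) \) on \( \Support{\primal'} \)) and it goes through. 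The reordering of the cycle, with \ref{it:SICS} placed before \ref{it:SBO}, is immaterial, since those two items are near-reformulations of one another, exactly as in the orthant-monotone case.

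The one step that does not survive scrutiny as written is the sign realignment in \ref{it:SBO} $\Rightarrow$ \ref{it:SDC}. An orthant-monotonic norm need not be absolute, so flipping the signs of some components of~\( w \) can change \( \TripleNormDual{w} \); your claim that realignment ``neither increases the dual norm nor destroys the equality case'' is false in general (for the orthant-monotonic norm on~\( \RR^2 \) whose unit ball is the hexagon with vertices \( \pm(1,0) \), \( \pm(0,1) \), \( \pm(1,-1) \), one has \( \TripleNorm{(1,1)}=2\neq 1=\TripleNorm{(-1,1)} \), and the same phenomenon occurs for its dual). Fortunately no realignment is needed: for an orthant-monotonic norm, any \( w \) with \( \proscal{u}{w}=\TripleNorm{u}\,\TripleNormDual{w} \) automatically satisfies \( u\circ w\geq 0 \), since otherwise zeroing the components in \( N=\bset{i}{u_iw_i<0} \) would give \( \proscal{u}{w_{-N}}>\proscal{u}{w} \) while \( \TripleNormDual{w_{-N}}\leq\TripleNormDual{w} \) by orthant-monotonicity of the dual norm, contradicting optimality of~\( w \). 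Equivalently, you may simply invoke Item~\ref{it:dual_couple_IN_pr:orthant-monotonic} of Proposition~\ref{pr:orthant-monotonic}, as the paper does, to obtain \( v \) with \( \Support{v}\subset\Support{u} \) and \( u\circ v\geq 0 \), and then run your nonvanishing argument --- which is correct and is the right use of strict Birkhoff orthogonality --- to upgrade the inclusion of supports to an equality. With that repair the whole cycle closes.
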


\begin{proof}

  \noindent $\bullet$ 
  We prove that Item~\ref{it:OSM} implies Item~\ref{it:SBO}.

  Let  \( K \subset\ic{1,d} \). 
  Let \( u \in \FlatRR_{K} \) and \( v \in \FlatRR_{-K}\backslash\{0\} \),
  that is, \( u=u_{K} \) and \( v=v_{-K}  \neq 0 \).
  We want to show that \( \TripleNorm{u + v} > \TripleNorm{u} \),
  by the definition~\eqref{eq:strictly_Birkhoff_orthogonal} 
  of strict Birkhoff orthogonality. 

  On the one hand, 
  by definition of the module of a vector, 
  we easily see that 
  \( \module{\primal} = \module{ \primal_{K} } + \module{ \primal_{-K} } \),
  for any vector~\( \primal \in \RR^d \). 
  Thus, we have
  \( \module{u+v} = \module{ \np{u+v}_{K} } + \module{ \np{u+v}_{-K} } 
  = \module{ u_{K}+v_{K} } + \module{ u_{-K}+v_{-K} } 
  = \module{ u_{K}+0 } + \module{ 0+v_{-K} } 
  = \module{ u_{K} } + \module{v_{-K} } > \module{ u_{K} } =\module{ u } \) 
  since \( \module{v_{-K} } >0 \) as \( v=v_{-K} \neq 0 \),
  and since \( u=u_{K} \).  
  On the other hand, we easily get that \( \np{u+v}~\circ~u = 
  \bp{ \np{u+v}_{K}~\circ~u_{K} } + \bp{ \np{u+v}_{-K}~\circ~u_{-K} } = 
  \bp{ u_{K}~\circ~u_{K} } + \bp{ v_{-K}~\circ~u_{-K} } = 
  \bp{ u_{K}~\circ~u_{K} } \), because $u_{-K}=0$ and $v_{K}=0$.
  Therefore, we get that 
  \( \np{u+v}~\circ~u = \bp{ u_{K}~\circ~u_{K} } \geq 0 \).

  From \( \module{u+v} > \module{u} \) and 
  \( \np{u+v}~\circ~u \geq 0 \),
  we deduce that \( \TripleNorm{u + v} > \TripleNorm{u} \)
  by Definition~\ref{de:orthant-strictly_monotonic}
  as the norm \( \TripleNorm{\cdot} \) is orthant-strictly monotonic.
  Thus, \eqref{eq:strictly_Birkhoff_orthogonal} is satisfied,
  hence Item~\ref{it:SBO} holds true.
  \medskip

  \noindent $\bullet$ 
  We prove that Item~\ref{it:SBO} implies Item~\ref{it:SICS}.

  Let \( \primal \in \RR^d \)
  and \( J \subsetneq K \subset\ic{1,d} \)
  be and such that $ \primal_J \neq \primal_K $.
  We will show that \( \TripleNorm{\primal_{J}} < \TripleNorm{\primal_K} \).

  As \( J \subsetneq K \subset\ic{1,d} \)
  and $ \primal_J \neq \primal_K $, 
  there exists \( w \in \FlatRR_{-J} \), \( w \neq 0 \), 
  such that \( \primal_K=\primal_J+w \).
  Now, as the family 
  \( \sequence{ \FlatRR_{K} }{K \subset \ic{1,d}} \)
  is strictly Birkhoff orthogonal by assumption (Item~\ref{it:SBO}),
  we have \( \FlatRR_{J} \perp_{\TripleNorm{\cdot}}^{>} \FlatRR_{-J} \).
  As a consequence, we obtain that 
  \( \TripleNorm{\primal_{K}} =  \TripleNorm{\primal_J+w} > \TripleNorm{\primal_J} \).
  \medskip

  \noindent $\bullet$ 
  We prove that Item~\ref{it:SICS} implies Item~\ref{it:SDC}. 

  Let \( u \in \RR^d\backslash\{0\} \) be given 
  and let us put \( K = \Support{u} \neq \emptyset \).
  As the norm \( \TripleNorm{\cdot} \) is orthant-strictly monotonic,
  it is orthant-monotonic; hence, 
  by Item~\ref{it:dual_couple_IN_pr:orthant-monotonic}
  in Proposition~\ref{pr:orthant-monotonic},
  there exists a vector \( v \in \RR^d\backslash\{0\} \)
  such that \( \Support{v} \subset \Support{u} \),
  that \( u~\circ~v \geq 0 \) 
  and that $v$ is \( \TripleNorm{\cdot} \)-dual to~$u$,
  as in~\eqref{eq:couple_TripleNorm-dual},
  that is,  \( \proscal{u}{v} = \TripleNorm{u} \times \TripleNormDual{v} \). 
  Thus \( J = \Support{v} \subset K = \Support{u} \).
  We now show that \( J \subsetneq K  \) is impossible,
  hence that \( J = K  \), thus proving that
  Item~\ref{it:SDC} holds true with the above vector~$v$.

  Writing that  \( \proscal{u}{v} = \TripleNorm{u} \times \TripleNormDual{v} \)
  (using that $u=u_K$ and $v=v_K=v_J$), we obtain
  \[
    \TripleNorm{u} \times \TripleNormDual{v} =
    \proscal{u}{v} = \proscal{u_K}{v} = \proscal{u_K}{v_K} = \proscal{u_K}{v_J} = 
    \proscal{u_J}{v_J} = \proscal{u_J}{v} 
    \eqfinp
  \]
  As a consequence, 
  \( \{u_K,u_J \} \subset \argmax_{ \TripleNorm{\primal} \leq \TripleNorm{u} }
  \proscal{\primal}{v} \), by definition~\eqref{eq:dual_norm} of \(
  \TripleNormDual{v} \),
  because \( \TripleNorm{u}=\TripleNorm{u_K} \geq \TripleNorm{u_J} \),
  by Item~\ref{it:ICS} in Proposition~\ref{pr:orthant-monotonic}
  since \( J \subset K \) and the norm \( \TripleNorm{\cdot} \) 
  is orthant-monotonic. 
  But any solution in \( \argmax_{ \TripleNorm{\primal} \leq \TripleNorm{u} }
  \proscal{\primal}{v} \) belongs to the frontier of the ball of
  radius~$\TripleNorm{u}$,
  hence has exactly norm~$\TripleNorm{u}$. 
  Thus, we deduce that \( \TripleNorm{u}=\TripleNorm{u_K}= \TripleNorm{u_J} \).
  If we had \( J = \Support{v} \subsetneq K = \Support{u} \), 
  we would have $u_J \neq u_K$, hence 
  \( \TripleNorm{u_K} > \TripleNorm{u_J} \) by Item~\ref{it:SICS};
  this would be in contradiction with \( \TripleNorm{u_K} = \TripleNorm{u_J} \).
  Therefore,  \( J = \Support{v} =K= \Support{u} \). 
  \medskip

  \noindent $\bullet$ 
  We prove that Item~\ref{it:SDC} implies Item~\ref{it:OSM}.

  Let $\primal$, $\primal'$ in~$ \RR^{d}$ be such that 
  \( |\primal| < |\primal'| \) and 
  \( \primal~\circ~\primal' \ge 0 \).
  We are going to prove that \( \TripleNorm{\primal} < \TripleNorm{\primal'} \).

  We suppose that $\primal\neq 0 $ (otherwise the proof is trivial).
  By Item~\ref{it:SDC}, there exists
  a vector \( w \in \RR^d \) such that \( \Support{w} = \Support{\primal} \),
  \( \primal~\circ~w \ge 0 \) 
  and that \( \proscal{\primal}{w} = \TripleNorm{\primal} \times
  \TripleNormDual{w} \).
  As  \( \Support{w} = \Support{\primal} \) with $\primal\neq 0 $,
  we have \( w\neq 0 \), so that 
  we can always suppose that $\TripleNormDual{w}=1$ (after renormalization), giving 
  \( \TripleNorm{\primal} = \proscal{\primal}{w} \).

  First, we are going to establish that \(  i \in \Support{\primal} \Rightarrow 
  \primal'_i w_i \geq \primal_i w_i  \). 
  From \( |\primal'| > |\primal| \), we deduce that 
  \( |\primal'|^2 \geq |\primal'|~\circ~|\primal| \), and, 
  as \( \primal'~\circ~\primal \ge 0 \),
  we obtain that \( |\primal'|^2 \geq \primal'~\circ~\primal =
  |\primal'|~\circ~|\primal| \ge 0 \). 
  Hence, we deduce
  \[
    \np{ \primal'~\circ~\primal }~\circ~\np{ \primal'~\circ~w } 
    = |\primal'|^2~\circ~\np{ \primal~\circ~w } 
    \geq \np{ \primal'~\circ~\primal }~\circ~\np{ \primal~\circ~w } 
    \eqfinv
  \]
  as \( \primal~\circ~w \ge 0 \). 
  Moving to components, we get that, for all $i\in\ic{1,d}$, 
  \( \primal'_i \primal_i \primal'_i w_i \geq \primal'_i \primal_i \primal_i w_i
  \), so that, on the one hand
  \[
    \primal'_i \primal_i > 0 \Rightarrow   \primal'_i w_i \geq \primal_i w_i
    \eqfinp
  \]
  On the other hand, as \( |\primal'| > |\primal| \) and \( \primal~\circ~\primal' \ge 0 \),
  we easily get that \( \primal'_i \primal_i > 0 \iff i \in \Support{\primal}
  \).
  Therefore, we deduce that \(  i \in \Support{\primal} \Rightarrow \primal'_i \primal_i > 0 
  \Rightarrow \primal'_i w_i \geq \primal_i w_i  \). 

  Second, we show that \( \TripleNorm{\primal} \leq \TripleNorm{\primal'} \).
  Indeed, we have:
  \begin{align*}
    \TripleNorm{\primal'} 
    &=
      \sup_{ \TripleNormDual{w'} \leq 1 } \proscal{\primal'}{w'} 
      \tag{by~\eqref{eq:dual_norm} as
      $\TripleNorm{\cdot}=\np{\TripleNormDual{\cdot}}_\star$ } 
    \\
    & \geq
      \proscal{\primal'}{w} 
      \tag{as $\TripleNormDual{w}=1$ } 
    \\
    &=
      \sum_{ i \in \Support{w} } \primal'_i w_i
    \\
    &= 
      \sum_{ i \in \Support{\primal} } \primal'_i w_i 
      \tag{as \( \Support{w} = \Support{\primal} \) } 
    \\
    &\geq 
      \sum_{ i \in \Support{\primal} } \primal_i w_i 
      \tag{as \( i \in \Support{\primal} \Rightarrow \primal'_i w_i \geq \primal_i w_i  \) }
    \\
    &= 
      \proscal{\primal}{w} 
    \\
    &= \TripleNorm{\primal} 
      \tag{by the property \( \TripleNorm{\primal} = \proscal{\primal}{w} \) of the vector~$w$. }
  \end{align*}

  Third, we show that \( \TripleNorm{\primal} < \TripleNorm{\primal'} \).
  There are two cases.

  In the first case, there exists \( j \in \Support{\primal} \) such that
  \( 0< |\primal_j| < |\primal'_j| \). As a consequence, on the one hand, 
  \( 0< |w_j| |\primal_j| < |w_j| |\primal'_j| \), since $w_j \neq 0$ because
  \( j \in \Support{\primal} = \Support{w} \).
  On the other hand,  \( \primal'_j \primal_j > 0 \) implies 
  \(\primal'_j w_j \geq \primal_j w_j \), as seen above,
  and \( \primal_j w_j \geq 0 \) because \( \primal~\circ~w \ge 0 \).
  Thus, we get that \(\primal'_j w_j \geq \primal_j w_j \geq 0 \). 
  As \( 0< |\primal_j| < |\primal'_j| \), we deduce that \(\primal'_j w_j > \primal_j w_j \).
  Returning to the last inequality in the sequence of 
  equalities and inequalities above, we observe that it is now strict,
  and we conclude that 
  \( \TripleNorm{\primal'} > \TripleNorm{\primal} \).

  In the second case, \( i \in \Support{\primal}  \Rightarrow 
  0< |\primal_i| = |\primal'_i| \).
  As \( |\primal| < |\primal'| \), we deduce that there exists 
  \( j \in \Support{\primal'}\backslash\Support{\primal} \) such that
  \( 0 = |\primal_j| < |\primal'_j| \). 
  We define a new vector \( \tilde{\primal} \) by 
  \( \tilde{\primal}_j=1/2 \primal'_j \neq 0 \) and
  \( \tilde{\primal}_i= \primal_i \) for $i \neq j$.
  Putting \( I=\Support{\primal} \), we have
  \( \tilde{\primal} = \primal_I + 1/2 \primal'_j e_j 
  = \tilde{\primal}_I + \tilde{\primal}_{\{j\}} \),
  where $e_j$ denotes the $j$-canonical vector of~$\RR^d$.
  On the one hand, from the first case we obtain that 
  \( \TripleNorm{\tilde{\primal}} < \TripleNorm{\primal'} \).
  On the other hand, we have 
  \( \TripleNorm{\primal} \leq \TripleNorm{\tilde{\primal}} \);
  indeed, by Proposition~\ref{pr:orthant-monotonic}, 
  Item~\ref{it:SDC} implies that 
  the norm $\TripleNorm{\cdot}$ is orthant-monotonic,
  hence that \( \TripleNorm{\tilde{\primal}} = 
  \TripleNorm{ \tilde{\primal}_I + \tilde{\primal}_{\{j\}} } \geq
  \TripleNorm{ \tilde{\primal}_I } = \TripleNorm{\primal} \).
  We conclude that 
  \( \TripleNorm{\primal} \leq \TripleNorm{\tilde{\primal}} < \TripleNorm{\primal'} \).
  \medskip

  This ends the proof.
\end{proof}

As an example, we illustrate Item~\ref{it:SDC}
of Proposition~\ref{pr:orthant-strictly_monotonic} with the
$\ell_1$ (orthant-strictly monotonic) and $\ell_{\infty}$ (not orthant-strictly monotonic) norms. 
\begin{itemize}
\item
  For any vector \( u \in \RR^d \), 
  we have seen (right after the proof of Proposition~\ref{pr:orthant-strictly_monotonic}) that the vector
  $v = \mathrm{sign}(u)$ is such that
  \( \Support{v} = \Support{u} \),
  that \( u~\circ~v \geq 0 \),
  and is $\norm{\cdot}_1$-dual to the vector~$u$.
  This is another proof that the norm~$\ell_1$ is orthant-strictly monotonic.
\item 
  By contrast, if the vector~$v \neq 0$ is $\norm{\cdot}_{\infty}$-dual to the vector
  $u=\np{1,1/2,0,\ldots,0}$, then an easy computation shows that, necessarily,
  $v=\np{v_1,0,0,\ldots,0}$ with $v_1 > 0$. As a consequence, this gives
  \( \{1\}=\Support{v} \subsetneq \Support{u}=\{1,2\} \).
  This suffices to prove that the norm~$\ell_{\infty}$ is not orthant-strictly monotonic.
\end{itemize}
\medskip

We end this~\S\ref{Orthant-strictly_monotonic_norms}
with additional properties related to exposed and extreme points of 
the unit ball~$\TripleNormBall$ of an orthant-strictly monotonic 
norm~$\TripleNorm{\cdot}$.
We recall that an element $\primal$ of a convex set~$\Convex$ is called an 
\emph{exposed point} of~$\Convex$ if there exists a support hyperplane~$H$ 
to the convex set~$\Convex$ at~$\primal$ such that 
$H\cap \Convex =\na{\primal}$.
We show in the next proposition that orthant-strictly monotonicity implies that
the intersection of the unit sphere~$\TripleNormSphere$ with the subspaces
$\FlatRR_{\na{i}}$ in~\eqref{eq:FlatRR}, for $i\in \ic{1,d}$, is made of exposed points of the unit
ball~$\TripleNormBall$.

\begin{proposition} 
  If the norm $\TripleNorm{\cdot}$ is orthant-strictly monotonic, 
  then the elements of the renormalized canonical basis of~$\RR^d$, that is
  the $e_i/ \TripleNorm{e_i}$ for $i\in \ic{1,d}$, 
  are exposed points of the unit ball~$\TripleNormBall$.
  \label{coro:osmexposed}
\end{proposition}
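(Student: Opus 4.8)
The plan is to exhibit, for each fixed index $i \in \ic{1,d}$, an explicit supporting hyperplane to the unit ball $\TripleNormBall$ that meets it only at the point $\primal^\star = e_i/\TripleNorm{e_i}$. Since the support function of $\TripleNormBall$ is the dual norm (recall~\eqref{eq:norm_dual_norm}), a supporting hyperplane corresponds to a dual vector, and the natural candidate here is a dual vector aligned with $e_i$. First I would apply Item~\ref{it:SDC} of Proposition~\ref{pr:orthant-strictly_monotonic} to the vector $u = e_i$: it furnishes a nonzero vector $v$ with $\Support{v} = \Support{e_i} = \na{i}$, with $e_i \circ v \geq 0$, and that is $\TripleNorm{\cdot}$-dual to $e_i$. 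The support and sign conditions force $v = v_i e_i$ with $v_i > 0$, and the duality equality $\proscal{e_i}{v} = \TripleNorm{e_i}\times\TripleNormDual{v}$ reduces, after using $\TripleNormDual{v}=v_i\TripleNormDual{e_i}$ and cancelling $v_i>0$, to the key relation $\TripleNorm{e_i}\times\TripleNormDual{e_i} = 1$.

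Second, setting $\dual = \TripleNorm{e_i}\, e_i$, I would check that $\TripleNormDual{\dual} = \TripleNorm{e_i}\TripleNormDual{e_i} = 1$ and that $\proscal{\primal^\star}{\dual} = \proscal{e_i}{e_i} = 1$. Hence the hyperplane $H = \bset{x \in \RR^d}{\proscal{x}{\dual} = 1}$ supports $\TripleNormBall$ at $\primal^\star$: indeed $\sup_{x \in \TripleNormBall}\proscal{x}{\dual} = \sigma_{\TripleNormBall}(\dual) = \TripleNormDual{\dual} = 1$ by~\eqref{eq:norm_dual_norm}, and the supremum is attained at $\primal^\star \in \TripleNormSphere$.

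Third, which is the heart of the argument, I would prove that $H \cap \TripleNormBall = \na{\primal^\star}$. Take any $y \in \TripleNormBall$ with $\proscal{y}{\dual} = 1$; since $\dual = \TripleNorm{e_i}\, e_i$, this reads $\TripleNorm{e_i}\, y_i = 1$, whence $y_i = 1/\TripleNorm{e_i}$ and in particular $i \in \Support{y}$. If $y \neq \primal^\star$, then $y$ has a nonzero component at some index different from $i$, so $\na{i} \subsetneq \Support{y}$. Applying Item~\ref{it:SICS} (strict increase with the coordinate subspaces) to the vector $y$, with $J = \na{i}$ and $K = \Support{y}$, and observing that $y_J = y_i e_i = \primal^\star \neq y = y_K$, I would obtain $\TripleNorm{\primal^\star} < \TripleNorm{y}$; but $\TripleNorm{\primal^\star} = 1$ while $\TripleNorm{y} \leq 1$, a contradiction. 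Therefore $y = \primal^\star$, so $\primal^\star = e_i/\TripleNorm{e_i}$ is an exposed point of $\TripleNormBall$.

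The main obstacle is simply to secure a dual vector supported exactly on $\na{i}$, which is exactly what orthant-strict monotonicity buys us through Item~\ref{it:SDC}; once the aligned dual vector is in hand, the exposedness follows cleanly, the only remaining point being to turn a hypothetical off-support component of $y$ into a strictly larger norm via Item~\ref{it:SICS}.
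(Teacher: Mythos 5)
Your proof is correct, but it takes a genuinely different route from the paper's. The paper invokes Item~\ref{it:SBO} of Proposition~\ref{pr:orthant-strictly_monotonic} (strict Birkhoff orthogonality of the coordinate subspaces) to conclude that the renormalized canonical basis is strongly orthonormal relative to~$\overline{e}_i$ in the sense of Birkhoff, and then outsources the conclusion to an external result (Theorem~2.6 of Paul--Sain--Jha) stating that such a vector is an exposed point of the unit ball. You instead build the exposing functional by hand: Item~\ref{it:SDC} applied to $u=e_i$ forces the dual vector to be a positive multiple of~$e_i$ and yields the key identity $\TripleNorm{e_i}\times\TripleNormDual{e_i}=1$ (which is not automatic for a general norm, only the inequality $\geq 1$ is); this gives the supporting hyperplane $\bset{x}{\proscal{x}{\TripleNorm{e_i}\,e_i}=1}$, and Item~\ref{it:SICS} with $J=\na{i}\subsetneq K=\Support{y}$ rules out any other contact point $y$, since an off-support component would force $\TripleNorm{\primal^\star}<\TripleNorm{y}\leq 1$ against $\TripleNorm{\primal^\star}=1$. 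All steps check out (in particular $y_K=y$ and $y_J=\primal^\star$, so the hypothesis $y_J\neq y_K$ of Item~\ref{it:SICS} is indeed satisfied). What your approach buys is self-containedness --- everything is derived from the characterizations already proved in Proposition~\ref{pr:orthant-strictly_monotonic}, with no external citation --- at the cost of a somewhat longer argument; the paper's proof is shorter but leans on a result the reader must look up.
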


\begin{proof} 
  Assume that the norm $\TripleNorm{\cdot}$ is orthant-strictly
  monotonic and fix $i\in \ic{1,d}$. Then, using item~\ref{it:SBO} of
  Proposition~\ref{pr:orthant-strictly_monotonic}, we have that
  \( \TripleNorm{\overline{e}_i + \sum_{j \in \ic{1,d}\backslash\na{i}}
    \lambda_j \overline{e}_j} > \TripleNorm{\overline{e}_i}\), 
  for all $\ba{\lambda_j}_{j \in \ic{1,d}\backslash\na{i}}$ where not all
  $\lambda_j$'s are $0$ and where $\overline{e}_j = {e}_j/ \TripleNorm{e_j}$ for
  all $j\in \ic{1,d}$.  This means that the renormalized canonical basis is
  strongly orthonormal relative to $\overline{e}_i$ in the sense of
  Birkhoff. Using~\cite[Theorem 2.6]{MR3066832}, 
  we obtain that $\overline{e}_i$ is
  an exposed point of the unit ball~$\TripleNormBall$.
  This ends the proof.
\end{proof}

We recall that an \emph{extreme point}~$\primal$ of a convex set~$\Convex$ cannot be written
as \( \primal= \lambda \primal' + (1-\lambda) \primal'' \) with
\( \primal' \in \Convex \),   \( \primal'' \in \Convex \),
\( \primal' \neq \primal \), \( \primal'' \neq \primal \) and $\lambda\in ]0,1[$.
The normed space 
\( \bp{\RR^d,\TripleNorm{\cdot}} \) is said to be \emph{strictly convex}
if the unit ball~$\TripleNormBall$ (of the norm~$\TripleNorm{\cdot}$) is \emph{rotund}, that is,
if all points of the unit sphere~$\TripleNormSphere$ are extreme points
of the unit ball~$\TripleNormBall$.
The normed space \( \bp{\RR^d,\norm{\cdot}_{p}} \), equipped with 
the $\ell_p$-norm~$\norm{\cdot}_{p}$ (for $p\in [1,\infty]$), is strictly convex
if and only if $p\in ]1,\infty[$. 

\begin{proposition}
  If the norm $\TripleNorm{\cdot}$ is orthant-monotonic 
  and if the normed space 
  \( \bp{\RR^d,\TripleNorm{\cdot}} \) is strictly convex,
  then the norm~$\TripleNorm{\cdot}$ is orthant-strictly monotonic.
  \label{pr:orthant-monotonic+rotund}
\end{proposition}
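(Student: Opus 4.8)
The plan is to argue by contradiction, reducing orthant-strict monotonicity to the rotundity of the unit ball through a midpoint argument. Let $\primal,\primal'\in\RR^d$ satisfy $\module{\primal}<\module{\primal'}$ and $\primal\circ\primal'\geq 0$; I want to show $\TripleNorm{\primal}<\TripleNorm{\primal'}$. Since the norm is orthant-monotonic by assumption, I already have $\TripleNorm{\primal}\leq\TripleNorm{\primal'}$, so the whole task is to rule out equality. The case $\primal=0$ is immediate: then $\module{\primal}<\module{\primal'}$ forces $\primal'\neq 0$, whence $\TripleNorm{\primal}=0<\TripleNorm{\primal'}$. So I may assume $\primal\neq 0$, and I record that $\primal\neq\primal'$, since $\module{\primal}<\module{\primal'}$ provides a coordinate $j$ with $\module{\primal_j}<\module{\primal'_j}$, so that $\primal_j\neq\primal'_j$.

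Next I would introduce the midpoint $m=\tfrac{1}{2}(\primal+\primal')$ and exploit that the hypothesis $\primal\circ\primal'\geq 0$ places $\primal$ and $\primal'$ in a common (closed) orthant. Coordinatewise, $\primal_i$ and $\primal'_i$ have the same sign (or one vanishes), which gives the identity $\module{m_i}=\tfrac{1}{2}(\module{\primal_i}+\module{\primal'_i})$ for every $i\in\ic{1,d}$. Combined with $\module{\primal}\leq\module{\primal'}$, this yields $\module{\primal}\leq\module{m}\leq\module{\primal'}$ componentwise, and one checks $\primal\circ m\geq 0$ and $m\circ\primal'\geq 0$, so that $\primal$, $m$, $\primal'$ all lie in the same orthant. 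Applying orthant-monotonicity twice then sandwiches the norms as $\TripleNorm{\primal}\leq\TripleNorm{m}\leq\TripleNorm{\primal'}$.

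Finally, suppose for contradiction that $\TripleNorm{\primal}=\TripleNorm{\primal'}$; call this common value $t$, which is positive since $\primal\neq 0$. The sandwich inequality forces $\TripleNorm{m}=t$ as well. Rescaling by $1/t$, the vectors $\primal/t$ and $\primal'/t$ lie on the unit sphere~$\TripleNormSphere$, are distinct (because $\primal\neq\primal'$), and their midpoint $m/t$ again lies on~$\TripleNormSphere$. This contradicts the strict convexity (rotundity) of the unit ball~$\TripleNormBall$, for which the midpoint of two distinct points of the unit sphere must have norm strictly less than one. Hence equality is impossible and $\TripleNorm{\primal}<\TripleNorm{\primal'}$, so the norm is orthant-strictly monotonic.

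I expect the only delicate point to be the coordinatewise identity $\module{m_i}=\tfrac{1}{2}(\module{\primal_i}+\module{\primal'_i})$, which is exactly where the sign condition $\primal\circ\primal'\geq 0$ is used, and without which the midpoint could have smaller modulus than $\primal$ in some coordinate, breaking the sandwich; everything else is a routine application of orthant-monotonicity and of the definition of rotundity.
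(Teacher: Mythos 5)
Your proof is correct, but it takes a genuinely different route from the paper's. The paper argues at the level of subspaces: it invokes its characterization of orthant-monotonicity as Birkhoff orthogonality of the coordinate subspaces \( \FlatRR_{K} \perp_{\TripleNorm{\cdot}} \FlatRR_{-K} \) (Item~\ref{it:Birkhoff_orthogonal_IN_pr:orthant-monotonic} of Proposition~\ref{pr:orthant-monotonic}), then cites \cite[Theorem 2.2]{Sain-Paul-Jha:2015} to upgrade Birkhoff orthogonality to strict Birkhoff orthogonality under rotundity, and finally applies the implication from Item~\ref{it:SBO} to Item~\ref{it:OSM} in Proposition~\ref{pr:orthant-strictly_monotonic}. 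You instead work directly from the definitions with a midpoint argument: the sign condition \( \primal\circ\primal'\geq 0 \) gives the coordinatewise identity \( \module{m_i}=\tfrac{1}{2}\bp{\module{\primal_i}+\module{\primal'_i}} \), which places \( \primal \), \( m \), \( \primal' \) in a common orthant with sandwiched moduli, and two applications of orthant-monotonicity plus rotundity of the ball rule out equality of the norms. All the steps check out (including the separate treatment of \( \primal=0 \), the positivity of \( t \), and the fact that \( m/t \) is distinct from both endpoints since \( \primal\neq\primal' \)). What your approach buys is self-containedness: it avoids the external reference and the machinery of strict Birkhoff orthogonality entirely, at the cost of not illustrating how the result fits into the chain of equivalent characterizations (Propositions~\ref{pr:orthant-monotonic} and~\ref{pr:orthant-strictly_monotonic}) that the paper has built and reuses elsewhere.
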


\begin{proof}
  In~\cite[Theorem 2.2]{Sain-Paul-Jha:2015}, we find the following result:
  if the family \( \sequence{ \FlatRR_{K} }{K \subset \ic{1,d}} \)
  of subspaces of~$\RR^d$ is Birkhoff orthogonal 
  for a norm $\TripleNorm{\cdot}$,
  and if the unit ball for that norm is rotund,
  then the family \( \sequence{ \FlatRR_{K} }{K \subset \ic{1,d}} \) is
  strictly Birkhoff orthogonal. 

  Now for the proof.
  If the norm $\TripleNorm{\cdot}$ is orthant-monotonic, then 
  the family \( \sequence{ \FlatRR_{K} }{K \subset \ic{1,d}} \)
  of subspaces of~$\RR^d$ is Birkhoff orthogonal
  by Item~\ref{it:Birkhoff_orthogonal_IN_pr:orthant-monotonic}
  in Proposition~\ref{pr:orthant-monotonic}.
  As the unit ball for that norm is rotund, we deduce that 
  the family \( \sequence{ \FlatRR_{K} }{K \subset \ic{1,d}} \) is
  strictly Birkhoff orthogonal. 
  As Item~\ref{it:SBO} implies Item~\ref{it:OSM} 
  in Proposition~\ref{pr:orthant-strictly_monotonic},
  we conclude that the norm $\TripleNorm{\cdot}$ is orthant-strictly monotonic.
\end{proof}

\section{Generalized top-$k$ and $k$-support norms}
\label{Generalized_top-k_and_k-support_norms}

Let $\TripleNorm{\cdot}$ be a norm on~$\RR^d$, 
that we call the \emph{source norm}.
In~\S\ref{Definitions_of_generalized_top-k_and_k-support_norms},
we introduce generalized top-$k$ and $k$-support norms
constructed from the source norm,
and we provide various examples.
In~\S\ref{Properties_of_generalized_top-k_and_k-support_norms},
we establish properties valid for any source norm,
whereas, in~\S\ref{Properties_of_generalized_top-k_and_k-support_norms_under_orthant-monotonicity},
we establish properties valid when the source norm is orthant-monotonic, 
making thus the connection with the previous
Sect.~\ref{Orthant-monotonic_and_orthant-strictly_monotonic_norms}.

\subsection{Definition and examples}
\label{Definitions_of_generalized_top-k_and_k-support_norms}

We introduce generalized top-$k$ and $k$-support norms
that are constructed from the source norm~$\TripleNorm{\cdot}$. 

\begin{definition}
  \label{de:top_norm}
  For \( k \in \ic{1,d} \), we call \emph{generalized top-$k$ norm}
  (associated with the source norm~$\TripleNorm{\cdot}$)
  the norm defined by\footnote{%
    The notation \( \sup_{\cardinal{K} \leq k} \) is a shorthand for 
    \( \sup_{ { K \subset \ic{1,d}, \cardinal{K} \leq k}} \).}
  \begin{equation}
    \TopNorm{\TripleNorm{\primal}}{k}
    =
    \sup_{\cardinal{K} \leq k} \TripleNorm{\primal_K} 
    \eqsepv \forall \primal \in \RR^d 
    \eqfinp 
    \label{eq:top_norm}
  \end{equation}
  %
  We call \emph{generalized $k$-support norm}
  the dual norm of the generalized top-$k$ norm, denoted by\footnote{%
    We use the symbol~$\star$ in the superscript to indicate that the generalized
    $k$-support norm  \( \SupportNorm{\TripleNorm{\cdot}}{k} \) is a dual norm.
    To stress the point, we use the letter~$\primal$ for a primal vector,
    like in~\( \TopNorm{\TripleNorm{\primal}}{k} \),
    and the letter~$\dual$ for a dual vector,
    like in~\( \SupportNorm{\TripleNorm{\dual}}{k} \). }
  \( \SupportNorm{\TripleNorm{\cdot}}{k} \):
  \begin{equation}
    \SupportNorm{\TripleNorm{\cdot}}{k} 
    = \Bp{ \TopNorm{\TripleNorm{\cdot}}{k} }_{\star}
    \eqfinp
    \label{eq:support_norm}
  \end{equation}
\end{definition}
It is easily verified that \( \TopNorm{\TripleNorm{\cdot}}{k} \) indeed is a
norm, for all~\( k \in \ic{1,d} \).


We provide examples 
of generalized top-$k$ and $k$-support norms
in the case of permutation invariant monotonic source norms
and of $\ell_p$ source norms.
Table~\ref{tab:Examples} provides a summary.

\subsubsubsection{The case of permutation invariant monotonic source norms}

Letting \( \primal\in\RR^d \) and
$\nu$ be a permutation of \( \ic{1,d} \) such that
\( \module{ \primal_{\nu(1)} } \geq \module{ \primal_{\nu(2)} } 
\geq \cdots \geq \module{ \primal_{\nu(d)} } \),
we note \( \primal^{\downarrow} = \bp{ \module{ \primal_{\nu(1)} }, 
  \module{ \primal_{\nu(2)} }, \ldots, \module{ \primal_{\nu(d)} } } \).
The proof of the following Lemma is easy.
\begin{lemma}
  \label{lem:permutation_invariant_monotonic} 
  Let $\TripleNorm{\cdot}$ be a norm on~$\RR^d$.
  Then, if the norm \( \TripleNorm{\cdot} \) is
  permutation invariant and monotonic,
  we have that $\TopNorm{\TripleNorm{\primal}}{k} = 
  \TripleNorm{ \primal^{\downarrow}_{ \{ 1,\ldots,k \} }}  $,
  where $\primal^{\downarrow}_{ \{ 1,\ldots,k \} } \in \RR^d$ is given by
  $\np{\primal^{\downarrow}}_{\{ 1,\ldots,k \}}$,
  for all $\primal \in \RR^d$.
\end{lemma}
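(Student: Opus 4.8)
The plan is to show directly that for a permutation invariant and monotonic norm, the supremum in the definition \eqref{eq:top_norm} of the generalized top-$k$ norm is attained at the index set picking out the $k$ largest-in-modulus components of $\primal$. First I would fix $\primal \in \RR^d$ and a subset $K \subset \ic{1,d}$ with $\cardinal{K} \leq k$. Since $\TripleNorm{\cdot}$ is permutation invariant, the value $\TripleNorm{\primal_K}$ depends only on the multiset of moduli $\bset{ \module{\primal_j} }{ j \in K }$ of the retained components (the others being zero), so without loss of generality I may compare against sets that retain large components. The key inequality to establish is that, among all $K$ with $\cardinal{K} \leq k$, the value $\TripleNorm{\primal_K}$ is maximized by choosing $K$ to consist of indices achieving the $k$ largest values of $\module{\primal_j}$.

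The core of the argument uses monotonicity. Let $K^{\star}$ be a set of $k$ indices realizing the $k$ largest moduli, so that $\primal_{K^{\star}}$ has, up to permutation, the modulus vector $\primal^{\downarrow}_{\{1,\ldots,k\}}$. For any competing $K$ with $\cardinal{K} \leq k$, I would argue that there is a permutation $\nu$ of $\ic{1,d}$ such that $\module{\np{\primal_K}_{\nu(i)}} \leq \module{\np{\primal_{K^{\star}}}_i}$ holds componentwise: indeed, the sorted modulus sequence of the components retained by $K$ is dominated, entry by entry, by $\primal^{\downarrow}_{\{1,\ldots,k\}}$, because $K$ retains at most $k$ components and each of its $j$-th largest retained moduli is at most the $j$-th largest modulus of $\primal$ overall. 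Applying permutation invariance to realign and then monotonicity then gives $\TripleNorm{\primal_K} \leq \TripleNorm{\primal_{K^{\star}}} = \TripleNorm{\primal^{\downarrow}_{\{1,\ldots,k\}}}$. Taking the supremum over all admissible $K$ yields $\TopNorm{\TripleNorm{\primal}}{k} \leq \TripleNorm{\primal^{\downarrow}_{\{1,\ldots,k\}}}$, and the reverse inequality is immediate because $K^{\star}$ is itself an admissible set in the supremum.

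The step I expect to require the most care is the componentwise domination claim, namely producing the permutation that simultaneously aligns the retained components of $\primal_K$ below those of $\primal_{K^{\star}}$. This is essentially the statement that sorting the moduli of any $k$-subset of the components is dominated by the top-$k$ sorted moduli, which is intuitively clear but must be written out as a statement about the sorted sequences $\primal^{\downarrow}$. Once that elementary majorization-type fact about sorted subsequences is in hand, the two applications of permutation invariance (to pass to sorted order) and monotonicity (to pass to the dominating vector) close the argument cleanly, and the identification of $\primal^{\downarrow}_{\{1,\ldots,k\}}$ as $\np{\primal^{\downarrow}}_{\{1,\ldots,k\}}$ is just unwinding the notation introduced before the lemma.
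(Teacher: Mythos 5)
Your proposal is correct and is precisely the argument the paper has in mind: the authors omit the proof entirely (declaring it ``easy''), and the intended route is exactly yours --- the sorted moduli of any subvector $\primal_K$ with $\cardinal{K}\leq k$ are dominated entrywise by $\primal^{\downarrow}_{\{1,\ldots,k\}}$, so permutation invariance (together with the fact that a monotonic norm depends only on $\module{\primal}$) and monotonicity give $\TripleNorm{\primal_K}\leq\TripleNorm{\primal^{\downarrow}_{\{1,\ldots,k\}}}$, with equality attained at the set of indices of the $k$ largest moduli.
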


\subsubsubsection{The case of $\ell_p$ source norms}

We start with generalized top-$k$ norms as in~\eqref{eq:top_norm}
(see the second column of Table~\ref{tab:Examples}). 
When the norm~$\TripleNorm{\cdot}$ is the Euclidean norm~\( \norm{\cdot}_2 \) of~\( \RR^d \),
the generalized top-$k$ norm is known under different names:
the top-$(k,2)$ norm in~\cite[p.~8]{Tono-Takeda-Gotoh:2017},
or the $2$-$k$-symmetric gauge norm~\cite{Mirsky:1960}
or the Ky Fan vector norm~\cite{Obozinski-Bach:hal-01412385}.
Indeed, in all these cases, the norm of a vector~$\primal$ is obtained
with a subvector of size~$k$ having the~$k$ largest components in module,
because the assumptions of Lemma~\ref{lem:permutation_invariant_monotonic} are satisfied.
More generally, when the norm~$\TripleNorm{\cdot}$ is the $\ell_p$-norm
$\norm{\cdot}_{p}$, for $p\in [1,\infty]$, 
the assumptions of Lemma~\ref{lem:permutation_invariant_monotonic} are also satisfied, as 
$\ell_p$-norms are permutation invariant and monotonic. 
Therefore, we obtain 
that the corresponding generalized top-$k$ norm
\( \TopNorm{ \bp{ \norm{\cdot}_{p} } }{k} \) has the expression 
\( \TopNorm{ \bp{ \norm{\cdot}_{p} } }{k}\np{\primal}
=  \sup_{\cardinal{K} \leq k} {\norm{\primal_K}}_{p} 
= {\norm{ \primal^{\downarrow}_{ \{ 1,\ldots,k \} }}}_{p} \),
for all \( \primal \in \RR^d \).
Thus, we have obtained that the generalized top-$k$ norm associated with the
$\ell_p$-norm is the norm
${\norm{ ({\cdot})^{\downarrow}_{ \{ 1,\ldots,k \} }}}_{p}$:
we call it\footnote{We invert the indices in the naming convention
  of~\cite[p.~5, p.~8]{Tono-Takeda-Gotoh:2017}, where top-$(k,1)$ and top-$(k,2)$
  were used.}
\emph{\lptopnorm{p}{k}} and we denote it by~\( \LpTopNorm{\cdot}{p}{k} \).
Notice that \( \LpTopNorm{\cdot}{\infty}{k} 
= \norm{\cdot}_{\infty}\) for all~$k\in\ic{1,d}$.
\medskip

Now, we turn to generalized $k$-support norms as in~\eqref{eq:support_norm}
(see the third column of Table~\ref{tab:Examples}).
When the norm~$\TripleNorm{\cdot}$ is the Euclidean norm~\( \norm{\cdot}_2 \) of~\( \RR^d \),
the generalized $k$-support norm is the so-called
$k$-support norm~\cite{Argyriou-Foygel-Srebro:2012}. 
More generally, in~\cite[Definition 21]{McDonald-Pontil-Stamos:2016},
the authors define the $k$-support $p$-norm or \emph{\lpsupportnorm{p}{k}}
for $p\in [1,\infty]$.
They show, in~\cite[Corollary 22]{McDonald-Pontil-Stamos:2016},
that the dual norm \( \Bp{ \TopNorm{ \bp{ \norm{\cdot}_{p} } }{k} }_\star \) 
of the above top-$(k,p)$ norm
is the $(q,k)$-support norm, where \( 1/p + 1/q = 1 \).
Thus, what we call the generalized $k$-support norm 
\( \SupportNorm{ \bp{ \norm{\cdot}_{p} } }{k} =
\Bp{ \TopNorm{ \bp{ \norm{\cdot}_{p} } }{k} }_\star \) 
associated with the $\ell_p$-norm
is the \lpsupportnorm{q}{k}, that we denote~\( \LpSupportNorm{\dual}{q}{k} \).
The formula \( \LpSupportNorm{\primal}{\infty}{k} =
\max \na{ \Norm{\primal}_{1} / k , \Norm{\primal}_{\infty} } \)
can be found in~\cite[Exercise IV.1.18, p. 90]{Bhatia:1997}.

\begin{table}
  \centering
  \begin{tabular}{||c||c|c||}
    \hline\hline 
    source norm~\( \TripleNorm{\cdot} \) 
    & \( \TopNorm{\TripleNorm{\cdot}}{k} \)
    & \( \SupportNorm{\TripleNorm{\cdot}}{k} \)
    \\
    \hline\hline 
    \( \norm{\cdot}_{p} \)
    & \lptopnorm{p}{k} 
    & \lpsupportnorm{q}{k} 
    \\
    & \( \LpTopNorm{\primal}{p}{k} \) 
    & \( \LpSupportNorm{\dual}{q}{k} \) 
    \\
    & \( =\bp{ \sum_{\LocalIndex=1}^{k} \module{ \primal_{\nu(\LocalIndex)} }^p }^{1/p} \)
    & \( 1/p + 1/q =1 \)
    \\
    \hline
    \( \norm{\cdot}_{1} \) 
    & \lptopnorm{1}{k} 
    & \lpsupportnorm{\infty}{k} 
    \\
    & \( \LpTopNorm{\primal}{1}{k} = \sum_{l=1}^{k} \module{ \primal_{\nu(l)} } \) 
    & \( \LpSupportNorm{\dual}{\infty}{k} =
      \max \na{ \Norm{\dual}_{1} / k , \Norm{\dual}_{\infty} } \) 
    \\
    \hline
    \( \norm{\cdot}_{2} \) 
    & \lptopnorm{2}{k} 
    & \lpsupportnorm{2}{k} 
    \\
    & \( \LpTopNorm{\primal}{2}{k} = \sqrt{ \sum_{l=1}^{k} \module{ \primal_{\nu(l)} }^2 } \)
    & \( \LpSupportNorm{\dual}{2}{k} \) no analytic expression
    \\
    & & (computation in~\cite[Prop.~2.1]{Argyriou-Foygel-Srebro:2012})
    \\
    \hline 
    \( \norm{\cdot}_{\infty} \) 
    & \lptopnorm{\infty}{k} 
    & \lpsupportnorm{1}{k} 
    \\
    & $\ell_{\infty}$-norm 
    & $\ell_{1}$-norm 
    \\
    & \( \LpTopNorm{\primal}{\infty}{k} = \module{ \primal_{\nu(1)} } = \Norm{\primal}_{\infty} \) 
    & \( \LpSupportNorm{\dual}{1}{k} = \Norm{\dual}_{1} \) 
    \\
    \hline\hline
  \end{tabular}
  \caption{Examples of generalized top-$k$ and $k$-support norms
    generated by the $\ell_p$ source norms 
    \( \TripleNorm{\cdot} = \norm{\cdot}_{p} \) for $p\in [1,\infty]$;
    $\nu$ is a permutation of \( \ic{1,d} \) such that
    \( \module{ \primal_{\nu(1)} } \geq \module{ \primal_{\nu(2)} } 
    \geq \cdots \geq \module{ \primal_{\nu(d)} } \)
    \label{tab:Examples}}
\end{table}

\subsection{General properties} 
\label{Properties_of_generalized_top-k_and_k-support_norms}

We establish properties of generalized top-$k$ and $k$-support norms,
valid for any source norm, 
that will be useful to prove our results 
in~Sect.~\ref{The_lzeropseudonorm_orthant-monotonicity_and_generalized_top-k_and_k-support_norms}.

\subsubsection*{Properties of generalized top-$k$ norms}

We denote the unit ball 
of the generalized top-$k$ norm
\( \TopNorm{\TripleNorm{\cdot}}{k} \) 
in Definition~\ref{de:top_norm} by
\begin{align}
  \TopNorm{\TripleNormBall}{k} 
  &= 
    \bset{\primal \in \RR^d}{\TopNorm{\TripleNorm{\primal}}{k} \leq 1} 
    \eqsepv \forall k\in\ic{1,d}
    \eqfinp
    \label{eq:generalized_top-k_norm_unit_ball}
\end{align}

\begin{proposition}\quad
  \begin{itemize}
  \item 
    For \( k \in \ic{1,d} \), the generalized top-$k$ norm
    \( \TopNorm{\TripleNorm{\cdot}}{k} \) 
    (in Definition~\ref{de:top_norm}) has the expression
    \begin{equation}
      \TopNorm{\TripleNorm{\primal}}{k}
      =
      \sup_{\cardinal{K} \leq k} 
      \sigma_{\pi_K\np{\TripleNormDualSphere}}\np{\primal} 
      \eqsepv \forall \primal \in \RR^d 
      \eqfinv
      \label{eq:generalized_top-k_norm_equality}
    \end{equation}
    where $\TripleNormDualSphere$ is the unit sphere 
    of the dual norm~$\TripleNormDual{\cdot}$ as
    in~\eqref{eq:triplenorm_Dual_unit_sphere}.
  \item 
    We have the inequality
    \begin{equation}
      \TripleNorm{\primal} \leq \TopNorm{\TripleNorm{\primal}}{d} 
      \eqsepv \forall \primal \in \RR^d 
      \eqfinp  
      \label{eq:generalized_top-d_norm_inequality}
    \end{equation}
  \item 
    The sequence \( \bseqa{\TopNorm{\TripleNorm{\cdot}}{\LocalIndex}}{\LocalIndex\in\ic{1,d}} \)
    of generalized top-$k$ norms in~\eqref{eq:top_norm} is nondecreasing, 
    in the sense that the following inequalities hold true
    \begin{equation}
      \TopNorm{\TripleNorm{\primal}}{1} \leq \cdots \leq 
      \TopNorm{\TripleNorm{\primal}}{\LocalIndex} \leq 
      \TopNorm{\TripleNorm{\primal}}{\LocalIndex+1} \leq \cdots \leq 
      \TopNorm{\TripleNorm{\primal}}{d} 
      \eqsepv \forall \primal \in \RR^d 
      \eqfinp
      \label{eq:generalized_top-k_norm_inequalities}
    \end{equation}
  \item 
    The sequence 
    \( \bseqa{ \TopNorm{\TripleNormBall}{\LocalIndex}}{\LocalIndex\in\ic{1,d}} \)
    of units balls of the generalized top-$k$ norms 
    in~\eqref{eq:generalized_top-k_norm_unit_ball} is nonincreasing, 
    in the sense that the following inclusions hold true:
    \begin{equation}
      \TopNorm{\TripleNormBall}{d} 
      \subset \cdots \subset \TopNorm{\TripleNormBall}{\LocalIndex+1}
      \subset \TopNorm{\TripleNormBall}{\LocalIndex}\subset \cdots 
      \subset \TopNorm{\TripleNormBall}{1}
      \eqfinp 
      \label{eq:generalized_top-k_norm_unit_balls_inclusions}
    \end{equation}
  \end{itemize}
\end{proposition}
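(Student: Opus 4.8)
The plan is to treat the four claims in order, observing at the outset that only the first carries any computational content; the last three are essentially immediate consequences of the fact that $\TopNorm{\TripleNorm{\cdot}}{k}$ is defined in~\eqref{eq:top_norm} as a supremum over the family $\bset{K \subset \ic{1,d}}{\cardinal{K} \leq k}$, together with the elementary duality between pointwise inequalities of norms and inclusions of their unit balls.

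First I would establish the representation~\eqref{eq:generalized_top-k_norm_equality}, following verbatim the argument already used for~\eqref{eq:star_K=sigma} in the proof of Proposition~\ref{pr:orthant-monotonic}. Fixing $k \in \ic{1,d}$ and a subset $K \subset \ic{1,d}$, I start from the identity $\TripleNorm{\cdot} = \sigma_{\TripleNormDualSphere}$ in~\eqref{eq:norm_dual_norm}, applied to the vector $\primal_K$, to write
\[
  \TripleNorm{\primal_K} = \sigma_{\TripleNormDualSphere}\np{\primal_K}
  = \sup_{\dual \in \TripleNormDualSphere} \proscal{\primal_K}{\dual} \eqfinp
\]
By the self-duality of the orthogonal projection~$\pi_K$ recorded in~\eqref{eq:orthogonal_projection_self-dual}, one has $\proscal{\primal_K}{\dual} = \proscal{\primal}{\pi_K\np{\dual}}$, so the supremum over $\dual \in \TripleNormDualSphere$ turns into a supremum of $\proscal{\primal}{\dual'}$ over $\dual' \in \pi_K\np{\TripleNormDualSphere}$, giving $\TripleNorm{\primal_K} = \sigma_{\pi_K\np{\TripleNormDualSphere}}\np{\primal}$. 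Taking the supremum over all $K$ with $\cardinal{K} \leq k$ and invoking Definition~\ref{de:top_norm} then yields~\eqref{eq:generalized_top-k_norm_equality}.

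The remaining three items are short. For~\eqref{eq:generalized_top-d_norm_inequality}, the choice $K = \ic{1,d}$ is admissible in the supremum defining $\TopNorm{\TripleNorm{\primal}}{d}$ (as $\cardinal{\ic{1,d}} = d$) and gives $\primal_K = \primal$, whence $\TripleNorm{\primal} \leq \TopNorm{\TripleNorm{\primal}}{d}$. For the monotonicity~\eqref{eq:generalized_top-k_norm_inequalities}, the index family $\bset{K}{\cardinal{K} \leq k}$ grows with $k$, so a supremum over the larger family is no smaller, giving $\TopNorm{\TripleNorm{\primal}}{k} \leq \TopNorm{\TripleNorm{\primal}}{k+1}$ for every~$\primal$. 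Finally, the inclusions~\eqref{eq:generalized_top-k_norm_unit_balls_inclusions} are the dual statement of this pointwise inequality: if $\TopNorm{\TripleNorm{\primal}}{k+1} \leq 1$ then a fortiori $\TopNorm{\TripleNorm{\primal}}{k} \leq 1$, so $\TopNorm{\TripleNormBall}{k+1} \subset \TopNorm{\TripleNormBall}{k}$.

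I anticipate no genuine obstacle here. The only step with real content is the first one, and it is a direct adaptation of the support-function computation already carried out for~\eqref{eq:star_K=sigma}, the key mechanism being the self-duality of~$\pi_K$, which lets one move the projection off the primal vector~$\primal_K$ and onto the dual unit sphere~$\TripleNormDualSphere$.
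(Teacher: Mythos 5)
Your proposal is correct and follows essentially the same route as the paper: the representation~\eqref{eq:generalized_top-k_norm_equality} is obtained by writing $\TripleNorm{\primal_K}=\sigma_{\TripleNormDualSphere}\np{\primal_K}$ and moving the projection~$\pi_K$ onto the dual sphere via the self-duality property~\eqref{eq:orthogonal_projection_self-dual}, and the remaining three items are read off directly from the definition~\eqref{eq:top_norm} exactly as in the paper. No gaps.
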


\begin{proof}
  
  \noindent $\bullet$ 
  For any \( \primal \in \RR^d \), we have
  \begin{align*}
    \TopNorm{\TripleNorm{\primal}}{k} 
    &=
      \sup_{\cardinal{K} \leq k} \TripleNorm{\primal_K}
      \intertext{by definition~\eqref{eq:top_norm} of the generalized top-$k$ norm
      \( \TopNorm{\TripleNorm{\cdot}}{k} \) }
    &=
      \sup_{\cardinal{K} \leq k} 
      \sigma_{ \TripleNormDualSphere }\np{\primal_K} 
      \tag{by~\eqref{eq:norm_dual_norm}}
    \\
    &= 
      \sup_{\cardinal{K} \leq k} 
      \sup_{\dual \in \TripleNormDualSphere} \proscal{\primal_K}{\dual} 
      \tag{by definition~\eqref{eq:support_function} of the support function~$\sigma_{\TripleNormDualSphere}$ }
    \\
    &= 
      \sup_{\cardinal{K} \leq k} 
      \sup_{\dual \in \TripleNormDualSphere} \proscal{\primal}{\pi_K\np{\dual}}
      \tag{by the self-duality property~\eqref{eq:orthogonal_projection_self-dual} of 
      the projection mapping~\( \pi_K \) }
    \\ 
    &= 
      \sup_{\cardinal{K} \leq k} 
      \sup_{\dual' \in \pi_K\np{\TripleNormDualSphere}} \proscal{\primal}{\dual'}
    \\
    &= 
      \sup_{\cardinal{K} \leq k} 
      \sigma_{\pi_K\np{\TripleNormDualSphere}}\np{\primal} 
      \tag{by definition~\eqref{eq:support_function} of the support function~$\sigma_{\pi_K\np{\TripleNormDualSphere}}$
      }
  \end{align*}
  and we get~\eqref{eq:generalized_top-k_norm_equality}.
  \medskip

  \noindent $\bullet$ 
  From the very definition~\eqref{eq:top_norm}
  of the generalized top-$d$ norm \( \TopNorm{\TripleNorm{\cdot}}{d} \),
  we get
  \[
    \TopNorm{\TripleNorm{\primal}}{d} =
    \sup_{\cardinal{K} \leq d} \TripleNorm{\primal_K} 
    \geq \TripleNorm{\primal_{\ic{1,d}}} =  \TripleNorm{\primal}
    \eqsepv \forall \primal \in \RR^d \eqfinv
  \]
  hence~\eqref{eq:generalized_top-d_norm_inequality}. 
  \medskip

  \noindent $\bullet$ 
  The inequalities~\eqref{eq:generalized_top-k_norm_inequalities} 
  between norms easily derive 
  from the very definition~\eqref{eq:top_norm}
  of the generalized top-$k$ norms
  \( \TopNorm{\TripleNorm{\cdot}}{k} \).
  \medskip

  \noindent $\bullet$ 
  The inclusions~\eqref{eq:generalized_top-k_norm_unit_balls_inclusions}
  between unit balls
  directly follow from the
  inequalities~\eqref{eq:generalized_top-k_norm_inequalities}
  between norms.
  \medskip

  This ends the proof. 
\end{proof}

\subsubsection*{Properties of generalized $k$-support norms}

We denote the unit ball 
of the generalized $k$-support norm~\( \SupportNorm{\TripleNorm{\cdot}}{k} \)
in Definition~\ref{de:top_norm} by
\begin{align}
  \SupportNorm{\TripleNormBall}{k} 
  &  = 
    \defset{\dual \in \RR^d}{\SupportNorm{\TripleNorm{\dual}}{k} \leq 1} 
    \eqsepv \forall k\in\ic{1,d}
    \eqfinp
    \label{eq:generalized_k-support_norm_unit_ball}
\end{align}

\begin{proposition}
  \quad
  \begin{itemize}
  \item 
    For \( k \in \ic{1,d} \), the generalized $k$-support norm~\( \SupportNorm{\TripleNorm{\cdot}}{k} \)
    in Definition~\ref{de:top_norm} has unit ball
    \begin{equation}
      \SupportNorm{\TripleNormBall}{k} =
      \closedconvexhull\bp{ \bigcup_{ \cardinal{K} \leq k} 
        \pi_{K} \np{ \TripleNormDualSphere } }
      \eqfinv
      \label{eq:generalized_k-support_norm_unit_ball_property}
    \end{equation}
    where \( \closedconvexhull\np{S} \) denotes the closed convex hull
    of a subset \( S \subset \RR^d \).
  \item 
    We have the inequality
    \begin{equation}
      \SupportNorm{\TripleNorm{\dual}}{d}
      \leq \TripleNormDual{\dual} 
      \eqsepv \forall \dual \in \RR^d 
      \eqfinp  
      \label{eq:generalized_d-support_norm_inequality}
    \end{equation}
  \item 
    The sequence 
    \( \bseqa{\SupportNorm{\TripleNorm{\cdot}}{\LocalIndex}}{\LocalIndex\in\ic{1,d}} \)
    of generalized $k$-support norms in~\eqref{eq:support_norm} is nonincreasing, 
    in the sense that the following inequalities hold true
    \begin{equation}
      \SupportNorm{\TripleNorm{\dual}}{d}
      \leq \cdots \leq 
      \SupportNorm{\TripleNorm{\dual}}{\LocalIndex+1}\leq 
      \SupportNorm{\TripleNorm{\dual}}{\LocalIndex}
      \leq \cdots \leq 
      \SupportNorm{\TripleNorm{\dual}}{1} 
      \eqsepv \forall \dual \in \RR^d 
      \eqfinp
      \label{eq:generalized_k-support_norm_inequalities}
    \end{equation}
  \item 
    The sequence 
    \( \bseqa{ \SupportNorm{\TripleNormBall}{\LocalIndex}}{\LocalIndex\in\ic{1,d}} \)
    of units balls of the generalized $k$-support norms 
    in~\eqref{eq:generalized_k-support_norm_unit_ball} is nondecreasing, 
    in the sense that the following inclusions hold true:
    \begin{equation}
      \SupportNorm{\TripleNormBall}{1} 
      \subset \cdots \subset
      \SupportNorm{\TripleNormBall}{\LocalIndex} \subset 
      \SupportNorm{\TripleNormBall}{\LocalIndex+1} 
      \subset \cdots \subset \SupportNorm{\TripleNormBall}{d} 
      \eqfinp 
      \label{eq:generalized_k-support_norm_unit_balls_inclusions}
    \end{equation}
  \end{itemize}
\end{proposition}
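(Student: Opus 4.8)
The plan is to derive all four items from the corresponding statements on generalized top-$k$ norms established in the preceding proposition, exploiting the duality~\eqref{eq:support_norm} between the two families. Only the unit-ball formula~\eqref{eq:generalized_k-support_norm_unit_ball_property} requires genuine work; the other three items are routine consequences of the order-reversing behaviour of dualization.

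For~\eqref{eq:generalized_k-support_norm_unit_ball_property}, I would begin from the representation~\eqref{eq:generalized_top-k_norm_equality} of the top-$k$ norm as a supremum of support functions. Since a supremum of support functions is the support function of the union of the underlying sets, this gives $\TopNorm{\TripleNorm{\cdot}}{k} = \sigma_{S_k}$, where $S_k = \bigcup_{\cardinal{K} \leq k} \pi_K\np{\TripleNormDualSphere}$. On the other hand, by the definition~\eqref{eq:support_norm} together with the bidual identity~\eqref{eq:bidual_norm=norm}, the top-$k$ norm is the dual norm of the $k$-support norm; hence by~\eqref{eq:norm_dual_norm} it equals $\sigma_{\SupportNorm{\TripleNormBall}{k}}$, the support function of the $k$-support unit ball. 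Equating $\sigma_{\SupportNorm{\TripleNormBall}{k}} = \sigma_{S_k}$ and invoking the fact that a closed convex set is determined by its support function (and that the support function of a set agrees with that of its closed convex hull), I conclude $\SupportNorm{\TripleNormBall}{k} = \closedconvexhull\np{S_k}$, which is~\eqref{eq:generalized_k-support_norm_unit_ball_property}. I would note in passing that $\TripleNormDualSphere$ is compact and each $\pi_K$ is linear, so $S_k$ is compact; thus the closure in the closed convex hull is in fact superfluous here, though harmless.

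The inequality~\eqref{eq:generalized_d-support_norm_inequality} follows by dualizing~\eqref{eq:generalized_top-d_norm_inequality}: the pointwise bound $\TripleNorm{\cdot} \leq \TopNorm{\TripleNorm{\cdot}}{d}$ reverses under dualization, yielding $\SupportNorm{\TripleNorm{\cdot}}{d} = \bp{\TopNorm{\TripleNorm{\cdot}}{d}}_\star \leq \TripleNormDual{\cdot}$. Likewise, the monotonicity~\eqref{eq:generalized_k-support_norm_inequalities} is obtained by dualizing the nondecreasing chain~\eqref{eq:generalized_top-k_norm_inequalities}: since $\TopNorm{\TripleNorm{\cdot}}{\LocalIndex} \leq \TopNorm{\TripleNorm{\cdot}}{\LocalIndex+1}$ and passing to dual norms reverses inequalities, the generalized $k$-support norms form a nonincreasing sequence. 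Finally, the inclusions~\eqref{eq:generalized_k-support_norm_unit_balls_inclusions} are simply the restatement of~\eqref{eq:generalized_k-support_norm_inequalities} at the level of unit balls (a smaller norm has a larger unit ball), equivalently the polars of the inclusions~\eqref{eq:generalized_top-k_norm_unit_balls_inclusions}.

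The only point demanding care is the order-reversing behaviour of dualization together with the identification of a supremum of support functions with the support function of the union; beyond these standard facts, no step presents a real obstacle.
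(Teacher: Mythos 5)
Your proposal is correct and follows essentially the same route as the paper: the unit-ball formula is obtained by identifying $\TopNorm{\TripleNorm{\cdot}}{k}$ both as the support function of $\bigcup_{\cardinal{K}\leq k}\pi_K\np{\TripleNormDualSphere}$ (via~\eqref{eq:generalized_top-k_norm_equality} and the union-to-supremum property) and as the support function of the closed convex set $\SupportNorm{\TripleNormBall}{k}$, and the remaining three items follow from the order-reversing behaviour of dualization and polarity, exactly as in the paper (which merely derives the ball inclusions before the norm inequalities rather than after). Your side remark that the closure is superfluous because the union is compact is correct and harmless.
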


\begin{proof}

  \noindent $\bullet$
  For any \( \primal \in \RR^d \), we have
  \begin{align*}
    \TopNorm{\TripleNorm{\primal}}{k} 
    &=
      \sup_{\cardinal{K} \leq k} 
      \sigma_{\pi_K\np{\TripleNormDualSphere}}\np{\primal} 
      \tag{by~\eqref{eq:generalized_top-k_norm_equality}}
    \\
    &=
      \sigma_{ \bigcup_{ \cardinal{K} \leq k} 
      \pi_{K}\np{ \TripleNormDualSphere } } \np{\primal}
      \tag{as the support function turns a union of sets into a supremum}  
    \\
    &=
      \sigma_{ \closedconvexhull\bp{ \bigcup_{ \cardinal{K} \leq k} 
      \pi_{K}\np{ \TripleNormDualSphere } } }\np{\primal}
      \tag{by~\cite[Prop.~7.13]{Bauschke-Combettes:2017} }
  \end{align*}
  and we obtain~\eqref{eq:generalized_k-support_norm_unit_ball_property}
  thanks to~\eqref{eq:norm_dual_norm}.
  \medskip

  \noindent $\bullet$ 
  From the inequality~\eqref{eq:generalized_top-d_norm_inequality} between norms,
  we deduce the inequality~\eqref{eq:generalized_d-support_norm_inequality}
  between dual norms, by the definition~\eqref{eq:dual_norm} of a dual norm.
  \medskip

  \noindent $\bullet$ 
  The inequalities in~\eqref{eq:generalized_k-support_norm_inequalities} easily derive 
  from the inclusions~\eqref{eq:generalized_k-support_norm_unit_balls_inclusions}.
  \medskip

  \noindent $\bullet$ 
  The  inclusions~\eqref{eq:generalized_k-support_norm_unit_balls_inclusions}
  directly follow from the 
  inclusions~\eqref{eq:generalized_top-k_norm_unit_balls_inclusions}
  and from~\eqref{eq:norm_dual_norm_unit_ball}
  as \( \SupportNorm{\TripleNormBall}{k} = 
  \bp{\TopNorm{\TripleNormBall}{k}}^{\odot} \),
  the polar set of~\( \TopNorm{\TripleNormBall}{k} \).

  This ends the proof. 
\end{proof}

\subsection{Properties under orthant-monotonicity}
\label{Properties_of_generalized_top-k_and_k-support_norms_under_orthant-monotonicity}

We establish properties of generalized top-$k$ and $k$-support norms,
valid when the source norm is orthant-monotonic, 
that will be useful to prove our results in~Sect.~\ref{The_lzeropseudonorm_orthant-monotonicity_and_generalized_top-k_and_k-support_norms}.

\begin{proposition} \label{pr:source_norm_orthant-monotonic_generalized_top-k_norm} 
  \quad
  \begin{enumerate}
  \item 
    Let \( k \in \ic{1,d} \).
    If the source norm~$\TripleNorm{\cdot}$ is orthant-monotonic,
    then 
    \begin{itemize}
    \item 
      the generalized top-$k$ norm has the expression
      \begin{equation}
        \TopNorm{\TripleNorm{\primal}}{k}
        =
        \sup_{\cardinal{K} \leq k} 
        \sigma_{ \FlatRR_{K} \cap \TripleNormDualSphere } \np{\primal}
        \eqsepv \forall \primal \in \RR^d 
        \eqfinv
        \label{eq:TopNorm=CoordinateNorm}
      \end{equation}
      where $\TripleNormDualSphere$ is the unit sphere 
      of the dual norm~$\TripleNormDual{\cdot}$ as
      in~\eqref{eq:triplenorm_Dual_unit_sphere},
    \item 
      the unit ball of the $k$-support norm is given by 
      \begin{equation}
        \SupportNorm{\TripleNormBall}{k} 
        = \closedconvexhull\bp{ \bigcup_{ \cardinal{K} \leq k} 
          \np{ \FlatRR_{K} \cap \TripleNormDualSphere } }
        \eqfinp
        \label{eq:dual_support_norm_unit_ball}
      \end{equation}
    \end{itemize}
  \item 
    The source norm~$\TripleNorm{\cdot}$ is orthant-monotonic
    if and only if 
    \( \TripleNorm{\cdot} =\TopNorm{\TripleNorm{\cdot}}{d} \)
    if and only if 
    \( \TripleNormDual{\cdot} =\SupportNorm{\TripleNorm{\cdot}}{d} \).
    \label{it:generalized_top-d_norm_equality}
  \item 
    If the source norm~$\TripleNorm{\cdot}$ is orthant-monotonic, then
    the generalized top-$k$ norms and the generalized $k$-support norms
    are orthant-monotonic.
    \label{it:generalized_top-ksupport_norm_orthant-monotonic}
  \end{enumerate}
\end{proposition}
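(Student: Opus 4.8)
The plan is to treat the three items in order, starting from the general expressions~\eqref{eq:generalized_top-k_norm_equality} and~\eqref{eq:generalized_k-support_norm_unit_ball_property} already established for an arbitrary source norm, and to invoke orthant-monotonicity only to replace the projected sphere $\pi_K\np{\TripleNormDualSphere}$ by the sliced sphere $\FlatRR_{K} \cap \TripleNormDualSphere$. For Item~1, I would start from $\TopNorm{\TripleNorm{\primal}}{k} = \sup_{\cardinal{K} \leq k} \sigma_{\pi_K\np{\TripleNormDualSphere}}\np{\primal}$ in~\eqref{eq:generalized_top-k_norm_equality} and establish the pointwise identity $\sigma_{\pi_K\np{\TripleNormDualSphere}} = \sigma_{\FlatRR_{K} \cap \TripleNormDualSphere}$ in three steps. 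First pass from sphere to ball: as $\TripleNormDualBall = \convexhull\np{\TripleNormDualSphere}$ and $\pi_K$ is linear, one has $\pi_K\np{\TripleNormDualBall} = \convexhull\np{\pi_K\np{\TripleNormDualSphere}}$, so the two sets share the same support function. Since $\TripleNorm{\cdot}$ is orthant-monotonic, so is $\TripleNormDual{\cdot}$ (equivalence of Item~\ref{it:orthant-monotonic_IN_pr:orthant-monotonic} and Item~\ref{it:dual_orthant-monotonic_IN_pr:orthant-monotonic} in Proposition~\ref{pr:orthant-monotonic}), hence applying Item~\ref{it:projection} \emph{to the dual norm} gives $\pi_K\np{\TripleNormDualBall} = \FlatRR_{K} \cap \TripleNormDualBall$. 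Finally pass back from ball to sphere using $\FlatRR_{K} \cap \TripleNormDualBall = \convexhull\np{\FlatRR_{K} \cap \TripleNormDualSphere}$, exactly as in the proof of~\eqref{eq:K_star=sigma}. Chaining the three equalities yields~\eqref{eq:TopNorm=CoordinateNorm}. For the unit-ball formula~\eqref{eq:dual_support_norm_unit_ball}, I would rewrite $\TopNorm{\TripleNorm{\cdot}}{k} = \sigma_{\bigcup_{\cardinal{K} \leq k}\np{\FlatRR_{K} \cap \TripleNormDualSphere}} = \sigma_{\closedconvexhull\bp{\bigcup_{\cardinal{K} \leq k}\np{\FlatRR_{K} \cap \TripleNormDualSphere}}}$; since $\TopNorm{\TripleNorm{\cdot}}{k} = \sigma_{\SupportNorm{\TripleNormBall}{k}}$ by~\eqref{eq:support_norm} and~\eqref{eq:norm_dual_norm}, uniqueness of a closed convex set with given support function forces the two closed convex sets to coincide.

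For Item~2, the equivalence $\TripleNorm{\cdot} = \TopNorm{\TripleNorm{\cdot}}{d} \iff \TripleNormDual{\cdot} = \SupportNorm{\TripleNorm{\cdot}}{d}$ is immediate by taking dual norms, using $\SupportNorm{\TripleNorm{\cdot}}{d} = \bp{\TopNorm{\TripleNorm{\cdot}}{d}}_\star$ from~\eqref{eq:support_norm} and bidual invariance~\eqref{eq:bidual_norm=norm}. The crux is the equivalence with orthant-monotonicity. Because $\TopNorm{\TripleNorm{\primal}}{d} = \sup_{K \subset \ic{1,d}} \TripleNorm{\primal_K} \geq \TripleNorm{\primal}$ always holds by~\eqref{eq:generalized_top-d_norm_inequality}, the equality $\TripleNorm{\cdot} = \TopNorm{\TripleNorm{\cdot}}{d}$ is equivalent to $\TripleNorm{\primal_K} \leq \TripleNorm{\primal}$ for all $\primal \in \RR^d$ and all $K \subset \ic{1,d}$. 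I would then note this last condition is exactly Item~\ref{it:ICS}: taking $K = \ic{1,d}$ in Item~\ref{it:ICS} gives it, while applying the condition to the vector $\primal_K$ and using $\np{\primal_K}_J = \primal_J$ for $J \subset K$ recovers the full statement of Item~\ref{it:ICS}. Since Item~\ref{it:ICS} is equivalent to orthant-monotonicity in Proposition~\ref{pr:orthant-monotonic}, the equivalence follows.

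For Item~3, I would show that each map $\primal \mapsto \TripleNorm{\primal_K}$ is an orthant-monotonic seminorm on~$\RR^d$: if $\module{\primal} \leq \module{\primal'}$ and $\primal \circ \primal' \geq 0$, then $\module{\primal_K} \leq \module{\primal'_K}$ and $\primal_K \circ \primal'_K \geq 0$, so orthant-monotonicity of the source norm gives $\TripleNorm{\primal_K} \leq \TripleNorm{\primal'_K}$. The generalized top-$k$ norm is the supremum over $\cardinal{K} \leq k$ of these seminorms, and the supremum of a family of orthant-monotonic seminorms is orthant-monotonic (the fact recorded after Definition~\ref{de:orthant-monotonic}); hence $\TopNorm{\TripleNorm{\cdot}}{k}$ is orthant-monotonic. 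The generalized $k$-support norm is its dual norm by~\eqref{eq:support_norm}, and by the equivalence of Item~\ref{it:orthant-monotonic_IN_pr:orthant-monotonic} and Item~\ref{it:dual_orthant-monotonic_IN_pr:orthant-monotonic} in Proposition~\ref{pr:orthant-monotonic} the dual of an orthant-monotonic norm is orthant-monotonic, so $\SupportNorm{\TripleNorm{\cdot}}{k}$ is orthant-monotonic as well.

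The only genuinely delicate point is the sphere/ball bookkeeping in Item~1: Item~\ref{it:projection} is stated for \emph{balls}, whereas the generalized top-$k$ norm is naturally expressed through projected \emph{spheres}, so the two convex-hull passages (sphere to ball and back) must be carried out carefully, and one must apply Item~\ref{it:projection} to $\TripleNormDual{\cdot}$ rather than to $\TripleNorm{\cdot}$. Items~2 and~3 are then routine consequences of results already in the excerpt.
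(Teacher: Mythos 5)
Your proposal is correct and follows essentially the same route as the paper: Items~2 and~3 are argued identically, and for Item~1 you apply Proposition~\ref{pr:orthant-monotonic} to the dual norm~$\TripleNormDual{\cdot}$ exactly as the paper does, merely entering through the equivalent characterization of Item~\ref{it:projection} (the set identity \( \pi_K\np{\TripleNormDualBall} = \FlatRR_{K} \cap \TripleNormDualBall \)) rather than through Item~\ref{it:K_star_=_star_K_IN_pr:orthant-monotonic} (the restriction-norm identity \( \np{ \TripleNorm{\cdot}_{\star} }_{K,\star} = \np{ \TripleNorm{\cdot}_{\star} }_{\star,K} \)) combined with~\eqref{eq:K_star=sigma}. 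Since the paper proves those two characterizations equivalent via the very sphere/ball support-function computations you reproduce, the two derivations of~\eqref{eq:TopNorm=CoordinateNorm} are interchangeable, and you correctly isolate the one delicate point, namely that Proposition~\ref{pr:orthant-monotonic} must be applied to the dual norm.
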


\begin{proof}
  \quad
  \begin{enumerate}
  \item 
    We suppose that the source norm~$\TripleNorm{\cdot}$ is orthant-monotonic.
    Let \( k\in\ic{1,d} \).

    \noindent $\bullet$ 
    We prove~\eqref{eq:TopNorm=CoordinateNorm}. 
    For any \( \primal \in \RR^d \),
    we have 
    \begin{align*}
      \TopNorm{\TripleNorm{\primal}}{k}
      &=
        \sup_{\cardinal{K} \leq k} \TripleNorm{\primal_K} 
        \tag{by definition~\eqref{eq:top_norm} of the generalized top-$k$ norm}
      \\
      &=
        \sup_{\cardinal{K} \leq k} \TripleNorm{\primal_K}_{\star\star} 
        \tag{as any norm is equal to its bidual norm by~\eqref{eq:bidual_norm=norm} } 
      \\
      &=
        \sup_{\cardinal{K} \leq k} 
        \np{ \TripleNorm{\cdot}_{\star} }_{\star, K}\np{\primal_K}
        \tag{by Definition~\ref{de:K_norm} of the the $\StarSet{K}$-norm }
      \\
      &=
        \sup_{\cardinal{K} \leq k} 
        \np{ \TripleNorm{\cdot}_{\star} }_{K, \star}\np{\primal_K}
        \intertext{by Item~\ref{it:K_star_=_star_K_IN_pr:orthant-monotonic}
        in Proposition~\ref{pr:orthant-monotonic}
        because, as the norm~$\TripleNorm{\cdot}$ is orthant-monotonic, 
        so is also the dual norm~$\TripleNormDual{\cdot}$ 
        (equivalence between Item~\ref{it:orthant-monotonic_IN_pr:orthant-monotonic}
        and Item~\ref{it:dual_orthant-monotonic_IN_pr:orthant-monotonic} 
        in Proposition~\ref{pr:orthant-monotonic})}
      &=
        \sup_{\cardinal{K} \leq k} 
        \sigma_{ \FlatRR_{K} \cap \TripleNormDualSphere } \np{\primal_K}
        \tag{by~\eqref{eq:K_star=sigma} applied to $\TripleNorm{\cdot}_{\star}$ with $\primal_K\in \FlatRR_K$}
      \\
      &=
        \sup_{\cardinal{K} \leq k} 
        \sigma_{ \FlatRR_{K} \cap \TripleNormDualSphere } \np{\primal}
    \end{align*}
    by the self-duality property~\eqref{eq:orthogonal_projection_self-dual} of 
    the projection mapping~\( \pi_K \),
    and by definition~\eqref{eq:FlatRR} of the subspace~\( \FlatRR_{K} \).
    \medskip

    \noindent $\bullet$ 
    We prove~\eqref{eq:dual_support_norm_unit_ball}. 
    Indeed, by~\eqref{eq:TopNorm=CoordinateNorm},
    we have that 
    \( \TopNorm{\TripleNorm{\cdot}}{k}
    =  \sup_{\cardinal{K} \leq k} 
    \sigma_{ \FlatRR_{K} \cap \TripleNormDualSphere } \).
    As \(  \sup_{\cardinal{K} \leq k} 
    \sigma_{ \FlatRR_{K} \cap \TripleNormDualSphere } \) 
    \( = \sigma_{ \bigcup_{ \cardinal{K} \leq k} \np{ \FlatRR_{K} \cap \TripleNormDualSphere } } \), 
    we have just established that 
    \( \TopNorm{\TripleNorm{\cdot}}{k} =
    \sigma_{ \cup_{\cardinal{K} \leq k} \np{ \FlatRR_{K} \cap \TripleNormDualSphere } } \).
    On the other hand, by~\eqref{eq:norm_dual_norm} 
    we have that \( \TopNorm{\TripleNorm{\cdot}}{k}=
    \sigma_{ \SupportNorm{\TripleNormBall}{k} } \)
    since, by Definition~\ref{de:top_norm}, the $k$-support norm
    is the dual norm of the top-$k$ norm.
    Then, by~\cite[Prop.~7.13]{Bauschke-Combettes:2017}, 
    we deduce that
    \( 
    \closedconvexhull\bp{ \SupportNorm{\TripleNormBall}{k} } 
    =
    \closedconvexhull\bp{ \bigcup_{ \cardinal{K} \leq k} \np{ \FlatRR_{K} \cap \TripleNormDualSphere } }
    \).
    As the unit ball~\( \SupportNorm{\TripleNormBall}{k} \)
    in~\eqref{eq:generalized_k-support_norm_unit_ball}
    is closed and convex, we immediately
    obtain~\eqref{eq:dual_support_norm_unit_ball}. 

  \item
    First, let us observe that, from the very definition~\eqref{eq:top_norm}
    of the generalized top-$d$ norm \( \TopNorm{\TripleNorm{\cdot}}{d} \),
    and by~\eqref{eq:generalized_top-d_norm_inequality}, we have,
    for all $\primal \in \RR^d$:
    \begin{equation}
      \TopNorm{\TripleNorm{\primal}}{d} = \TripleNorm{\primal}
      \iff 
      \sup_{\cardinal{K} \leq d} \TripleNorm{\primal_K} = \TripleNorm{\primal}
      \iff 
      \TripleNorm{\primal_K} \leq \TripleNorm{\primal}
      \eqsepv \forall K \subset \ic{1,d} 
      \eqfinp
      \label{eq:generalized_top-d_norm_equality_in_the_proof}     
    \end{equation}
    Now, we turn to prove Item~\ref{it:generalized_top-d_norm_equality}
    as two reverse implications.

    Suppose that the source norm~$\TripleNorm{\cdot}$ is orthant-monotonic,
    and let us prove that
    \( \TopNorm{\TripleNorm{\primal}}{d} = \TripleNorm{\primal} \).
    By Item~\ref{it:ICS} in Proposition~\ref{pr:orthant-monotonic},
    we get that \( \TripleNorm{\primal_K} \leq \TripleNorm{\primal} \),
    for all \( K \subset \ic{1,d} \),
    hence \( \TopNorm{\TripleNorm{\primal}}{d} = \TripleNorm{\primal} \),
    for all $\primal \in \RR^d$
    by the just proven equivalence~\eqref{eq:generalized_top-d_norm_equality_in_the_proof}. 

    Suppose that \( \TopNorm{\TripleNorm{\primal}}{d}
    = \TripleNorm{\primal} \) 
    and let us prove that
    the source norm~$\TripleNorm{\cdot}$ is orthant-monotonic.
    By~\eqref{eq:generalized_top-d_norm_equality_in_the_proof}, we have that 
    \( \TripleNorm{\primal_J} \leq \TripleNorm{\primal} \),
    for all $\primal \in \RR^d$ and all \( J \subset \ic{1,d} \).
    This gives, in particular, 
    \( \TripleNorm{\primal_{K \cap J}}=\TripleNorm{\np{\primal_K}_J}
    \leq \TripleNorm{\primal_K} \);
    if \( J \subset K \), we deduce that 
    \( \TripleNorm{\primal_J}  \leq \TripleNorm{\primal_K} \).
    Thus, Item~\ref{it:ICS} in Proposition~\ref{pr:orthant-monotonic} holds true,
    and we obtain that the source norm~$\TripleNorm{\cdot}$ is orthant-monotonic.

    We end the proof by taking the dual norms, as in~\eqref{eq:dual_norm}, of both sides of the equality
    \( \TripleNorm{\cdot} =\TopNorm{\TripleNorm{\cdot}}{d} \),
    yielding 
    \( \TripleNormDual{\cdot} =\SupportNorm{\TripleNorm{\cdot}}{d} \)
    by~\eqref{eq:support_norm}. 
  \item 
    The generalized top-$k$ norm in~\eqref{eq:top_norm} 
    is the supremum of the subfamily,
    when $\cardinal{K} \leq k$, of the 
    seminorms~\( \TripleNorm{\pi_K\np{\cdot}}_{K} \).
    As already mentioned, the definition of orthant-monotonic norms
    can be extended to seminorms. With this extension, it is easily seen that the 
    seminorms~\( \TripleNorm{\pi_K\np{\cdot}}_{K} \) are orthant-monotonic
    as soon as the source norm~$\TripleNorm{\cdot}$ is orthant-monotonic.
    Therefore, if the source norm~$\TripleNorm{\cdot}$ is orthant-monotonic,
    so is the supremum in~\eqref{eq:top_norm}, 
    thanks to the property
    claimed right after the Definition~\ref{de:orthant-monotonic}:
    the supremum of a family of orthant-monotonic seminorms is an
    orthant-monotonic seminorm.
    Thus, we have established that the generalized top-$k$ norm in~\eqref{eq:top_norm} 
    is orthant-monotonic.
    We deduce that its dual norm, 
    the generalized $k$-support norm~\( \SupportNorm{\TripleNorm{\cdot}}{k} \)
    in~\eqref{eq:support_norm},
    is orthant-monotonic. Indeed, the dual norm
    of an orthant-monotonic norm~$\TripleNorm{\cdot}$ is orthant-monotonic, 
    as proved in~\cite[Theorem~2.23]{Gries:1967} 
    (equivalence between Item~\ref{it:orthant-monotonic_IN_pr:orthant-monotonic}
    and Item~\ref{it:dual_orthant-monotonic_IN_pr:orthant-monotonic} 
    in Proposition~\ref{pr:orthant-monotonic}).
  \end{enumerate}
  \medskip

  This ends the proof.
\end{proof}

\section{The \lzeropseudonorm, orthant-monotonicity and generalized top-$k$ and $k$-support norms}
\label{The_lzeropseudonorm_orthant-monotonicity_and_generalized_top-k_and_k-support_norms}

In~\S\ref{The_lzeropseudonorm_and_its_level_sets}, 
we introduce basic notation regarding the \lzeropseudonorm.
In~\S\ref{Graded_sequences_of_norms},
we introduce the notions of (strictly) increasingly or decreasingly graded sequences of norms,
and we display conditions for generalized top-$k$ norms or 
generalized $k$-support norms to be graded sequences.

\subsection{Level sets of the \lzeropseudonorm}
\label{The_lzeropseudonorm_and_its_level_sets}

The so-called \emph{\lzeropseudonorm} is the function
\( \lzero : \RR^d \to \ic{0,d} \)
defined, for any \( \primal \in \RR^d \), by
\begin{equation}
  \lzero\np{\primal} = \cardinal{ \Support{\primal} }
  = \textrm{number of nonzero components of } \primal
  \eqfinp
  \label{eq:pseudo_norm_l0}  
\end{equation}
The \lzeropseudonorm\ shares three out of the four axioms of a norm:
nonnegativity, positivity except for \( \primal =0 \), subadditivity.
The axiom of 1-homogeneity does not hold true;
by contrast, the \lzeropseudonorm\ is 0-homogeneous:
\begin{equation}
  \lzero\np{\rho\primal} = \lzero\np{\primal} 
  \eqsepv \forall \rho \in \RR\backslash\{0\}
  \eqsepv \forall \primal \in \RR^d
  \eqfinp
  \label{eq:lzeropseudonorm_is_0-homogeneous}
\end{equation}
We introduce the \emph{level sets}
\begin{equation}
  \LevelSet{\lzero}{k} 
  = 
  \bset{ \primal \in \RR^d }{ \lzero\np{\primal} \leq k }
  \eqsepv \forall k\in\ic{0,d} 
  \eqfinp
  \label{eq:pseudonormlzero_level_set}
\end{equation}
The level sets of the \lzeropseudonorm\
in~\eqref{eq:pseudonormlzero_level_set}
are easily related to the subspaces~\( \FlatRR_{K} \) of~\( \RR^d \),
as defined in~\eqref{eq:FlatRR}, by
\begin{equation}
  \LevelSet{\lzero}{k} 
  = 
  \defset{ \primal \in \RR^d }{ \lzero\np{\primal} \leq k }
  = \bigcup_{\cardinal{K} \leq k} \FlatRR_{K} 
  \eqsepv \forall k\in\ic{0,d} 
  \eqfinv
  \label{eq:level_set_pseudonormlzero}
\end{equation}
where the notation \( \bigcup_{\cardinal{K} \leq k} \)
is a shorthand for 
\( \bigcup_{ { K \subset \ic{1,d}, \cardinal{K} \leq k}} \).

If the source norm~$\TripleNorm{\cdot}$ is orthant-monotonic,
the expression~\eqref{eq:dual_support_norm_unit_ball} of
the unit ball of the $k$-support norm can be written
with the level sets of the \lzeropseudonorm\ as 
\begin{equation}
  \SupportNorm{\TripleNormBall}{k} 
  = \closedconvexhull\bp{ \bigcup_{ \cardinal{K} \leq k} 
    \np{ \FlatRR_{K} \cap \TripleNormDualSphere } }
  = \closedconvexhull\bp{ \LevelSet{\lzero}{k} \cap \TripleNormDualSphere }
  \eqfinp
  \label{eq:dual_support_norm_unit_ball_bis}
\end{equation}
This formula is reminiscent of (and generalizes)
\cite[Equation~(2)]{Argyriou-Foygel-Srebro:2012},
which was established for the Euclidean source norm.
With an additional assumption, we obtain a refinement.
The proof of the following 
Proposition~\ref{pr:level_set_pseudonormlzero_intersection_sphere_rotund} 
relies on 
Lemma~\ref{lemma:convex_env} and its Corollary~\ref{cor:convex_env}.

\begin{proposition}  
  \label{pr:level_set_pseudonormlzero_intersection_sphere_rotund}
  If the source norm~$\TripleNorm{\cdot}$ is orthant-monotonic
  and if the normed space 
  \( \bp{\RR^d,\TripleNormDual{\cdot}} \) is strictly convex,
  then we have
  \begin{equation}
    \LevelSet{\lzero}{k} \cap \TripleNormDualSphere 
    =
    \SupportNorm{\TripleNormBall}{k} \cap \TripleNormDualSphere 
    \eqsepv \forall k \in \ic{0,d} 
    \eqfinv
    \label{eq:level_set_l0_inter_sphere_b}
  \end{equation}
  where \( \LevelSet{\lzero}{k} \) is the level set
  in~\eqref{eq:pseudonormlzero_level_set}
  of the \lzeropseudonorm\ in~\eqref{eq:pseudo_norm_l0},
  where $\TripleNormDualSphere$ in~\eqref{eq:triplenorm_Dual_unit_sphere}
  is the unit sphere of the 
  dual norm~$\TripleNormDual{\cdot}$,
  and where \( \SupportNorm{\TripleNormBall}{k} \) 
  in~\eqref{eq:generalized_k-support_norm_unit_ball} is the unit ball
  of the generalized $k$-support norm~\( \SupportNorm{\TripleNorm{\cdot}}{k} \).
\end{proposition}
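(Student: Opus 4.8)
The plan is to reduce both inclusions to the description of the unit ball $\SupportNorm{\TripleNormBall}{k} = \closedconvexhull\bp{\LevelSet{\lzero}{k} \cap \TripleNormDualSphere}$ supplied by~\eqref{eq:dual_support_norm_unit_ball_bis}, which holds precisely because the source norm is orthant-monotonic. Throughout I would abbreviate $S = \LevelSet{\lzero}{k} \cap \TripleNormDualSphere$, so that $\SupportNorm{\TripleNormBall}{k} = \closedconvexhull(S)$, and I would record at the outset that $S$ is \emph{compact}: it is the intersection of the closed set $\LevelSet{\lzero}{k} = \bigcup_{\cardinal{K}\leq k}\FlatRR_{K}$ (a finite union of subspaces, hence closed) with the compact dual sphere $\TripleNormDualSphere$.

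The inclusion $\LevelSet{\lzero}{k}\cap\TripleNormDualSphere \subseteq \SupportNorm{\TripleNormBall}{k}\cap\TripleNormDualSphere$ is then immediate and needs no strict convexity: every element of $S$ lies both in $\TripleNormDualSphere$ and in $\closedconvexhull(S) = \SupportNorm{\TripleNormBall}{k}$.

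For the reverse inclusion I would take $y \in \SupportNorm{\TripleNormBall}{k}\cap\TripleNormDualSphere = \closedconvexhull(S)\cap\TripleNormDualSphere$ and aim to show $y \in S$, equivalently $\lzero(y)\leq k$. Since $S \subseteq \TripleNormDualSphere \subseteq \TripleNormDualBall$ and $\TripleNormDualBall$ is closed and convex, we get $\closedconvexhull(S) \subseteq \TripleNormDualBall$. Now $y \in \TripleNormDualSphere$ is a boundary point of the dual unit ball, and the hypothesis that $\bp{\RR^d,\TripleNormDual{\cdot}}$ is strictly convex means $\TripleNormDualBall$ is rotund, so $y$ is an extreme point of $\TripleNormDualBall$. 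Because $\closedconvexhull(S)\subseteq\TripleNormDualBall$ and $y\in\closedconvexhull(S)$, being extreme in the larger set forces $y$ to be extreme in $\closedconvexhull(S)$ as well: any proper convex decomposition of $y$ inside $\closedconvexhull(S)$ would be one inside $\TripleNormDualBall$, which is impossible.

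The decisive step is then to transfer this extreme point back into the generating set $S$. Here I would invoke the converse of the Krein--Milman theorem (Milman's theorem), which I expect Lemma~\ref{lemma:convex_env} and its Corollary~\ref{cor:convex_env} to package: for a compact set $S$, every extreme point of $\closedconvexhull(S)$ already belongs to $S$. Applying it yields $y \in S = \LevelSet{\lzero}{k}\cap\TripleNormDualSphere$, whence $\lzero(y)\leq k$ and $y \in \LevelSet{\lzero}{k}\cap\TripleNormDualSphere$, completing the argument. I expect this last transfer to be the main obstacle: it is exactly where both the compactness of $S$ and the strict convexity of the dual ball are genuinely used, and the purpose of the preparatory Lemma and Corollary is to make the extreme-point argument rigorous in finite dimensions.
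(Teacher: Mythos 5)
Your proposal is correct and follows essentially the same route as the paper: the paper likewise reduces the statement to the identity $\LevelSet{\lzero}{k}\cap\TripleNormDualSphere = \closedconvexhull\bp{\LevelSet{\lzero}{k}\cap\TripleNormDualSphere}\cap\TripleNormDualSphere$ via Corollary~\ref{cor:convex_env} (whose proof, through Lemma~\ref{lemma:convex_env}, is exactly your extreme-point argument: rotundity makes points of the dual sphere extreme in $\TripleNormDualBall$, hence extreme in $\closedconvexhull(S)$, and a Milman-type fact returns them to the compact generator $S$), and then identifies $\closedconvexhull(S)$ with $\SupportNorm{\TripleNormBall}{k}$ using orthant-monotonicity via~\eqref{eq:dual_support_norm_unit_ball}. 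The only cosmetic difference is that you unfold the Lemma/Corollary inline and cite Milman's theorem directly, where the paper cites the fact that the convex hull operation creates no new extreme points.
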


\begin{proof}
  First, let us observe that the level set \( \LevelSet{\lzero}{k} \) 
  in~\eqref{eq:pseudonormlzero_level_set} is closed
  because the pseudonorm~$\lzero$ is lower semi continuous. Then, we get
  \begin{align*}
    \LevelSet{\lzero}{k} \cap \TripleNormDualSphere 
    &= 
      \closedconvexhull\bp{\LevelSet{\lzero}{k} \cap \TripleNormDualSphere} 
      \cap \TripleNormDualSphere 
      \tag{by Corollary~\ref{cor:convex_env} because
      \( \LevelSet{\lzero}{k} \cap \TripleNormDualSphere \subset \TripleNormDualSphere \) 
      and is closed, and because the unit ball~$\TripleNormDualBall$ is rotund }
    \\
    &= 
      \closedconvexhull\bp{ \bigcup_{ {\cardinal{K} \leq k}} 
      \np{ \FlatRR_{K} \cap \TripleNormDualSphere } }
      \cap \TripleNormDualSphere 
      \tag{as \( \LevelSet{\lzero}{k} = \bigcup_{\cardinal{K} \leq k} \FlatRR_{K} \)
      by~\eqref{eq:level_set_pseudonormlzero} }
    \\
    &= 
      \SupportNorm{\TripleNormBall}{k} \cap \TripleNormDualSphere 
  \end{align*}
  as 
  \( \closedconvexhull\bp{ \bigcup_{ {\cardinal{K} \leq k}} \np{ \FlatRR_{K} \cap \TripleNormDualSphere } }
  = \SupportNorm{\TripleNormBall}{k} \) 
  by~\eqref{eq:dual_support_norm_unit_ball} because the source norm~$\TripleNorm{\cdot}$ is orthant-monotonic.
  \medskip

  This ends the proof. 
\end{proof}

\bleue{%
  The result of
  Proposition~\ref{pr:level_set_pseudonormlzero_intersection_sphere_rotund}
  applies to the $\ell_p$-norm~$\norm{\cdot}_{p}$ for $p\in ]1,\infty[$. 
}

\begin{lemma}   
  \label{lemma:convex_env}
  Let $\TripleNorm{\cdot}$ be a norm on~$\RR^d$.
  Let $\subsetextr$ be a subset of 
  $\mathrm{extr}(\TripleNormBall) \subset \TripleNormSphere$, 
  the set of extreme points of~$\TripleNormBall$.
  If $A$ is a subset of $\subsetextr$, 
  then $A = \convexhull(A) \cap \subsetextr$. 
  If $A$ is a closed subset of $\subsetextr$,
  then $A = \closedconvexhull(A) \cap \subsetextr$.
\end{lemma}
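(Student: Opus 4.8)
The plan is to prove the two assertions of Lemma~\ref{lemma:convex_env} by exploiting the defining property of extreme points: an extreme point of a convex set cannot be written as a nontrivial convex combination of points of that set. The lemma states that for a subset $A$ of a collection $\subsetextr$ of extreme points of the unit ball~$\TripleNormBall$, one recovers $A$ by intersecting its convex hull (or closed convex hull) with $\subsetextr$.

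First I would treat the convex hull case. The inclusion $A \subset \convexhull(A) \cap \subsetextr$ is immediate, since $A \subset \convexhull(A)$ and $A \subset \subsetextr$ by hypothesis. For the reverse inclusion, I would take a point $\primal \in \convexhull(A) \cap \subsetextr$ and show $\primal \in A$. Since $\primal \in \convexhull(A)$, we can write $\primal = \sum_{i} \lambda_i \primal^{(i)}$ as a finite convex combination of points $\primal^{(i)} \in A$ with $\lambda_i > 0$ and $\sum_i \lambda_i = 1$. The key observation is that $\primal$ is an extreme point of~$\TripleNormBall$: indeed $\primal \in \subsetextr \subset \mathrm{extr}(\TripleNormBall)$, and every $\primal^{(i)} \in A \subset \subsetextr \subset \TripleNormBall$ lies in the unit ball. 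An extreme point of~$\TripleNormBall$ that is expressed as a convex combination of points of~$\TripleNormBall$ must coincide with each of those points appearing with positive weight; hence $\primal = \primal^{(i)}$ for every $i$, so $\primal \in A$.

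Next I would handle the closed convex hull case. The inclusion $A \subset \closedconvexhull(A) \cap \subsetextr$ is again immediate. For the reverse inclusion, take $\primal \in \closedconvexhull(A) \cap \subsetextr$. Since $\RR^d$ is finite-dimensional and $A$ is contained in the compact set~$\TripleNormSphere$, the closed convex hull $\closedconvexhull(A)$ equals the convex hull of the closure $\overline{A}$; when $A$ is closed this is $\convexhull(A)$ together with its closure. To stay clean, I would invoke that in finite dimension the closed convex hull of a compact set is compact and equals the convex hull of that set (by Carath\'eodory's theorem, the convex hull of a compact set in~$\RR^d$ is already compact, hence closed). Since $A$ is a closed subset of the compact sphere~$\TripleNormSphere$, it is compact, so $\closedconvexhull(A) = \convexhull(A)$, and the claim reduces to the first part already proved.

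\textbf{The main obstacle} is the finite-dimensional topological subtlety in the closed-convex-hull case: one must justify that $\closedconvexhull(A) = \convexhull(A)$ for the closed (hence compact) set $A$, so that the extreme-point argument from the first part applies verbatim. This rests on the fact that, in~$\RR^d$, the convex hull of a compact set is compact (Carath\'eodory), which is exactly where finite-dimensionality is used; without compactness of $A$ one could only assert $\primal \in \closedconvexhull(A)$ and would need a limiting argument to decompose $\primal$ as an honest finite convex combination, which extreme-point reasoning does not tolerate. Once compactness of~$A$ is secured, both parts collapse to the same elementary extreme-point computation.
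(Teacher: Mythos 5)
Your proof is correct, and the closed-hull part coincides with the paper's argument (closed subset of the compact sphere, hence compact; Carath\'eodory/Rockafellar gives compactness of $\convexhull(A)$ in finite dimension, so $\closedconvexhull(A)=\convexhull(A)$ and the case reduces to the first part). For the first part you take a mildly different and somewhat more direct route: you write $\primal\in\convexhull(A)\cap\subsetextr$ as an explicit finite convex combination $\sum_i\lambda_i\primal^{(i)}$ of points of $A\subset\TripleNormBall$ and invoke the fact that an extreme point of $\TripleNormBall$ expressed as such a combination must equal every point carrying positive weight. The paper instead argues by contradiction with two-point decompositions, first showing $\convexhull(A)\cap\subsetextr\subset\mathrm{extr}\bp{\convexhull(A)}$ (ruling out $y$ or $z$ off the sphere via the triangle inequality, then using extremality in the ball) and then citing the external fact $\mathrm{extr}\bp{\convexhull(A)}\subset A$. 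Your version is more self-contained, since it avoids that citation; the one thing you use without proof is the standard upgrade of the two-point definition of extremality to finite convex combinations, which is an easy induction (if some $\primal^{(1)}\neq\primal$ with $\lambda_1\in]0,1[$, group the remaining terms into a single point $y$ of the ball and note $y\neq\primal$, contradicting two-point extremality), so this is a routine omission rather than a gap.
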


\begin{proof} 
  We first prove that $A = \convexhull(A) \cap \subsetextr$ when $A \subset \subsetextr$. 
  Since $A \subset \convexhull(A)$ and $A\subset \subsetextr$, 
  we immediately get that $A \subset\convexhull(A) \cap \subsetextr$.
  To prove the reverse inclusion, we first start by proving that 
  $\convexhull(A)\cap \subsetextr \subset \mathrm{extr}\bp{\convexhull(A)}$,
  the set of extreme points of~$\convexhull(A)$.
  
  The proof is by contradiction.
  Suppose indeed that there exists $\primal\in \convexhull(A)\cap \subsetextr$ 
  and $x\not\in \mathrm{extr}\bp{\convexhull(A)}$. Then, by definition of an
  extreme point, we could find 
  $y \in \convexhull(A)$ and $z \in \convexhull(A)$, distinct from $\primal$,
  and such that $\primal = \lambda y + (1-\lambda) z$ for some $\lambda\in ]0,1[$.
  Notice that necessarily \( y \neq z \) (because, else, we would have
  $\primal=y=z$ which would contradict $y \neq \primal$ and $z \neq \primal$). 
  By assumption $A\subset \subsetextr$, 
  we deduce that $\convexhull(A) \subset \convexhull(\subsetextr)
  \subset \convexhull(\TripleNormSphere)= \TripleNormBall= 
  \defset{\primal \in \RR^d}{\TripleNorm{\primal} \leq 1}$, the unit ball, 
  and therefore that \( \TripleNorm{y} \leq 1 \) and \( \TripleNorm{z} \leq 1 \).
  If $y$ or $z$ were not in $\TripleNormSphere$ --- that is, if either
  \( \TripleNorm{y} < 1 \) or \( \TripleNorm{z} < 1 \) --- then we would obtain that
  \( \TripleNorm{\primal} \leq \lambda \TripleNorm{y} 
  + (1-\lambda) \TripleNorm{z} < 1 \) 
  since 
  $\lambda\in ]0,1[$;
  we would thus arrive at a contradiction 
  since $\primal$ could not be in the sphere~$\TripleNormSphere$  
  and thus not in $\subsetextr$. 
  Thus, both $y$ and $z$ must be in $\TripleNormSphere$, 
  and we have a contradiction. Indeed, by assumption
  that $\subsetextr$ is a subset of $\mathrm{extr}(\TripleNormSphere)$,
  no $\primal \in \subsetextr$ can be obtained as a convex combination of 
  $y \in \TripleNormSphere\backslash\{\primal\}$ 
  and $z \in \TripleNormSphere\backslash\{\primal\}$, with \( y \neq z \).

  Hence, we have proved by contradiction that 
  $\convexhull(A)\cap \subsetextr \subset \mathrm{extr}\bp{\convexhull(A)}$.
  We can conclude using the fact that 
  $\mathrm{extr}\bp{\convexhull(A)} \subset A$, because
  the convex closure operation cannot generate new extreme points,
  as proved in~\cite[Exercice 6.4]{hiriart1998optimisation}.
  \medskip

  Now, we consider the case where the subset $A$ of~$\subsetextr$
  is closed.  Using the first part of the proof we have that
  $A= \convexhull(A) \cap \subsetextr$.  
  Now, $A$ is closed by assumption and bounded
  since $A\subset \subsetextr \subset \TripleNormSphere$. 
  Thus, $A$ is a compact subset of~$\RR^d$ and, in a finite
  dimensional space, we get that 
  $\convexhull(A)$ is compact~\cite[Theorem~17.2]{Rockafellar:1970}, 
  thus closed. We conclude that 
  $A= \convexhull(A) \cap \subsetextr = 
  \overline{\convexhull(A)} \cap \subsetextr
  = \closedconvexhull(A) \cap \subsetextr$, 
  where the last equality comes 
  from~\cite[Prop.~3.46]{Bauschke-Combettes:2017}.

  This ends the proof. 
\end{proof}

If the unit ball~$\TripleNormBall$ is rotund, we then have that
$\TripleNormSphere=\mathrm{extr}(\TripleNormBall)$, and we can apply 
Lemma~\ref{lemma:convex_env} with $\subsetextr=\TripleNormSphere$ 
to obtain the following corollary.

\begin{corollary}  
  Let $\TripleNorm{\cdot}$ be a norm on~$\RR^d$.
  Suppose that the unit ball of the norm~$\TripleNorm{\cdot}$ is rotund.
  If $A$ is a subset of the unit sphere~$\TripleNormSphere$, 
  then $A = \convexhull(A) \cap \TripleNormSphere$. 
  If $A$ is a closed subset of $\TripleNormSphere$,
  then $A = \closedconvexhull(A) \cap \TripleNormSphere$.
  \label{cor:convex_env}
\end{corollary}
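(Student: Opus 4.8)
The plan is to obtain this corollary as an immediate specialization of Lemma~\ref{lemma:convex_env}, taking $\subsetextr = \TripleNormSphere$. Since the lemma already does all the substantive work, the only thing I would need to verify is that this particular choice of $\subsetextr$ is admissible, i.e.\ that $\TripleNormSphere$ is indeed a subset of $\mathrm{extr}(\TripleNormBall)$, which is precisely what the rotundity hypothesis provides.

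First I would unfold the definition of rotundity recalled just before the statement: the assumption that the unit ball~$\TripleNormBall$ is rotund means exactly that every point of the unit sphere~$\TripleNormSphere$ is an extreme point of~$\TripleNormBall$, that is, $\TripleNormSphere \subset \mathrm{extr}(\TripleNormBall)$. (In fact equality holds, because any extreme point of a convex body necessarily lies on its boundary, hence on~$\TripleNormSphere$; but only the inclusion $\TripleNormSphere \subset \mathrm{extr}(\TripleNormBall)$ is needed to feed the lemma.) Thus the set $\subsetextr = \TripleNormSphere$ satisfies the standing hypothesis of Lemma~\ref{lemma:convex_env}, namely $\subsetextr \subset \mathrm{extr}(\TripleNormBall)$.

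Then I would simply invoke the two assertions of Lemma~\ref{lemma:convex_env} with $\subsetextr = \TripleNormSphere$. For an arbitrary subset $A$ of the unit sphere, so that $A \subset \subsetextr = \TripleNormSphere$, the first assertion yields $A = \convexhull(A) \cap \TripleNormSphere$; for a closed subset $A$ of $\TripleNormSphere$, the second assertion yields $A = \closedconvexhull(A) \cap \TripleNormSphere$. These are exactly the two conclusions of the corollary, which ends the proof.

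The main obstacle is essentially nonexistent at this stage: the delicate points --- that a point of $\subsetextr$ lying in $\convexhull(A)$ must be an extreme point of $\convexhull(A)$, and that the convex-hull operation generates no new extreme points, together with the compactness argument needed to pass to the closed convex hull in the closed case --- have all already been handled inside Lemma~\ref{lemma:convex_env}. The only care required is to state the rotundity hypothesis in the form $\TripleNormSphere \subset \mathrm{extr}(\TripleNormBall)$ so that $\TripleNormSphere$ qualifies as an admissible $\subsetextr$.
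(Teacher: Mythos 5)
Your proposal is correct and follows exactly the paper's own route: the paper likewise observes that rotundity gives $\TripleNormSphere=\mathrm{extr}(\TripleNormBall)$ and then applies Lemma~\ref{lemma:convex_env} with $\subsetextr=\TripleNormSphere$. Your remark that only the inclusion $\TripleNormSphere \subset \mathrm{extr}(\TripleNormBall)$ is actually needed is a accurate, if minor, refinement.
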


\subsection{Graded sequences of norms}
\label{Graded_sequences_of_norms}

In~\cite{Chancelier-DeLara:2022_CAPRA_OPTIMIZATION}, we introduced the notions
of (strictly) decreasingly graded sequences of norms.
In~\S\ref{Definitions_of_graded_sequences_of_norms},
we define (strictly) increasingly graded sequences of norms.
In~\S\ref{Sufficient_conditions_for_increasingly_graded_sequence_of_generalized_top-k_norms},
we display conditions for generalized top-$k$ norms to be 
(strictly) increasingly graded sequences.
In~\S\ref{Sufficient_conditions_for_decreasingly_graded_sequence_of_generalized_k-support_norms},
we display conditions for generalized $k$-support norms to be 
(strictly) decreasingly graded sequences.
In~\S\ref{Expressing_the_lzeropseudonorm_by_means_of_the_difference_between_two_norms},
we express the level sets of the \lzeropseudonorm\ in~\eqref{eq:pseudonormlzero_level_set}
by means of the difference between two norms.

\subsubsection{Definitions of graded sequences of norms}
\label{Definitions_of_graded_sequences_of_norms}

In a sense, a graded sequence of norms is a monotone sequence that detects 
the number of nonzero components of a vector in~$\RR^d$
when the sequence becomes stationary.

\begin{definition}
  We say that a sequence 
  \( \sequence{\TripleNorm{\cdot}_{k}}{k\in\ic{1,d}} \) of norms on~$\RR^d$
  is \emph{increasingly graded} 
  (resp. \emph{strictly increasingly graded})
  \wrt\ (with respect to) the \lzeropseudonorm\ if,
  for any \( \primal\in\RR^d \),
  one of the three following equivalent statements holds true.
  \begin{enumerate}
  \item 
    We have the implication (resp. equivalence), for any \( l\in\ic{1,d} \), 
    \begin{subequations}
      \begin{align}
        \lzero\np{\primal} = l 
        &\implies
          \TripleNorm{\primal}_{1} \leq \cdots \leq 
          \TripleNorm{\primal}_{l-1} \leq 
          \TripleNorm{{\primal}}_{l} = 
          \cdots =
          \TripleNorm{{\primal}}_{d} 
          \eqfinv
          \label{eq:increasingly_graded_a}
        \\
        \left(\right.\text{resp.} \qquad      
        \lzero\np{\primal} = l 
        &\iff 
          \TripleNorm{\primal}_{1} \leq \cdots \leq 
          \TripleNorm{\primal}_{l-1} <
          \TripleNorm{{\primal}}_{l} = 
          \cdots =
          \TripleNorm{{\primal}}_{d}
          \eqfinp 
          \left. \right)
          \label{eq:strictly_increasingly_graded_a}
      \end{align}
    \item 
      The sequence 
      \( k \in \ic{1,d} \mapsto \TripleNorm{\primal}_{k} \)
      is nondecreasing and we have the implication (resp. equivalence), for any \( l\in\ic{1,d} \), 
      \begin{align}
        \lzero\np{\primal} \leq l 
        & \implies
          \TripleNorm{{\primal}}_{l} = 
          \TripleNorm{{\primal}}_{d}
          \eqfinv
          \label{eq:increasingly_graded_b}
        \\
        \left(\right.\text{resp.} \qquad      
        \lzero\np{\primal} \leq l 
        & \iff 
          \TripleNorm{{\primal}}_{l} = 
          \TripleNorm{{\primal}}_{d}
          \quad \bp{       \iff 
          \TripleNorm{{\primal}}_{l} \leq 
          \TripleNorm{{\primal}}_{d} }
          \eqfinp
          \left. \right)
          \label{eq:strictly_increasingly_graded_b}
      \end{align}
    \item 
      The sequence 
      \( k \in \ic{1,d} \mapsto \TripleNorm{\primal}_{k} \)
      is nondecreasing and we have the inequality  (resp. equality)
      \begin{align}
        \lzero\np{\primal} 
        &\geq 
          \min \bset{k \in \ic{1,d} }%
          { \TripleNorm{{\primal}}_{k} = \TripleNorm{{\primal}}_{d} }
          \eqfinv 
          \label{eq:increasingly_graded_c}
        \\
        \left(\right.\text{resp.} \qquad      
        \lzero\np{\primal} 
        &= 
          \min \bset{k \in \ic{1,d} }%
          { \TripleNorm{{\primal}}_{k} = \TripleNorm{{\primal}}_{d} }
          \eqfinp
          \label{eq:strictly_increasingly_graded_c}
          \left. \right)
      \end{align}
      %
    \end{subequations}
  \end{enumerate}
  \label{de:increasingly_graded}
\end{definition}
These definitions of (strictly) increasingly graded mimic the ones of
(strictly) decreasingly graded in
\cite[Definition~1]{Chancelier-DeLara:2022_CAPRA_OPTIMIZATION}
(replace $\leq$ in~\eqref{eq:increasingly_graded_a} by~$\geq$,
replace $\leq$ and $<$ in~\eqref{eq:strictly_increasingly_graded_a} by~$\geq$
and $>$,
replace nondecreasing by nonincreasing in the two last items).

The property of orthant-strict monotonicity for norms, 
as introduced in Definition~\ref{de:orthant-strictly_monotonic}, 
proves especially relevant for the \lzeropseudonorm\
and sequences of generalized top-$k$ norms,
as the following Propositions~\ref{pr:increasingly_graded} 
and \ref{pr:decreasingly_graded_rotund} reveal.

\subsubsection{Sufficient conditions for increasingly graded sequence of generalized top-$k$ norms}
\label{Sufficient_conditions_for_increasingly_graded_sequence_of_generalized_top-k_norms}

We show that, when the source norm 
is orthant-(strictly) monotonic,
the sequence of induced generalized top-$k$ norms is (strictly) increasingly graded.

\begin{proposition}  
  \quad
  \begin{itemize}
  \item 
    If the norm $\TripleNorm{\cdot}$ is orthant-monotonic, 
    then the nondecreasing sequence 
    \( \bseqa{\TopNorm{\TripleNorm{\cdot}}{\LocalIndex}}{\LocalIndex\in\ic{1,d}} \)
    of generalized top-$k$ norms in~\eqref{eq:top_norm} 
    is increasingly graded with respect to the \lzeropseudonorm, that is,
    \begin{equation*}
      \lzero\np{\primal} \leq l 
      \Rightarrow 
      \TopNorm{\TripleNorm{\primal}}{l}=
      \TopNorm{\TripleNorm{\primal}}{d}
      \eqsepv \forall \primal \in \RR^d 
      \eqsepv \forall l\in\ic{0,d} 
      \eqfinp
    \end{equation*}
  \item 
    If the norm $\TripleNorm{\cdot}$ is orthant-strictly monotonic, 
    then the nondecreasing sequence 
    \( \bseqa{\TopNorm{\TripleNorm{\cdot}}{\LocalIndex}}{\LocalIndex\in\ic{1,d}} \)
    of generalized top-$k$ norms in~\eqref{eq:top_norm} 
    is strictly increasingly graded with respect to the \lzeropseudonorm, that is,
    \begin{equation*}
      \lzero\np{\primal} \leq l 
      \iff 
      \TopNorm{\TripleNorm{\primal}}{l}=
      \TopNorm{\TripleNorm{\primal}}{d}
      \eqsepv \forall \primal \in \RR^d 
      \eqsepv \forall l\in\ic{0,d} 
      \eqfinp
    \end{equation*}
  \end{itemize}
  \label{pr:increasingly_graded}
\end{proposition}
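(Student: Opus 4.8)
The plan is to handle both parts with two ingredients already in hand. The first is that the sequence of generalized top-$k$ norms is nondecreasing in $k$ (by~\eqref{eq:generalized_top-k_norm_inequalities}), so in each case only one of the two competing inequalities needs to be proved. The second is the elementary support identity $\primal_K=\primal_{K\cap\Support{\primal}}$, valid for every $K\subset\ic{1,d}$ and every $\primal\in\RR^d$ because $\primal$ vanishes off $\Support{\primal}$. On top of these I would invoke the coordinate-subspace characterizations of orthant-monotonicity (Item~\ref{it:ICS} of Proposition~\ref{pr:orthant-monotonic}) and of orthant-strict monotonicity (Item~\ref{it:SICS} of Proposition~\ref{pr:orthant-strictly_monotonic}), which are exactly tailored to comparing $\TripleNorm{\primal_J}$ with $\TripleNorm{\primal_K}$ for $J\subset K$. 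The degenerate cases $\primal=0$ and $l=0$ are trivial and I set them aside.

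For the first (orthant-monotonic) part, fix $\primal$ with $\lzero(\primal)\le l$ and put $L=\Support{\primal}$, so that $\cardinal{L}\le l$. Since the sequence is nondecreasing, only $\TopNorm{\TripleNorm{\primal}}{d}\le\TopNorm{\TripleNorm{\primal}}{l}$ remains to be shown. For an arbitrary $K\subset\ic{1,d}$ the support identity gives $\primal_K=\primal_{K\cap L}$ with $\cardinal{K\cap L}\le\cardinal{L}\le l$, whence $\TripleNorm{\primal_K}=\TripleNorm{\primal_{K\cap L}}\le\TopNorm{\TripleNorm{\primal}}{l}$ by the very definition~\eqref{eq:top_norm} of the top-$l$ norm. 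Taking the supremum over all $K$ yields $\TopNorm{\TripleNorm{\primal}}{d}\le\TopNorm{\TripleNorm{\primal}}{l}$, hence equality. Notice that this argument uses only the support identity and monotonicity; orthant-monotonicity is what additionally identifies $\TopNorm{\TripleNorm{\cdot}}{d}$ with the source norm $\TripleNorm{\cdot}$ through Item~\ref{it:generalized_top-d_norm_equality} of Proposition~\ref{pr:source_norm_orthant-monotonic_generalized_top-k_norm}, giving the statement its cleaner reading.

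For the second (orthant-strictly monotonic) part, the forward implication follows from the first part, since orthant-strict monotonicity implies orthant-monotonicity. I would prove the converse by contraposition: assuming $\lzero(\primal)=m$ with $m>l$, and writing $L=\Support{\primal}$ so that $\cardinal{L}=m$, I aim at $\TopNorm{\TripleNorm{\primal}}{l}<\TopNorm{\TripleNorm{\primal}}{d}$. Since the sequence is nondecreasing and $l\le m-1<m\le d$, it is enough to establish the single strict step $\TopNorm{\TripleNorm{\primal}}{m-1}<\TopNorm{\TripleNorm{\primal}}{m}$. The supremum in~\eqref{eq:top_norm} defining $\TopNorm{\TripleNorm{\primal}}{m-1}$ ranges over the finite family of subsets $K$ with $\cardinal{K}\le m-1$, so it is attained at some $K^\star$. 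With the support identity, $\TopNorm{\TripleNorm{\primal}}{m-1}=\TripleNorm{\primal_{K^\star}}=\TripleNorm{\primal_{K^\star\cap L}}$, and since $\cardinal{K^\star\cap L}\le m-1<m=\cardinal{L}$ we have $K^\star\cap L\subsetneq L$ together with $\primal_{K^\star\cap L}\neq\primal_L$; Item~\ref{it:SICS} of Proposition~\ref{pr:orthant-strictly_monotonic} then gives $\TripleNorm{\primal_{K^\star\cap L}}<\TripleNorm{\primal_L}\le\TopNorm{\TripleNorm{\primal}}{m}$, as desired.

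The step I expect to be the crux, in the strict part, is the passage from the term-by-term strict inequality $\TripleNorm{\primal_K}<\TripleNorm{\primal_L}$, valid for every $K$ with $\cardinal{K}\le m-1$, to a strict inequality for their supremum: a supremum of numbers each lying strictly below a bound may well equal that bound. The resolution I am counting on is that this supremum is in fact a maximum, because it is taken over finitely many subsets of $\ic{1,d}$, so the strict inequality may legitimately be invoked at a maximizer $K^\star$ rather than attempted in a limit. Everything else---the support identity, the reduction to a single strict step via the nondecreasing property, and the verification that $J=K^\star\cap L$ is a proper subset of $L$ on which $\primal$ does not vanish, so that Item~\ref{it:SICS} genuinely applies---should be routine.
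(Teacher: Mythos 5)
Your proof is correct, and its skeleton (the support identity $\primal_K=\primal_{K\cap L}$, the nondecreasingness of the sequence, and the coordinate-subspace characterizations) is the same as the paper's; but you streamline it in two ways worth noting. For the first bullet, the paper argues via the sandwich $\TripleNorm{\primal}\leq\TopNorm{\TripleNorm{\primal}}{l}\leq\TopNorm{\TripleNorm{\primal}}{d}=\TripleNorm{\primal}$, where the final equality is Item~\ref{it:generalized_top-d_norm_equality} of Proposition~\ref{pr:source_norm_orthant-monotonic_generalized_top-k_norm} and genuinely uses orthant-monotonicity; your direct bound $\TripleNorm{\primal_K}=\TripleNorm{\primal_{K\cap L}}\leq\TopNorm{\TripleNorm{\primal}}{l}$ for every $K$ shows that the displayed implication (and hence increasing gradedness, since the sequence is always nondecreasing) holds for an \emph{arbitrary} source norm --- a sharpening the paper does not record, orthant-monotonicity being needed only to identify $\TopNorm{\TripleNorm{\cdot}}{d}$ with $\TripleNorm{\cdot}$. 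For the second bullet, the paper establishes the full strict chain $\TopNorm{\TripleNorm{\primal}}{k}<\TopNorm{\TripleNorm{\primal}}{k+1}$ for all $k\in\ic{0,l-1}$ with $l=\lzero\np{\primal}$ (characterization~\eqref{eq:strictly_increasingly_graded_a}), whereas you prove only the single strict step $\TopNorm{\TripleNorm{\primal}}{m-1}<\TopNorm{\TripleNorm{\primal}}{m}$ needed for characterization~\eqref{eq:strictly_increasingly_graded_b}; since Definition~\ref{de:increasingly_graded} declares the characterizations equivalent, this is enough and is slightly more economical. Finally, the point you flag as the crux --- that a strict inequality between suprema must be justified by attainment over the finite family of subsets of $\ic{1,d}$ --- is exactly the step the paper passes over implicitly when it writes a strict inequality between two suprema in its chain of (in)equalities; making the maximizer $K^\star$ explicit is a small but genuine improvement in rigor.
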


\begin{proof} 

  \noindent $\bullet$
  We suppose that the norm $\TripleNorm{\cdot}$ is orthant-monotonic.
  As the sequence 
  \( \bseqa{\TopNorm{\TripleNorm{\cdot}}{\LocalIndex}}{\LocalIndex\in\ic{1,d}} \)
  of generalized top-$k$ norms in~\eqref{eq:top_norm} 
  is nondecreasing
  by the inequalities~\eqref{eq:generalized_top-k_norm_inequalities},
  it suffices to show~\eqref{eq:increasingly_graded_b} ---
  that is, \( \lzero\np{\primal} \leq l 
  \Rightarrow
  \TopNorm{\TripleNorm{\primal}}{d}
  = \TopNorm{\TripleNorm{\primal}}{l} \) ---
  to prove that the sequence
  is increasingly graded with respect to the \lzeropseudonorm.

  For this purpose, we consider \( \primal\in\RR^d \),
  we put \( L=\Support{\primal} \) and we suppose that 
  $\lzero(\primal)=\cardinal{L} \leq l $.
  We now show that \( \TopNorm{\TripleNorm{\primal}}{d}
  = \TopNorm{\TripleNorm{\primal}}{l} \).
  Since \( \primal=\primal_L \), we have 
  \( \TripleNorm{\primal} = \TripleNorm{\primal_L} = \TripleNorm{\primal_L}_L
  \leq \TopNorm{\TripleNorm{\primal}}{l} \), by the very definition~\eqref{eq:top_norm}
  of the generalized top-$l$ norm \( \TopNorm{\TripleNorm{\cdot}}{l} \).
  On the one hand, we have just obtained that 
  \( \TripleNorm{\primal} \leq \TopNorm{\TripleNorm{\primal}}{l} \).
  On the other hand, we have that 
  \(       \TopNorm{\TripleNorm{\primal}}{l} \leq 
  \TopNorm{\TripleNorm{\primal}}{l+1} \leq \cdots \leq 
  \TopNorm{\TripleNorm{\primal}}{d} =  \TripleNorm{\primal} \)
  by the inequalities~\eqref{eq:generalized_top-k_norm_inequalities}
  and the last equality comes from Item~\ref{it:generalized_top-d_norm_equality}
  in Proposition~\ref{pr:source_norm_orthant-monotonic_generalized_top-k_norm}
  since the norm~$\TripleNorm{\cdot}$ is orthant-monotonic.
  Hence, we deduce that 
  \( \TripleNorm{\primal} = \TopNorm{\TripleNorm{\primal}}{d}
  = \cdots = \TopNorm{\TripleNorm{\primal}}{l} \), so that 
  $\TopNorm{\TripleNorm{\primal}}{k}$ is stationary for $k\ge l$.
  \medskip

  \noindent $\bullet$
  We suppose that the norm $\TripleNorm{\cdot}$ is orthant-strictly monotonic.
  To prove that the equivalence~\eqref{eq:strictly_increasingly_graded_a} holds true
  for the sequence 
  \( \bseqa{\TopNorm{\TripleNorm{\cdot}}{\LocalIndex}}{\LocalIndex\in\ic{1,d}} \),
  it is easily seen that it suffices to show that
  \begin{equation}
    \lzero\np{\primal} = l \Rightarrow
    \TopNorm{\TripleNorm{\primal}}{1} < \cdots
    < \TopNorm{\TripleNorm{\primal}}{l-1} < \TopNorm{\TripleNorm{\primal}}{l} 
    =  \TopNorm{\TripleNorm{\primal}}{l+1}
    = \cdots = \TopNorm{\TripleNorm{\primal}}{d} 
    \eqsepv \forall \primal\in\RR^d 
    \eqfinp
  \end{equation}
  We consider \( \primal\in\RR^d \).
  We put \( L=\Support{\primal} \) and we suppose that 
  $\lzero(\primal)=\cardinal{L} = l $.
  As the norm $\TripleNorm{\cdot}$ is orthant-strictly monotonic,
  it is orthant-monotonic, so that the equalities 
  \( \TopNorm{\TripleNorm{\primal}}{l} 
  =  \TopNorm{\TripleNorm{\primal}}{l+1}
  = \cdots = \TopNorm{\TripleNorm{\primal}}{d} \) above hold true
  (as just established in the first part of the proof).
  Therefore, it only remains to prove that 
  \( \TopNorm{\TripleNorm{\primal}}{1} < \cdots
  < \TopNorm{\TripleNorm{\primal}}{l-1} < \TopNorm{\TripleNorm{\primal}}{l} \).

  There is nothing to show for~$l=0$.
  Now, for \( l \geq 1 \) and for any \( k \in \ic{0,l-1} \), we have
  \begin{align*}
    \TopNorm{\TripleNorm{\primal}}{k}  
    &=
      \sup_{\cardinal{K} \leq k} 
      \TripleNorm{\primal_{K}}
      \tag{by definition~\eqref{eq:top_norm} of the generalized top-$k$ norm } 
    \\
    &=
      \sup_{\cardinal{K} \leq k} 
      \TripleNorm{\primal_{K \cap L}}
      \tag{because \( \primal_L=\primal \) by definition of the set~$L=\Support{\primal}$ }
    \\
    &=
      \sup_{\cardinal{K'} \leq k, K' \subset L} 
      \TripleNorm{\primal_{K'}}
      \tag{by setting \( K'=K \cap L \) }
    \\
    &=
      \sup_{\cardinal{K} \leq k, K \subset L} 
      \TripleNormDual{\dual_{K}}
      \tag{the same but with $K$ instead of $K'$}
    \\
    &=
      \sup_{\cardinal{K} \leq k, K \subsetneq L } 
      \TripleNorm{\primal_{K}}
      \tag{because \( \cardinal{K} \leq k \leq l-1 < l =\cardinal{L} \)
      implies that \( K \neq L \)}
    \\
    &< 
      \sup_{\substack{\cardinal{K} \leq k, \LocalIndex \in L\setminus K \\ K \subsetneq L} } 
    \TripleNorm{\primal_{K \cup \na{\LocalIndex}}}
    \intertext{%
    because the set \( L \setminus K \) is nonempty
    (having cardinality 
    \( \cardinal{L}-\cardinal{K}=l-\cardinal{K} \geq k+1-\cardinal{K} \geq 1 \)),
    and because, since the norm $\TripleNorm{\cdot}$ is orthant-strictly monotonic, 
    using Item~\ref{it:SICS} in Proposition~\ref{pr:orthant-strictly_monotonic},
    we obtain that \( \TripleNorm{\primal_{K}} < \TripleNorm{\primal_{K \cup \na{\LocalIndex}}} \) 
    as \( \primal_{K} \neq \primal_{K \cup \na{\LocalIndex}} \) for at least one
    \( \LocalIndex \in L\setminus K \) since  \( L=\Support{\primal} \)}
    & \leq
      \sup_{\cardinal{J} \leq k+1, J \subset L } 
      \TripleNorm{\primal_{J}}
      \tag{as all the subsets $K'=K \cup \na{\LocalIndex}$ are such that $K' \subset L $ and 
      \( \cardinal{K'}=k+1 \)}
    \\
    & \leq 
      \TopNorm{\TripleNorm{\primal}}{k+1}  
  \end{align*}
  by definition~\eqref{eq:top_norm} of the generalized top-$k+1$ norm
  (in fact the last inequality is easily shown to be an equality as
  \( \primal_L=\primal \)).
  Thus, for any \( k \in \ic{0,l-1} \), we have established that 
  \( \TopNorm{\TripleNorm{\primal}}{k} < \TopNorm{\TripleNorm{\primal}}{k+1} \). 
  \medskip

  This ends the proof.
\end{proof}

We show that, when the source norm is orthant-strictly monotonic,
it is equivalent either that 
the sequence of induced generalized top-$k$ norms be strictly increasingly graded.
or that the dual norm~$\TripleNormDual{\cdot}$ be orthant-strictly monotonic.

\begin{proposition}
  The following statements are equivalent.
  \begin{enumerate}
  \item 
    The dual norm $\TripleNormDual{\cdot}$ is orthant-strictly monotonic
    and the sequence 
    \( \bseqa{\TopNorm{\TripleNorm{\cdot}}{\LocalIndex}}{\LocalIndex\in\ic{1,d}} \)
    of generalized top-$k$ norms  in~\eqref{eq:top_norm} 
    is strictly increasingly graded with respect to the \lzeropseudonorm.
    \label{it:calL0_1}
  \item 
    Both the norm $\TripleNorm{\cdot}$ 
    and the dual norm $\TripleNormDual{\cdot}$
    are orthant-strictly monotonic.
    \label{it:calL0_2}
  \end{enumerate}
\end{proposition}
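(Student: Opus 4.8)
The plan is to establish the two implications separately, the reverse one being immediate. For \(\text{Item~\ref{it:calL0_2}} \Rightarrow \text{Item~\ref{it:calL0_1}}\), I would simply observe that, since \(\TripleNorm{\cdot}\) is orthant-strictly monotonic, the second bullet of Proposition~\ref{pr:increasingly_graded} applies and yields that the sequence \(\bseqa{\TopNorm{\TripleNorm{\cdot}}{\LocalIndex}}{\LocalIndex\in\ic{1,d}}\) of generalized top-\(k\) norms is strictly increasingly graded with respect to the \lzeropseudonorm; as \(\TripleNormDual{\cdot}\) is orthant-strictly monotonic by assumption, both requirements of Item~\ref{it:calL0_1} are then met.

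For the converse \(\text{Item~\ref{it:calL0_1}} \Rightarrow \text{Item~\ref{it:calL0_2}}\), the dual norm \(\TripleNormDual{\cdot}\) is orthant-strictly monotonic by hypothesis, so the whole task reduces to proving that the source norm \(\TripleNorm{\cdot}\) is orthant-strictly monotonic. I would proceed in two stages. First, I would extract orthant-monotonicity of the source from the dual hypothesis: \(\TripleNormDual{\cdot}\) orthant-strictly monotonic implies \(\TripleNormDual{\cdot}\) orthant-monotonic, and the equivalence between Item~\ref{it:orthant-monotonic_IN_pr:orthant-monotonic} and Item~\ref{it:dual_orthant-monotonic_IN_pr:orthant-monotonic} of Proposition~\ref{pr:orthant-monotonic} then gives that \(\TripleNorm{\cdot}\) is orthant-monotonic. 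This preliminary step is crucial: it unlocks the identity \(\TripleNorm{\cdot} = \TopNorm{\TripleNorm{\cdot}}{d}\) of Item~\ref{it:generalized_top-d_norm_equality} in Proposition~\ref{pr:source_norm_orthant-monotonic_generalized_top-k_norm}, which is what lets the gradedness hypothesis (phrased only in terms of the top-\(k\) norms) bear on \(\TripleNorm{\cdot}\) itself.

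Second, I would verify the characterization of Item~\ref{it:SICS} in Proposition~\ref{pr:orthant-strictly_monotonic}, which by its equivalence with Item~\ref{it:OSM} yields orthant-strict monotonicity. So let \(J \subsetneq K \subset \ic{1,d}\) and \(\primal \in \RR^d\) with \(\primal_J \neq \primal_K\); writing \(y = \primal_K\) I have \(\primal_J = y_J\) and \(y_J \neq y\), so \(y\) has a nonzero entry outside \(J\) and hence \(m' = \lzero\np{y_J} < \lzero\np{y} = m\). The case \(y_J = 0\) being trivial, I assume \(m' \geq 1\). Selecting the index set \(\Support{y_J}\), of cardinality \(m'\), in the supremum defining \(\TopNorm{\TripleNorm{y}}{m'}\) gives \(\TripleNorm{y_J} \leq \TopNorm{\TripleNorm{y}}{m'}\). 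Since the sequence is strictly increasingly graded and \(\lzero\np{y} = m > m'\), the stationarity characterization~\eqref{eq:strictly_increasingly_graded_b} forces \(\TopNorm{\TripleNorm{y}}{m'} < \TopNorm{\TripleNorm{y}}{d}\), while orthant-monotonicity gives \(\TopNorm{\TripleNorm{y}}{d} = \TripleNorm{y}\). Chaining these, \(\TripleNorm{\primal_J} = \TripleNorm{y_J} \leq \TopNorm{\TripleNorm{y}}{m'} < \TopNorm{\TripleNorm{y}}{d} = \TripleNorm{y} = \TripleNorm{\primal_K}\), which is exactly Item~\ref{it:SICS}; hence \(\TripleNorm{\cdot}\) is orthant-strictly monotonic and Item~\ref{it:calL0_2} holds.

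The hard part is this forward direction, and its subtlety lies in the fact that strict gradedness is a statement about the top-\(k\) norms alone and, on its own, says nothing about \(\TripleNorm{\cdot}\), which in general differs from \(\TopNorm{\TripleNorm{\cdot}}{d}\). The argument therefore genuinely needs the dual hypothesis, used first to collapse \(\TopNorm{\TripleNorm{\cdot}}{d}\) onto \(\TripleNorm{\cdot}\) via orthant-monotonicity; once that bridge is built, the strict jump of the graded sequence transfers directly into the strict inequality of Item~\ref{it:SICS}. I would also confirm that the orthant-strict monotonicity of \(\TripleNormDual{\cdot}\) is not redundant: it cannot be recovered for free, since orthant-strict monotonicity is not stable under duality, as the \(\ell_1\)/\(\ell_\infty\) pair already shows.
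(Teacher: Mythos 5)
Your proof is correct and follows essentially the same route as the paper's: the easy direction invokes Proposition~\ref{pr:increasingly_graded}, and the hard direction first transfers orthant-monotonicity from the dual norm to the source norm so that \( \TripleNorm{\cdot}=\TopNorm{\TripleNorm{\cdot}}{d} \), then verifies Item~\ref{it:SICS} of Proposition~\ref{pr:orthant-strictly_monotonic} by exploiting the strict gradedness at the index \( m'=\lzero\np{\primal_J} \). Your middle step is marginally more streamlined than the paper's --- you bound \( \TripleNorm{\primal_J}\leq\TopNorm{\TripleNorm{\primal_K}}{m'} \) directly from the supremum in~\eqref{eq:top_norm} by choosing the subset \( \Support{\primal_J} \), whereas the paper compares \( \TopNorm{\TripleNorm{\primal_J}}{k-1} \) with \( \TopNorm{\TripleNorm{\primal_K}}{k-1} \) via orthant-monotonicity of the generalized top-$k$ norms and applies the gradedness to \( \primal_J \) as well --- but the decomposition and the key ingredients are the same.
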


\begin{proof}

  \noindent $\bullet$ 
  Suppose that Item~\ref{it:calL0_1} is satisfied
  and let us show that Item~\ref{it:calL0_2} holds true.
  For this, it suffices to prove that the norm $\TripleNorm{\cdot}$
  is orthant-strictly monotonic.   
  To prove that the norm $\TripleNorm{\cdot}$
  is orthant-strictly monotonic, we will show that 
  Item~\ref{it:SICS} in Proposition~\ref{pr:orthant-strictly_monotonic}
  holds true for~$\TripleNorm{\cdot}$. 
  For this purpose, we consider \( \primal \in \RR^d \)
  and \( J \subsetneq K \subset\ic{1,d} \) such that 
  $ \primal_J \neq \primal_K $. 
  By definition of the \lzeropseudonorm\ in~\eqref{eq:pseudo_norm_l0}, 
  we have \( j=\lzero\np{\primal_J} < k=\lzero\np{\primal_K} \).

  On the one hand, as the dual norm $\TripleNormDual{\cdot}$ is orthant-strictly monotonic,
  it is orthant-monotonic, so that 
  the norm $\TripleNorm{\cdot}$ is also orthant-monotonic,
  as proved in~\cite[Theorem~ 2.23]{Gries:1967} 
  (equivalence between Item~\ref{it:orthant-monotonic_IN_pr:orthant-monotonic}
  and Item~\ref{it:dual_orthant-monotonic_IN_pr:orthant-monotonic} 
  in Proposition~\ref{pr:orthant-monotonic}).
  As a consequence, so are the norms in the sequence 
  \( \bseqa{\TopNorm{\TripleNorm{\cdot}}{\LocalIndex}}{\LocalIndex\in\ic{1,d}} \)
  by Item~\ref{it:generalized_top-ksupport_norm_orthant-monotonic}
  in Proposition~\ref{pr:source_norm_orthant-monotonic_generalized_top-k_norm}, 
  and we get that 
  \( \TopNorm{\TripleNorm{\primal_J}}{k-1} 
  \leq \TopNorm{\TripleNorm{\primal_K}}{k-1} \), in particular,
  by the equivalence between
  Item~\ref{it:orthant-monotonic_IN_pr:orthant-monotonic} and Item~\ref{it:ICS} in
  Proposition~\ref{pr:orthant-monotonic}.

  On the other hand, since, by assumption, the sequence 
  \( \bseqa{\TopNorm{\TripleNorm{\cdot}}{\LocalIndex}}{\LocalIndex\in\ic{1,d}} \)
  of generalized top-$k$ norms is strictly increasingly graded 
  with respect to the \lzeropseudonorm,
  we have by~\eqref{eq:strictly_increasingly_graded_a} that,
  on the one hand, 
  \( \TopNorm{\TripleNorm{\primal_J}}{1} \leq \cdots \leq 
  \TopNorm{\TripleNorm{\primal_J}}{j-1}  < 
  \TopNorm{\TripleNorm{\primal_J}}{j} =
  \cdots = \TopNorm{\TripleNorm{\primal_J}}{d} =\TripleNorm{\primal_J} \),
  because \( j=\lzero\np{\primal_J} \), 
  and, on the other hand, 
  \( \TopNorm{\TripleNorm{\primal}}{1} \leq \cdots \leq 
  \TopNorm{\TripleNorm{\primal_K}}{k-1}  < 
  \TopNorm{\TripleNorm{\primal_K}}{k} =
  \cdots = 
  \TopNorm{\TripleNorm{\primal_K}}{d} =\TripleNorm{\primal_K}\),
  because \( k=\lzero\np{\primal_K} \).
  Since $j < k$, we deduce that 
  \[
    \TripleNorm{\primal_J} 
    = \TopNorm{\TripleNorm{\primal_J}}{j} 
    = \TopNorm{\TripleNorm{\primal_J}}{k-1} 
    \leq \TopNorm{\TripleNorm{\primal_K}}{k-1} 
    < \TopNorm{\TripleNorm{\primal_K}}{k} 
    = \TripleNorm{\primal_K}
    \eqfinv
  \]
  and therefore that \( \TripleNorm{\primal_J} < \TripleNorm{\primal_K} \).
  Thus, Item~\ref{it:SICS} in Proposition~\ref{pr:orthant-strictly_monotonic}
  holds true for~$\TripleNorm{\cdot}$, so that the norm $\TripleNorm{\cdot}$
  is orthant-strictly monotonic.
  Hence, we have shown that Item~\ref{it:calL0_2} is satisfied.
  \medskip

  \noindent $\bullet$ 
  Suppose that Item~\ref{it:calL0_2} is satisfied
  and let us show that Item~\ref{it:calL0_1} holds true. 

  Since the norm $\TripleNorm{\cdot}$ is orthant-strictly monotonic,
  it has been proved in Proposition~\ref{pr:increasingly_graded}
  that the sequence \( \bseqa{\TopNorm{\TripleNorm{\cdot}}{\LocalIndex}}{\LocalIndex\in\ic{1,d}} \)
  is strictly increasingly graded with respect to the \lzeropseudonorm.
  Hence, Item~\ref{it:calL0_1} holds true.
  \medskip

  This ends the proof.
\end{proof}

\subsubsection{Sufficient conditions for decreasingly graded sequence of generalized $k$-support norms}
\label{Sufficient_conditions_for_decreasingly_graded_sequence_of_generalized_k-support_norms}

There is an asymetry in that 
the property of orthant-strict monotonicity for norms
does not proves especially relevant for the \lzeropseudonorm\
and sequences of $k$-support norms.
Indeed, consider the source norm~$\TripleNorm{\cdot}=\norm{\cdot}_{1}$,
that is, the $\ell_1$ norm which is orthant-strict monotonic.
By Table~\ref{tab:Examples} (third column), we know that the 
$k$-support norms are the norms \( \SupportNorm{\TripleNorm{\cdot}}{k} =
\LpSupportNorm{\cdot}{\infty}{k} =
\max \na{ \Norm{\cdot}_{1} / k , \Norm{\cdot}_{\infty} } \), for $k\in\ic{1,d}$.
Now, the nonincreasing sequence 
\( \bseqa{\SupportNorm{\TripleNorm{\cdot}}{\LocalIndex}}{\LocalIndex\in\ic{1,d}} \)
of norms is not strictly decreasingly graded with respect to the \lzeropseudonorm\
when $d \geq 2$.
Indeed, for any \( \varepsilon \in ]0,1[ \), 
the vector \( \dual=\bp{ \varepsilon/(d-1), \ldots, \varepsilon/(d-1), 1 } \)
is such that
\[
  \lzero\np{\dual}=d \text{ and } 
  \SupportNorm{\TripleNorm{\dual}}{1} > \SupportNorm{\TripleNorm{\dual}}{2} = \cdots
  = \SupportNorm{\TripleNorm{\dual}}{d} 
\]
because \( \SupportNorm{\TripleNorm{\dual}}{k} =
\max \ba{ \Norm{\dual}_{1}/k , \Norm{\dual}_{\infty} }
=  \max \ba{ (\varepsilon+1)/k , 1 }
\), for \( k\in\ic{1,d} \), so that
\( \varepsilon+1=\SupportNorm{\TripleNorm{\dual}}{1} > \SupportNorm{\TripleNorm{\dual}}{2} = \cdots
= \SupportNorm{\TripleNorm{\dual}}{d}=1 \).
However, we establish the following result.

\begin{proposition}
  \quad
  \begin{itemize}
  \item 
    If the source norm~$\TripleNorm{\cdot}$ is orthant-monotonic,
    then the nonincreasing sequence 
    \( \bseqa{\SupportNorm{\TripleNorm{\cdot}}{\LocalIndex}}{\LocalIndex\in\ic{1,d}} \)
    of generalized $k$-support norms in~\eqref{eq:support_norm} 
    is decreasingly graded with respect to the \lzeropseudonorm,
    that is,
    \begin{equation}
      \lzero\np{\dual} \leq l 
      \Rightarrow 
      \SupportNorm{\TripleNorm{\dual}}{l}=
      \SupportNorm{\TripleNorm{\dual}}{d}
      \eqsepv \forall \dual \in \RR^d 
      \eqsepv \forall l\in\ic{0,d} 
      \eqfinp
      \label{eq:OM_SN_DG}
    \end{equation}
  \item 
    If the source norm~$\TripleNorm{\cdot}$ is orthant-monotonic,
    and if the normed space 
    \( \bp{\RR^d,\TripleNormDual{\cdot}} \) is strictly convex,
    then the nonincreasing sequence 
    \( \bseqa{\SupportNorm{\TripleNorm{\cdot}}{\LocalIndex}}{\LocalIndex\in\ic{1,d}} \)
    of generalized $k$-support norms in~\eqref{eq:support_norm} 
    is strictly decreasingly graded with respect to the \lzeropseudonorm,
    that is,
    \begin{equation}
      \lzero\np{\dual} \leq l 
      \iff
      \SupportNorm{\TripleNorm{\dual}}{l}=
      \SupportNorm{\TripleNorm{\dual}}{d}
      \eqsepv \forall \dual \in \RR^d 
      \eqsepv \forall l\in\ic{0,d} 
      \eqfinp
      \label{eq:OM_rotund_SN_SDG}
    \end{equation}
  \end{itemize}
  \label{pr:decreasingly_graded_rotund}
\end{proposition}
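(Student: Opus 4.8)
The plan is to derive both bullets from two facts already established: the nonincreasing monotonicity of the sequence, i.e. the inequalities~\eqref{eq:generalized_k-support_norm_inequalities} giving \( \SupportNorm{\TripleNorm{\dual}}{d} \leq \SupportNorm{\TripleNorm{\dual}}{l} \), and the identity \( \SupportNorm{\TripleNorm{\cdot}}{d} = \TripleNormDual{\cdot} \) valid under orthant-monotonicity (Item~\ref{it:generalized_top-d_norm_equality} in Proposition~\ref{pr:source_norm_orthant-monotonic_generalized_top-k_norm}). Granting these, the first bullet reduces to the single inequality \( \SupportNorm{\TripleNorm{\dual}}{l} \leq \TripleNormDual{\dual} \) whenever \( \lzero\np{\dual} \leq l \): combining it with the two facts yields \( \TripleNormDual{\dual} = \SupportNorm{\TripleNorm{\dual}}{d} \leq \SupportNorm{\TripleNorm{\dual}}{l} \leq \TripleNormDual{\dual} \), hence equality throughout. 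The case \( \dual = 0 \) is trivial, so I would assume \( \dual \neq 0 \), whence \( l \geq 1 \) and all the support norms involved are defined.

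To establish \( \SupportNorm{\TripleNorm{\dual}}{l} \leq \TripleNormDual{\dual} \), I would expand the \( l \)-support norm as a dual norm via~\eqref{eq:dual_norm} and~\eqref{eq:support_norm}, writing \( \SupportNorm{\TripleNorm{\dual}}{l} = \sup_{\TopNorm{\TripleNorm{\primal}}{l} \leq 1} \proscal{\primal}{\dual} \). Setting \( L = \Support{\dual} \), so that \( \cardinal{L} = \lzero\np{\dual} \leq l \) and \( \dual = \dual_L \), the self-duality~\eqref{eq:orthogonal_projection_self-dual} of the projection gives \( \proscal{\primal}{\dual} = \proscal{\primal_L}{\dual} \). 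Then the norm/dual-norm inequality~\eqref{eq:norm_dual_norm_inequality} yields \( \proscal{\primal_L}{\dual} \leq \TripleNorm{\primal_L} \times \TripleNormDual{\dual} \), and, since \( \cardinal{L} \leq l \), the very definition~\eqref{eq:top_norm} of the top-\( l \) norm gives \( \TripleNorm{\primal_L} \leq \TopNorm{\TripleNorm{\primal}}{l} \leq 1 \). Chaining these, \( \proscal{\primal}{\dual} \leq \TripleNormDual{\dual} \) for every admissible \( \primal \), so the supremum \( \SupportNorm{\TripleNorm{\dual}}{l} \) is at most \( \TripleNormDual{\dual} \). This settles the first bullet.

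For the second bullet, the implication \( \lzero\np{\dual} \leq l \Rightarrow \SupportNorm{\TripleNorm{\dual}}{l} = \SupportNorm{\TripleNorm{\dual}}{d} \) is exactly the first bullet, so only the converse needs the extra strict convexity of \( \bp{\RR^d, \TripleNormDual{\cdot}} \). Assuming \( \SupportNorm{\TripleNorm{\dual}}{l} = \SupportNorm{\TripleNorm{\dual}}{d} \) with \( \dual \neq 0 \), I would normalize so that \( \TripleNormDual{\dual} = 1 \) (licit since the three quantities involved are positively homogeneous while \( \lzero \) is \( 0 \)-homogeneous). Orthant-monotonicity gives \( \SupportNorm{\TripleNorm{\dual}}{d} = \TripleNormDual{\dual} = 1 \), hence \( \SupportNorm{\TripleNorm{\dual}}{l} = 1 \), which means \( \dual \in \SupportNorm{\TripleNormBall}{l} \) and \( \dual \in \TripleNormDualSphere \), that is, \( \dual \in \SupportNorm{\TripleNormBall}{l} \cap \TripleNormDualSphere \). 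By Proposition~\ref{pr:level_set_pseudonormlzero_intersection_sphere_rotund}, under orthant-monotonicity and strict convexity of the dual, this intersection equals \( \LevelSet{\lzero}{l} \cap \TripleNormDualSphere \); therefore \( \dual \in \LevelSet{\lzero}{l} \), i.e. \( \lzero\np{\dual} \leq l \), as wanted.

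The bulk of the difficulty has in fact been absorbed into the two cited results. The only genuinely new steps are the elementary dual-norm estimate of the first bullet and, for the converse, the translation of the scalar equality \( \SupportNorm{\TripleNorm{\dual}}{l} = \TripleNormDual{\dual} \) into the membership \( \dual \in \SupportNorm{\TripleNormBall}{l} \cap \TripleNormDualSphere \), so that the rotundity-based Proposition~\ref{pr:level_set_pseudonormlzero_intersection_sphere_rotund} applies. I expect the main subtlety to be bookkeeping the normalization together with the nesting \( \SupportNorm{\TripleNormBall}{l} \subset \TripleNormDualBall \) (a restatement of \( \SupportNorm{\TripleNorm{\cdot}}{l} \geq \TripleNormDual{\cdot} \)) correctly, rather than any deep argument.
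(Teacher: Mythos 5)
Your proof is correct, and the first bullet is handled by a genuinely different (and more elementary) route than the paper's. The paper normalizes $\dual$ and shows that $\dual/\TripleNormDual{\dual}$ lies in $\LevelSet{\lzero}{l}\cap\TripleNormDualSphere\subset\closedconvexhull\bp{\bigcup_{\cardinal{K}\leq l}\np{\FlatRR_{K}\cap\TripleNormDualSphere}}=\SupportNorm{\TripleNormBall}{l}$, i.e.\ it routes the inequality $\SupportNorm{\TripleNorm{\dual}}{l}\leq\TripleNormDual{\dual}$ through the convex-hull representation~\eqref{eq:dual_support_norm_unit_ball} of the unit ball, which itself requires orthant-monotonicity. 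Your direct estimate --- bounding $\sup_{\TopNorm{\TripleNorm{\primal}}{l}\leq 1}\proscal{\primal}{\dual}$ by $\TripleNorm{\primal_L}\,\TripleNormDual{\dual}$ with $L=\Support{\dual}$ and using $\TripleNorm{\primal_L}\leq\TopNorm{\TripleNorm{\primal}}{l}\leq 1$ --- bypasses that representation entirely and in fact shows that $\lzero\np{\dual}\leq l\Rightarrow\SupportNorm{\TripleNorm{\dual}}{l}\leq\TripleNormDual{\dual}$ for an arbitrary source norm; orthant-monotonicity enters only through the endpoint identity $\SupportNorm{\TripleNorm{\cdot}}{d}=\TripleNormDual{\cdot}$ and the sandwich with~\eqref{eq:generalized_k-support_norm_inequalities}. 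That is a slightly sharper localization of where the hypothesis is used. For the second bullet you and the paper coincide in substance: both reduce the converse to the set identity of Proposition~\ref{pr:level_set_pseudonormlzero_intersection_sphere_rotund} applied to the normalized vector, the paper presenting it as a chain of equivalences and you isolating the reverse implication. Your handling of the $\dual=0$ and $l=0$ edge cases and of the $0$-homogeneity of $\lzero$ under normalization is also in order.
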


\begin{proof}
  A direct proof would use~\cite[Proposition~6]{Chancelier-DeLara:2022_CAPRA_OPTIMIZATION}
  with \( \TripleNormDual{\cdot} \) as source norm,
  and the property that \( \SupportNorm{\TripleNorm{\cdot}}{\LocalIndex} \)
  coincides with the coordinate-$k$ norm~\cite[Definition~3]{Chancelier-DeLara:2022_CAPRA_OPTIMIZATION}
  induced by~\( \TripleNormDual{\cdot} \) when the norm~$\TripleNorm{\cdot}$ is orthant-monotonic.
  We give a self-contained proof for the sake of completeness.
  \medskip

  \noindent$\bullet$
  We suppose that the source norm~$\TripleNorm{\cdot}$ is orthant-monotonic.

  For any \( \dual \in \RR^d \) and
  for any \( k \in \ic{1,d} \), we have\footnote{%
    In what follows, by ``or'', we mean the so-called \emph{exclusive or}
    (exclusive disjunction). Thus, every ``or'' should be understood as ``or
    $\dual\not=0$ and''.
    \label{ft:exclusive_or}}
  \begin{align*}
    \dual \in \LevelSet{\lzero}{k} 
    &\Leftrightarrow
      \dual=0 \text{ or }
      \frac{\dual}{\TripleNormDual{\dual}} \in \LevelSet{\lzero}{k}
      \tag{by 0-homogeneity~\eqref{eq:lzeropseudonorm_is_0-homogeneous}
      of the \lzeropseudonorm, and
      by definition~\eqref{eq:pseudonormlzero_level_set}
      of $\LevelSet{\lzero}{k}$ }
    \\
    &\Leftrightarrow 
      \dual=0 \text{ or }
      \frac{\dual}{\TripleNormDual{\dual}} \in 
      \LevelSet{\lzero}{k} \cap \TripleNormDualSphere 
      \tag{as \( \frac{\dual}{\TripleNormDual{\dual}} \in \TripleNormDualSphere \)}
    \\
    &\Leftrightarrow 
      \dual=0 \text{ or }
      \frac{\dual}{\TripleNormDual{\dual}} \in 
      \bigcup_{\cardinal{K} \leq k} \np{ \FlatRR_{K} \cap \TripleNormDualSphere
      }
      \tag{as \( \LevelSet{\lzero}{k} = \bigcup_{\cardinal{K} \leq k} \FlatRR_{K} \)
      by~\eqref{eq:level_set_pseudonormlzero} }
    \\
    &\Rightarrow
      \dual=0 \text{ or }
      \frac{\dual}{\TripleNormDual{\dual}} \in 
      \closedconvexhull\bp{ \bigcup_{ {\cardinal{K} \leq k}} 
      \np{ \FlatRR_{K} \cap \TripleNormDualSphere } }
      \tag{as \( S \subset \closedconvexhull\np{S} \) for any subset~$S$ of $\RR^d$}
    \\
    &\Rightarrow
      \dual=0 \text{ or }
      \frac{\dual}{\TripleNormDual{\dual}} \in 
      \SupportNorm{\TripleNormBall}{k} 
      \tag{as 
      \( \closedconvexhull\bp{ \bigcup_{ {\cardinal{K} \leq k}} \np{ \FlatRR_{K} \cap \TripleNormDualSphere } }
      = \SupportNorm{\TripleNormBall}{k} \) 
      by~\eqref{eq:dual_support_norm_unit_ball} because the source norm~$\TripleNorm{\cdot}$ is orthant-monotonic}
    \\
    &\Rightarrow
      \dual=0 \text{ or }
      \SupportNorm{{\TripleNorm{\frac{\dual}{\TripleNormDual{\dual}}}}}{k}
      \leq 1 
      \tag{by definition~\eqref{eq:generalized_k-support_norm_unit_ball}
      of the unit ball~\( \SupportNorm{\TripleNormBall}{k} \) } 
    \\
    &\Rightarrow
      \SupportNorm{\TripleNorm{\dual}}{k}
      \leq \TripleNormDual{\dual}
      =\SupportNorm{\TripleNorm{\dual}}{d} 
      \tag*{(where the last equality comes
      from Item~\ref{it:generalized_top-d_norm_equality}}
    \\
    \tag*{in Proposition~\ref{pr:source_norm_orthant-monotonic_generalized_top-k_norm}
    since the norm~$\TripleNorm{\cdot}$ is orthant-monotonic)}
    \\
    &\Rightarrow
      \SupportNorm{\TripleNorm{\dual}}{k}
      = \SupportNorm{\TripleNorm{\dual}}{d} 
      \tag{as \( \SupportNorm{\TripleNorm{\dual}}{k}
      \geq \SupportNorm{\TripleNorm{\dual}}{d} \) by~\eqref{eq:generalized_k-support_norm_inequalities} }
      \eqfinp
  \end{align*}
  Therefore, we have obtained~\eqref{eq:OM_SN_DG}. 
  As the sequence 
  \( \bseqa{\SupportNorm{\TripleNorm{\cdot}}{\LocalIndex}}{\LocalIndex\in\ic{1,d}} \)
  of generalized $k$-support norms is nonincreasing
  by~\eqref{eq:generalized_k-support_norm_inequalities}, 
  we conclude that it
  is decreasingly graded with respect to the \lzeropseudonorm\
  (see the comments after Definition~\ref{de:increasingly_graded}).
  \medskip

  \noindent$\bullet$
  We suppose that the source norm~$\TripleNorm{\cdot}$ is orthant-monotonic
  and that the normed space 
  \( \bp{\RR^d,\TripleNormDual{\cdot}} \) is strictly convex.

  For any \( \dual \in \RR^d \) and
  for any \( k \in \ic{1,d} \), we have\footnote{%
    See Footnote~\ref{ft:exclusive_or}.}
  \begin{align*}
    \dual \in \LevelSet{\lzero}{k} 
    &\Leftrightarrow 
      \dual=0 \text{ or }
      \frac{\dual}{\TripleNormDual{\dual}} \in \LevelSet{\lzero}{k}
      \tag{by 0-homogeneity~\eqref{eq:lzeropseudonorm_is_0-homogeneous}
      of the \lzeropseudonorm, and
      by definition~\eqref{eq:pseudonormlzero_level_set}
      of $\LevelSet{\lzero}{k}$ }
    \\
    &\Leftrightarrow 
      \dual=0 \text{ or }
      \frac{\dual}{\TripleNormDual{\dual}} \in 
      \LevelSet{\lzero}{k} \cap \TripleNormDualSphere 
      \tag{as \( \frac{\dual}{\TripleNormDual{\dual}} \in \TripleNormDualSphere \)}
    \\
    &\Leftrightarrow 
      \dual=0 \text{ or }
      \frac{\dual}{\TripleNormDual{\dual}} \in 
      \SupportNorm{\TripleNormBall}{k} \cap \TripleNormDualSphere 
      \intertext{by~\eqref{eq:level_set_l0_inter_sphere_b} 
      since the assumptions of
      Proposition~\ref{pr:level_set_pseudonormlzero_intersection_sphere_rotund}
      --- namely, the source norm~$\TripleNorm{\cdot}$ is orthant-monotonic
      and the normed space 
      \( \bp{\RR^d,\TripleNormDual{\cdot}} \) is strictly convex ---
      are satisfied }
    & \Leftrightarrow 
      \dual=0 \text{ or }
      \frac{\dual}{\TripleNormDual{\dual}} \in 
      \SupportNorm{\TripleNormBall}{k}
      \tag{as $\frac{\dual}{\TripleNormDual{\dual}} \in \TripleNormDualSphere$}
    \\
    &\Leftrightarrow 
      \dual=0 \text{ or }
      \SupportNorm{\TripleNorm{\frac{\dual}{\TripleNormDual{\dual}}}}{k}
      \leq 1 
      \tag{by definition~\eqref{eq:generalized_k-support_norm_unit_ball}
      of the unit ball~\( \SupportNorm{\TripleNormBall}{k} \) } 
    \\
    &\Leftrightarrow 
      \SupportNorm{\TripleNorm{\dual}}{k}
      \leq \TripleNormDual{\dual}
      =\SupportNorm{\TripleNorm{\dual}}{d} 
      \tag*{(where the last equality comes
      from Item~\ref{it:generalized_top-d_norm_equality}}
    \\
    \tag*{ in Proposition~\ref{pr:source_norm_orthant-monotonic_generalized_top-k_norm}
    since the norm~$\TripleNorm{\cdot}$ is orthant-monotonic)}
    \\
    &\Leftrightarrow 
      \SupportNorm{\TripleNorm{\dual}}{k}
      = \SupportNorm{\TripleNorm{\dual}}{d} 
      \tag{as \( \SupportNorm{\TripleNorm{\dual}}{k}
      \geq \SupportNorm{\TripleNorm{\dual}}{d} \) by~\eqref{eq:generalized_k-support_norm_inequalities} }
      \eqfinp
  \end{align*}
  Therefore, we have obtained~\eqref{eq:OM_rotund_SN_SDG}. 
  As the sequence 
  \( \bseqa{\SupportNorm{\TripleNorm{\cdot}}{\LocalIndex}}{\LocalIndex\in\ic{1,d}} \)
  of generalized $k$-support norms is nonincreasing
  by~\eqref{eq:generalized_k-support_norm_inequalities}, 
  we conclude that it
  is strictly decreasingly graded with respect to the \lzeropseudonorm\
  (see the comments after Definition~\ref{de:increasingly_graded}).
  \medskip

  This ends the proof.
\end{proof}

\subsubsection{Expressing the \lzeropseudonorm\ 
  by means of the difference between two norms} 
\label{Expressing_the_lzeropseudonorm_by_means_of_the_difference_between_two_norms}

Propositions~\ref{pr:increasingly_graded} and \ref{pr:decreasingly_graded_rotund}
open the way for so-called ``difference of convex'' (DC)
optimization methods~\cite{Tono-Takeda-Gotoh:2017} 
to achieve sparsity.

Indeed, if the source norm $\TripleNorm{\cdot}$ is orthant-strictly monotonic, 
the level sets of the \lzeropseudonorm\ in~\eqref{eq:pseudonormlzero_level_set}
can be expressed
by means of the difference between two norms (one being a generalized top-$k$
norm), as follows,
\begin{subequations}
  \begin{equation}
    \LevelSet{\lzero}{k} =   
    \defset{\primal \in \RR^d}{%
      \TripleNorm{\primal}=\TopNorm{\TripleNorm{\primal}}{k} }
    =   
    \defset{\primal \in \RR^d}{%
      \TripleNorm{\primal} \leq \TopNorm{\TripleNorm{\primal}}{k} }
    \eqsepv \forall k\in\ic{0,d} 
    \eqfinv
  \end{equation}
  and the \lzeropseudonorm\ has the expression
  (see~\eqref{eq:strictly_increasingly_graded_c})
  \begin{equation}
    \lzero\np{\primal} 
    = \min \Bset{k \in \ic{1,d} }%
    { \TopNorm{\TripleNorm{\primal}}{k} = \TripleNorm{\primal} }
    \eqsepv \forall \primal \in \RR^d
    \eqfinp
  \end{equation}
\end{subequations}
As the $\ell_p$-norm~$\norm{\cdot}_{p}$ 
and its dual norm are orthant-strictly monotonic for $p\in ]1,\infty[$, 
the formulas above hold true with the
\lptopnorm{p}{k}
\( \TopNorm{\TripleNorm{\cdot}}{k} =
\LpTopNorm{\cdot}{p}{k} \) (see second column of Table~\ref{tab:Examples}).
\medskip

If the source norm~$\TripleNorm{\cdot}$ is orthant-monotonic
and the normed space 
\( \bp{\RR^d,\TripleNormDual{\cdot}} \) is strictly convex,
the level sets of the \lzeropseudonorm\ in~\eqref{eq:pseudonormlzero_level_set}
can be expressed
by means of the difference between two norms (one being a generalized $k$-support norm), as follows,
\begin{subequations}
  \begin{equation}
    \LevelSet{\lzero}{k} =   
    \defset{\dual \in \RR^d}{%
      \SupportNorm{\TripleNorm{\dual}}{k}
      = \TripleNormDual{\dual} }
    =   
    \defset{\dual \in \RR^d}{%
      \SupportNorm{\TripleNorm{\dual}}{k}
      \leq \TripleNormDual{\dual} }
    \eqsepv \forall k\in\ic{0,d} 
    \eqfinv
  \end{equation} 
  and the \lzeropseudonorm\ has the expression
  (see~\eqref{eq:strictly_increasingly_graded_c})
  \begin{equation}
    \lzero\np{\dual} 
    = \min \Bset{k \in \ic{1,d} }%
    { \SupportNorm{\TripleNorm{\dual}}{k} = \TripleNormDual{\dual} }
    \eqsepv \forall \dual \in \RR^d
    \eqfinp
  \end{equation}
\end{subequations}
As the $\ell_p$-norm~$\norm{\cdot}_{p}$ is orthant-monotonic
and the normed space 
\( \bp{\RR^d,\norm{\cdot}_{q}} \) is strictly convex,
when $p\in ]1,\infty[$ and $1/p+1/q=1$, 
the formulas above hold true with the \lpsupportnorm{q}{k}
\( \SupportNorm{\TripleNorm{\cdot}}{k} =\LpSupportNorm{\dual}{q}{k} \)
for $q\in ]1,\infty[$ (see Table~\ref{tab:Examples}).

\section{Conclusion}

In sparse optimization problems, one looks for solution that have few
nonzero components, that is, sparsity is exactly measured by the \lzeropseudonorm.
However, the mathematical expression of the \lzeropseudonorm,
taking integer values, 
makes it difficult to handle it in optimization problems.
To overcome this difficulty, one can try to replace the embarrassing 
\lzeropseudonorm\ by nicer terms, like norms.
In this paper, we contribute to this program by 
bringing up three new concepts for norms, 
and show how they prove especially relevant for the \lzeropseudonorm.

First, we have introduced a new class of 
orthant-strictly monotonic norms, 
inspired from orthant-monotonic norms.
With such a norm, when one component of a vector moves away from zero,
the norm of the vector strictly grows. 
Thus, an orthant-strictly monotonic norm is sensitive to the support 
of a vector, like the \lzeropseudonorm.
We have provided different characterizations of orthant-strictly monotonic norms
(and added a new characterization of orthant-monotonic norms).
Second, we have extended already known concepts 
of top-$k$ and $k$-support norms to 
sequences of generalized top-$k$ and $k$-support norms,
generated from any source norm (and not only from the $\ell_p$~norms), and have studied their properties. 
Third, we have introduced the notion of sequences of norms
that are strictly increasingly graded with respect to the \lzeropseudonorm.
A graded sequence detects the number of nonzero components of a vector
when the sequence becomes stationary.

With these three notions, we have proved that, 
when the source norm 
is orthant-strictly monotonic,
the sequence of induced generalized top-$k$ norms is strictly increasingly graded.
We have also shown that, when the source norm 
is orthant-monotonic and 
that the normed space~$\RR^d$ is strictly convex when equipped with the dual norm,
the sequence of induced generalized $k$-support norms
is strictly decreasingly graded.

\begin{table}[htbp]
  \centering
  \begin{tabular}{||l||c|c||c||c|c||c|c||}
    \hline \hline
    & \multicolumn{3}{c||}{ \( \sequence{\TopNorm{\TripleNorm{\cdot}}{\LocalIndex}}{\LocalIndex\in\ic{1,d}} \) }  
    & \multicolumn{4}{c||}{ \( \sequence{\SupportNorm{\TripleNorm{\cdot}}{\LocalIndex}}{\LocalIndex\in\ic{1,d}} \) } 
    \\
    & \multicolumn{3}{c||}{ increasingly }
    & \multicolumn{4}{c||}{ decreasingly }
    \\
    & \multicolumn{2}{c||}{graded} & strictly graded &
                                                       \multicolumn{2}{c||}{graded} & \multicolumn{2}{c||}{strictly graded}
    \\ 
    \hline \hline
    \( \TripleNorm{\cdot} \) is orthant-monotonic &\checkmark &&&\checkmark&&\checkmark&
    \\
    \hline
    \( \TripleNorm{\cdot} \) is orthant-strictly monotonic &&& \checkmark &&&&
    \\
    \hline
    \( \TripleNormDual{\cdot} \) is orthant-monotonic&&\checkmark &&& \checkmark &&\checkmark
    \\
    \hline
    \( \np{\RR^d,\TripleNormDual{\cdot}} \) 
    is strictly convex &&&&&& \checkmark &\checkmark
    \\
    \hline \hline
  \end{tabular}
  \caption{Table of results. It reads by columns as follows:
    to obtain that \( \sequence{\TopNorm{\TripleNorm{\cdot}}{\LocalIndex}}{\LocalIndex\in\ic{1,d}} \) 
    is increasingly strictly graded (column~4), it suffices that 
    \( \TripleNorm{\cdot} \) be orthant-strictly monotonic
    (the only checkmark~\checkmark in column~4);
    to obtain that \( \sequence{\TopNorm{\TripleNorm{\cdot}}{\LocalIndex}}{\LocalIndex\in\ic{1,d}} \) 
    is increasingly graded (columns~2 and 3), it suffices that
    either \( \TripleNorm{\cdot} \) be orthant-monotonic
    (the only checkmark~\checkmark in column~2)
    or \( \TripleNormDual{\cdot} \) be orthant-monotonic
    (the only checkmark~\checkmark in column~3);
    to obtain that \( \sequence{\SupportNorm{\TripleNorm{\cdot}}{\LocalIndex}}{\LocalIndex\in\ic{1,d}} \) 
    is decreasingly strictly graded (columns~7 and 8), it suffices 
    either that \( \TripleNorm{\cdot} \) be orthant-monotonic
    and that \( \np{\RR^d,\TripleNormDual{\cdot}} \) be strictly convex
    (two checkmarks~\checkmark in column~7)
    or that \( \TripleNormDual{\cdot} \) be orthant-monotonic
    and that \( \np{\RR^d,\TripleNormDual{\cdot}} \) be strictly convex
    (two checkmarks~\checkmark in column~8)
    \label{tab:results}}
\end{table}

These results --- summarized in Table~\ref{tab:results} ---
open the way for so-called ``difference of convex'' (DC)
optimization methods to achieve sparsity.
Indeed, the level sets of the \lzeropseudonorm\ can be expressed
by means of the difference between norms, taken from an increasingly or
decreasingly graded sequence of norms. 
And we provide a way to generate such sequences from a class of source norms that encompasses
the $\ell_p$ norms (but for the extreme ones).

To complete the possible applications, we add that, in another paper
\cite{Chancelier-DeLara:2021_SVVA},
we show that, with orthant-strictly monotonic norms, we can define
conjugacies for which  the \lzeropseudonorm\ is equal to its biconjugate.

\bigskip

\textbf{Acknowledgements.}
We thank 
Jean-Baptiste Hiriart-Urruty
for his comments on first versions of this work. 

\newcommand{\noopsort}[1]{} \ifx\undefined\allcaps\def\allcaps#1{#1}\fi

\end{document}